\documentclass{amsart}

\newtheorem{theorem}{Theorem}[section]

\newtheorem{lemma}[theorem]{Lemma}

\usepackage[abbrev]{amsrefs}
\usepackage{hyperref}

\usepackage{amsmath}
\usepackage{amssymb}

\usepackage{enumerate}

\newtheorem{proposition}[theorem]{Proposition}

\theoremstyle{definition}
\newtheorem{definition}[theorem]{Definition}

\newtheorem*{theorem-A}{Theorem A}
\newtheorem*{theorem-B}{Theorem B}

\theoremstyle{remark}
\newtheorem{remark}[theorem]{Remark}

\numberwithin{equation}{section}

\newcommand{\R}{\mathbb{R}}

\newcommand{\N}{\mathbb{N}}

\newcommand{\rn}{\R^n}



\newcommand{\M}{\mathfrak{M}}

\newcommand{\intdif}[1]{\, \mathrm{d}{#1}}
\newcommand{\dt}{\intdif{t}}
\newcommand{\ds}{\intdif{s}}
\newcommand{\dtau}{\intdif{\tau}}

\newcommand{\dy}{\intdif{y}}

\newcommand{\pone}{q}
\newcommand{\ptwo }{r}

\DeclareMathOperator{\spt}{supp}

\newcommand{\RR}{\mathcal{R}}
\newcommand{\cS}{\mathcal{S}}

\begin{document}

\title{Potential trace inequalities via a Calder\'on-type theorem}

\begin{abstract}
In this paper we develop a general theoretical tool for the establishment of the boundedness of notoriously difficult operators (such as potentials) on certain specific types of rearrangement-invariant function spaces from analogous properties of operators that are easier to handle (such as fractional maximal operators).  A principal example of the new results one obtains by our analysis is the following inequality, which generalizes a result of Korobkov and Kristensen (who had treated the case $\mu=\mathcal{L}^n$, the Lebesgue measure on $\mathbb{R}^n$):  There exists a constant $C>0$ such that
\begin{align*}
\int_{\mathbb{R}^n} |I_\alpha^\mu f|^p \intdif{\nu} \leq C \|f\|_{L^{p,1}(\mathbb{R}^n,\mu)}^p
\end{align*}
for all $f$ in the Lorentz space $L^{p,1}(\mathbb{R}^n,\mu)$, where $\mu, \nu$ are Radon measures such that
\begin{equation*}
\sup_{Q} \frac{\mu(Q)}{l(Q)^{d}} < \infty \quad \text{and} \quad        \sup_{\mu(Q)>0} \frac{\nu(Q)}{\quad\mu(Q)^{1-\frac{\alpha p}{d}}} < \infty,
\end{equation*}
 and $I_\alpha^\mu$ is the Riesz potential defined with respect to $\mu$ of order $\alpha \in (0,d)$.  More broadly, we obtain inequalities in this spirit in the context of rearrangement-invariant spaces through a result of independent interest, an extension of an interpolation theorem of Calder\'on where the target space in one endpoint is a space of bounded functions.        
 \end{abstract}

\subjclass[2020]{Primary 46E30, 42B25, 46B70, 47B38, 47G10, 31B15, 54C40, 14E20; Secondary 42B35, 31B10, 20C20}

\keywords{Trace inequality, Hausdorff content, interpolation of operators, Calder\'on theorem, rearrangement-invariant spaces, Lorentz spaces, potentials, fractional maximal function}

\author[Zden\v{e}k Mihula]{Zden\v{e}k Mihula}
\address[Zden\v{e}k Mihula]{
Department of Mathematics,
Faculty of Electrical Engineering,
Czech Technical University in Prague,
Technick\'a~2,
166~27 Praha~6,
Czech Republic}
\email{mihulzde@fel.cvut.cz}
\urladdr{0000-0001-6962-7635}

\author[Lubo\v{s} Pick]{Lubo\v{s} Pick}
\address[Lubo\v{s} Pick]{
Department of Mathematical Analysis,
Faculty of Mathematics and Physics,
Charles University,
So\-ko\-lo\-vsk\'a~83,
186~75 Praha~8,
Czech Republic}
\email{pick@karlin.mff.cuni.cz}
\urladdr{0000-0002-3584-1454}

\author[Daniel Spector]{Daniel Spector}
\address[Daniel Spector]{
Department of Mathematics, National Taiwan Normal University, No. 88, Section 4, Tingzhou Road, Wenshan District, Taipei City, Taiwan 116, R.O.C.\\
	\newline
	and
	\newline
	National Center for Theoretical Sciences\\No. 1 Sec. 4 Roosevelt Rd., National Taiwan
	University\\Taipei, 106, Taiwan
	\newline
	and
	\newline
	Department of Mathematics, University of Pittsburgh, Pittsburgh, PA 15261 USA
}

\email{spectda@gapps.ntnu.edu.tw}

\maketitle

\section{Introduction and statement of main results}
Let $\alpha \in (0,n)$ and $1<p<\frac{n}{\alpha}$.  A classical result pioneered by V.~Maz'ya \cites{Mazya1, Mazya2}, extended by D.R.~Adams \cite{Adams:1974}, demonstrated for the full range of parameters by B.J.~Dahlberg \cite{Dahlberg}, whose proof was simplified by K.~Hansson \cite{Hansson}, and has been codified in the literature as \cite[Theorem 7.1.1]{AH} asserts the existence of a constant $C_1=C_1(p,\alpha,n)>0$ such that
\begin{align}\label{capacitary_inequality}
\int_{\mathbb{R}^n} |I_\alpha f|^p \;d{\rm cap}_{\alpha,p} \leq C_1 \|f\|_{L^{p}(\mathbb{R}^n)}^p
\end{align}
for all $f \in L^{p}(\mathbb{R}^n)$.  Here we denote by
\begin{align*}
\text{cap}_{\alpha,p}(E)=\inf\{\|f\|_{L^{p}(\mathbb{R}^{n})}^{p}: f \in \mathcal{S}(\mathbb{R}^n), \quad I_{\alpha}f \geq 1~\text{on}~E\}
\end{align*}
the Riesz capacity, where $I_\alpha f = I_\alpha \ast f$ for 
\begin{align*}
I_{\alpha}(x)=\frac{1}{\gamma(\alpha)} \frac{1}{|x|^{n-\alpha}}, \quad x\in\mathbb{R}^{n},
\end{align*}
 the Riesz kernels, cf.~\cite[p.~117]{Stein}, and the integral on the left-hand-side of \eqref{capacitary_inequality} is intended in the sense of Choquet, i.e.
\begin{align*}
\int_{\mathbb{R}^n} |I_\alpha f|^p \,d{\rm cap}_{\alpha,p} = \int_0^\infty {\rm cap}_{\alpha,p}(\{|I_\alpha f|^p>t\})\dt,
\end{align*}
which can be interpreted as either an improper Riemann integral or Lebesgue integral of the monotone function 
\begin{align*}
t \mapsto {\rm cap}_{\alpha,p}(\{|I_\alpha f|^p>t\}).
\end{align*}

The capacitary or trace inequality \eqref{capacitary_inequality} is a strong form of the Hardy-Littlewood-Sobolev theorem on fractional integration.  For example, when one takes it in conjunction with the isocapacitary inequality (see, e.g. \cite[p.~120]{Stein}),
\begin{align*}
|E|^{1-\alpha p/n} \leq C_2 {\rm cap}_{\alpha,p}(E),
\end{align*}
one deduces the sharp Lorentz version of the Hardy-Littlewood-Sobolev theorem (see also R.~O'Neil \cite[Theorem 2.6 on p.~137]{Oneil}):
\begin{align}\label{lorentz}
\|I_\alpha f\|_{L^{q,p}(\mathbb{R}^n)}^p = q\int_0^\infty |\{|I_\alpha f|>t\}|^{1-\alpha p/n}\;t^{p-1}dt \leq C_3 \|f\|_{L^{p}(\mathbb{R}^n)}^p,
\end{align}
for $q=np/(n-\alpha p)$ and where $L^{q,p}(\mathbb{R}^n)$ is the Lorentz space of functions whose norm given by the left-hand-side of \eqref{lorentz} is finite. Another consequence of \eqref{capacitary_inequality} of equal interest is that it implies, for $f \in L^p(\mathbb{R}^n)$, the existence of Lebesgue points of $I_\alpha f$ not just Lebesgue almost everywhere but up to a set $E$ with ${\rm cap}_{\alpha,p}(E)=0$.  This means that potentials of $L^p(\mathbb{R}^n)$ functions admit Lebesgue points $\mathcal{H}^{n-\alpha p+\epsilon}$ almost everywhere for every $\epsilon \in (0, \alpha p)$.  This last fact follows from a version of \eqref{capacitary_inequality} with the Hardy-Littlewood maximal function on the left-hand-side, the local equivalence of Bessel and Riesz capacities proved in \cite[Proposition 5.1.4 on p.~131]{AH}, and the choice of $h(r)=r^{n-\alpha p +\epsilon}$ in \cite[Theorem 5.1.13 on p.~137]{AH}.  

The inequality \eqref{capacitary_inequality} is slightly weaker than the analogous statement for $p=1$, that one has the bound
\begin{align}\label{adams_inequality}
\int_{\mathbb{R}^n} |I_\alpha f| \; d\mathcal{H}^{n-\alpha}_\infty \leq C_4\|f\|_{\mathcal{H}^1(\mathbb{R}^n)}
\end{align}
for all $f \in \mathcal{H}^1(\mathbb{R}^n)$, the real Hardy space, see e.g. \cite[Proposition 5 on p.~121]{AdamsChoquet}.  Here 
\begin{align*}
\mathcal{H}^{n-\alpha}_\infty(E) = \inf\left \{ \sum_{i=0}^\infty \omega_{n-\alpha} r_i^{n-\alpha} : E \subseteq \bigcup_{i=0}^\infty B(x_i,r_i)\right\}
\end{align*}
denotes the Hausdorff content (see~\cites{COS,CS, Eri:24,Esm:22,Har:23,Har:23b,Her:24,MS, PS1, PS2, PS3, RSS,S, S1} for related results), $\omega_{n-\alpha}= \pi^{(n-\alpha)/2}/\Gamma((n-\alpha)/2+1)$ is a normalization constant, and again the integral is intended in the sense of Choquet.  One says the inequality \eqref{capacitary_inequality} is weaker because while for $1 \leq p < n/\alpha$ the behavior of the capacity on balls and a covering argument easily give
\begin{align}\label{capacity-content}
{\rm cap}_{\alpha,p}(E) \lesssim \mathcal{H}^{n-\alpha p}_\infty(E),
\end{align}
the reverse implication fails unless $p=1$.  Here we use $\text{cap}_{\alpha,1}$ to denote an analogue of the capacity $\text{cap}_{\alpha,p}$ for $p=1$ defined by
\begin{align*}
\text{cap}_{\alpha,1}(E)=\inf\{\|f\|_{\mathcal{H}^1(\rn)}: f \in \mathcal{S}_0(\mathbb{R}^n), \quad I_{\alpha}f \geq 1~\text{on}~E\},
\end{align*}
where $\mathcal{S}_0(\mathbb{R}^n)$ is the subset of $\mathcal{S}(\mathbb{R}^n)$ consisting of Schwartz functions with zero mean value. This failure of the reverse implication can be seen by the Cantor set construction and Theorem 5.3.2 in \cite[p.~142-143]{AH} (which combined with \cite[Proposition 2.3.7]{AH} shows the failure of the analogue of \eqref{adams_inequality} for $p>1$), while the validity at the endpoint is itself a consequence of \eqref{adams_inequality}.  One notes from this strengthening of \eqref{capacitary_inequality} that potentials of functions in the real Hardy space admit Lebesgue points up to a set of $\mathcal{H}^{n-\alpha}$ measure zero, with no loss of $\epsilon>0$.  

The consideration of \eqref{capacitary_inequality} and \eqref{adams_inequality} and their discrepancies might prompt one to wonder whether with stronger assumptions it is possible to prove a complete analogue of the latter in the regime $p>1$\textemdash a trace inequality with respect to the appropriately scaling Hausdorff content.  An answer to this question was given in the remarkable paper of M.~Korobkov and J.~Kristensen \cite{Kristensen-Korobkov}, who proved that for functions in the Lorentz space $L^{p,1}(\mathbb{R}^n)$ one can recover such a trace inequality.  In particular, from their paper on Luzin N- and Morse-Sard properties for a borderline case \cite[Theorem 1.2]{Kristensen-Korobkov} one has
\begin{theorem-A}[Korobkov-Kristensen]
Let $\alpha \in (0,n)$ and $1<p<n/\alpha$.  There exists a constant $C_5=C_5(\alpha,p,n)>0$ such that
\begin{align}\label{kristensen-korobkov}
\int_{\mathbb{R}^n} |I_\alpha f|^p \; d\mathcal{H}^{n-\alpha p}_\infty \leq C_5\|f\|_{L^{p,1}(\mathbb{R}^n)}^p
\end{align}
for all $f \in L^{p,1}(\mathbb{R}^n)$.
\end{theorem-A}
\noindent
Here we recall that the Lorentz space $L^{p,1}(\mathbb{R}^n)$ is the set of measurable $f$ such that the norm
\begin{align*}
\|f\|_{L^{p,1}(\mathbb{R}^n)}= p \int_0^\infty |\{ |f|>t\}|^{1/p}\;dt
\end{align*}
is finite. Note that a different definition of Lorentz (quasi)norms is used in the rest of the paper (see Section~\ref{sec:prel}), but it coincides with the one used here (see~\cite[Proposition 1.4.9 on p.~53]{Grafakos}, for example).

\begin{remark}\label{rem:KK-duality-explained}
While \cite[Theorem 1.2]{Kristensen-Korobkov} asserts an estimate with respect to measures in a Morrey space, a duality argument shows that their formulation is equivalent to that in Theorem~A. For the reader's convenience, we briefly explain the duality argument here. The density of $C_c(\mathbb{R}^n)$ in $L^{p,1}(\mathbb{R}^n)$ yields $\mathcal{H}^{n-\alpha p}_\infty$-quasicontinuity of the potential $I_\alpha f$ and so also of $|I_\alpha f|^p$. By the duality \cite[Proposition~1 on p.~118]{AdamsChoquet} (see also \cite{ST:22}) between $L^1(\mathcal{H}^{n-\alpha p}_\infty)$ and the Morrey space $L^{1, n-\alpha p}$, consisting of those (signed) Radon measures $\mu$ on $\rn$ for which
\begin{equation*}
\|\hspace{-0.09em}| \mu \|\hspace{-0.09em}| = \sup_{Q} \frac{\mu(Q)}{l(Q)^{n-\alpha p}}  < \infty,
\end{equation*}
using the Hahn-Banach theorem one obtains that
\begin{equation}\label{rem:KK-duality-explained:eq}
\int_{\mathbb{R}^n} |I_\alpha f|^p \; d\mathcal{H}^{n-\alpha p}_\infty \approx \sup_{\|\hspace{-0.09em}| \mu \|\hspace{-0.09em}|\leq1}\int_{\rn} |I_\alpha f|^p \intdif{|\mu|}
\end{equation}
for every $f\in L^{p,1}(\mathbb{R}^n)$. Here $Q\subseteq \rn$ is a cube and $l(Q)$ denotes the length of $Q$. One can replace the $\approx$ with equality in this statement about Banach space duality by utilizing a norm on $L^1(\mathcal{H}^{n-\alpha p}_\infty)$ in place of the above quasi-norm, for example with a Choquet integral involving the corresponding dyadic Hausdorff content. Finally, while Theorem~A asserts that the left-hand side of~\eqref{rem:KK-duality-explained:eq} is bounded from above by a constant multiple of $\|f\|_{L^{p,1}(\mathbb{R}^n)}^p$, \cite[Theorem 1.2]{Kristensen-Korobkov} asserts the same for the right-hand side of \eqref{rem:KK-duality-explained:eq}. Therefore, the formulations are indeed equivalent.
\end{remark}

The proof of Korobkov and Kristensen is in two steps. First, they utilize a fundamental property of the space $L^{p,1}(\mathbb{R}^n)$, that to establish \eqref{kristensen-korobkov} it suffices to demonstrate the inequality for characteristic functions of sets of finite measure (see, e.g. \cite[Theorem 3.13 on p.~195]{SteinWeiss}). Second, they prove a series of  lemmas \cite[Lemmas 3.1-3.7]{Kristensen-Korobkov} which establishes such an inequality by elementary arguments, namely the following theorem.
\begin{theorem-B}[Korobkov-Kristensen]
Let $\alpha \in (0,n)$ and $1<p<n/\alpha$.  There exists a constant $C_6=C_6(\alpha,p,n)>0$ such that
\begin{align}\label{kristensen-korobkov-prime}
\int_{\mathbb{R}^n} |I_\alpha \chi_E|^p \; d\mathcal{H}^{n-\alpha p}_\infty \leq C_6|E|
\end{align}
for all measurable $E \subseteq \mathbb{R}^n$ such that $|E|<+\infty$.
\end{theorem-B}

The starting point of this paper is an observation concerning the connection of the result of Korobkov and Kristensen and a classical result of E. Sawyer \cites{Sawyer1,Sawyer2}, combined with the same duality principle as in Remark~\ref{rem:KK-duality-explained}. To this end, we recall that in his papers on one and two weight estimates, Sawyer proved (see also~\cite[pp.~28--29]{Ada:98}) the following:  For $\beta \in (0,n)$ and $1<q<\frac{n}{\beta}$ there exists a constant $C_7=C_7(\beta,q,n)>0$ such that
\begin{align}\label{adams_fractional}
\int_{\mathbb{R}^n} \left(\mathcal{M}_\beta f\right)^q\;d\mathcal{H}^{n-\beta q}_\infty \leq C_7\|f\|_{L^q(\mathbb{R}^n)}^q
\end{align}
for all $f \in L^q(\mathbb{R}^n)$, where 
\begin{align*}
\mathcal{M}_\beta(f)(x) = \sup_{r>0} \frac{1}{\omega_{n-\beta} r^{n-\beta}} \int_{B(x,r)} |f(y)|\dy.
\end{align*}
For $\beta=\alpha$ and $q=p$, the inequality \eqref{capacity-content} shows that the estimate for the fractional maximal function in \eqref{adams_fractional} is better than that for the Riesz potential of the same order in \eqref{capacitary_inequality}, while, as discussed in the preceding, there is no hope to control the integral of the Riesz potential with respect to the Hausdorff content of this order.  

Yet we observe that there is a classical inequality for Riesz potentials which allows one to find room in the inequality because of the stronger hypothesis in Theorem~A.  In particular, \cite[Proposition 3.1.2(c) on p.~54]{AH} asserts that for $\beta \in (0,n)$ and $\theta \in (0,1)$, there exists a constant $C_8=C_8(\beta,\theta,n)>0$ such that 
\begin{align}\label{interpolation-Riesz}
|I_{\theta \beta} f(x) | \leq C_8 \mathcal{M}_\beta f(x)^\theta \mathcal{M} f(x)^{1-\theta}.
\end{align}
Thus, for any $\alpha \in (0,n)$ and $1<p<\frac{n}{\alpha}$, one may choose $\beta\in (\alpha,n)$ such that $q=p \frac{\alpha}{\beta}>1$.  Then, for $f=\chi_E$ and $\theta=\alpha/\beta$, \eqref{interpolation-Riesz} yields
\begin{align*}
|I_\alpha \chi_E(x)| \leq  C_8 \mathcal{M}_\beta \chi_E(x)^{\alpha/\beta}.
\end{align*}
As a consequence of this inequality and the relation $\alpha p = \beta q$, one deduces
\begin{align*}
\int_{\mathbb{R}^n} |I_\alpha \chi_E|^p \; d\mathcal{H}^{n-\alpha p}_\infty &\leq C_8^p \int_{\mathbb{R}^n} |\mathcal{M}_\beta \chi_E|^q \; d\mathcal{H}^{n-\beta q}_\infty \\
&\leq C_7 C_8^p |E|,
\end{align*}
which is the inequality \eqref{kristensen-korobkov-prime}.  Thus one finds a short proof of Theorem B, and therefore Theorem A, on the basis of the powerful inequality \eqref{adams_fractional}.  

This approach seems to have gone unnoticed until now, despite the great interest in trace inequalities, and is a key idea for the new results we establish in this paper.  In particular, in this work we develop a framework for this principle, when a bound for a good operator like the fractional maximal operator can easily be translated into a bound for a (comparatively) bad operator like the Riesz potential.  To demonstrate the new theory we develop with a concrete example, we record here the following generalization of Korobkov and Kristensen's result.

\begin{theorem}\label{mainresult-prime}
Let $0<d \leq n$, $\alpha \in (0,d)$, and $1<p<\frac{d}{\alpha}$.  There exists a constant $C_9=C_9(\alpha,p,d,n)>0$ such that
\begin{align}\label{kristensen-korobkov-two-measures}
\int_{\mathbb{R}^n} |I_\alpha^\mu f|^p \intdif{\nu} \leq C_9\|f\|_{L^{p,1}(\mathbb{R}^n,\mu)}^p
\end{align}
for all $f \in L^{p,1}(\mathbb{R}^n, \mu)$ and for all Radon measures $\mu, \nu$ which satisfy
\begin{equation}\label{E:mainresult-prime:d_upper_ahlfors}
\sup_{Q} \frac{\mu(Q)}{l(Q)^{d}} < \infty
\end{equation}
and
   \begin{equation}\label{nu_bound}
       \sup_{\mu(Q)>0} \frac{\nu(Q)}{\mu(Q)^{1-\frac{\alpha p}{d}}} < \infty.
    \end{equation}
Here the Riesz potential $I_\alpha^\mu$ of order $\alpha \in (0,d)$ with respect to $\mu$ is defined by the formula
\begin{align}\label{generalized_potential_def}
I_\alpha^\mu f(x) = \int_{\mathbb{R}^n} \frac{f(y)}{|x-y|^{d-\alpha}}\intdif{\mu(y)},\ x\in\rn.
\end{align}
\end{theorem}
\noindent
When $\mu$ is the Lebesgue measure on $\rn$ (and so $d = n$), this recovers the Korobkov--Kristensen trace inequality~\eqref{kristensen-korobkov} by a different argument. In particular, our Theorem \ref{mainresult-prime} and the argument presented in Remark  \ref{rem:KK-duality-explained} yield the assertions of Theorems~A/B as special cases of our results.  In this case, and in general with \eqref{E:mainresult-prime:d_upper_ahlfors} as given, the assumption \eqref{nu_bound} is necessary, which can be seen by taking $f=\chi_Q$.  However, the combination of \eqref{E:mainresult-prime:d_upper_ahlfors} and \eqref{nu_bound} need not be necessary and may be relaxed to the imposition of the testing condition of E. Sawyer in~\cite{Sawyer1} which our work builds upon.

Theorem \ref{mainresult-prime} provides a specific example of the new results which follow from our work, though our results hold in the broader context of rearrangement-invariant function spaces (see Theorem \ref{thm:most_general_KK}).  This preempts the question of bounds for bad operators with that of necessary and sufficient conditions for bounds for good operators on these spaces. This general context of rearrangement-invariant function spaces provides a unifying theory for function spaces such as Lebesgue spaces, Lorentz spaces, or Orlicz spaces, to name a few. 

The classical Calder\'on-type theorem (see \cite{BR}, \cite[Theorem 5.7 on p.~144]{BS}) asserts that restricted weak-type boundedness of linear operators is equivalent to the same boundedness for quasi-linear operators and they are equivalent to the boundedness of the Calder\'on operator. However, for the example of the fractional maximal function, the bounds are not of restricted weak type. Instead, one has the pair of estimates
\begin{align}
    \mathcal{M}_{\alpha}&\colon L^{p}(\rn, dx) \to L^{p}(\rn, \nu) \label{intro:frac_max_p_endpoint},
            \\
     \mathcal{M}_{\alpha}&\colon L^{\frac{n}{\alpha},\infty}(\rn, dx) \to L^{\infty}(\rn, \nu), \label{intro:frac_max_to_linfty_endpoint}
  \end{align}
  where $\nu$ is a Radon measure on $\rn$ that satisfies
  \begin{equation*}
     \sup_{Q} \frac{\nu(Q)}{l(Q)^{n - \alpha p}} < \infty.
  \end{equation*}
Recall that the corresponding restricted weak-type estimates for a quasi-linear operator $T$ read as:
\begin{align*}
&T\colon L^{p,1}(\rn, dx) \to L^{p,\infty}(\rn, \nu), \\
&T\colon L^{\frac{n}{\alpha}, 1}(\rn, dx) \to L^\infty(\rn, \nu).
\end{align*}
When $T=\mathcal{M}_\alpha$, the conclusion of the classical Calder\'{o}n theorem is not optimal, because it does not fully exploit the endpoint boundedness properties \eqref{intro:frac_max_p_endpoint} and \eqref{intro:frac_max_to_linfty_endpoint}, which are better than of restricted weak-type.
  As we will see, this results in significant differences in the theory we develop from the classical one. Several other types of various nonstandard versions of Calder\'on's theorem can be found in literature, see e.g.~\cites{Bae:22,GP-Indiana:09,Mal:12}, but none of the known ones can be used for our purposes.

We therefore next introduce a class of operators inspired by the results on fractional maximal operators mentioned above for which we will establish such a Calder\'on-type theorem. The symbol $\M$ denotes the class of measurable functions on a given measure space, and $\M_0$ denotes those that are finite almost everywhere.

\begin{definition}
Let $(\RR,\mu)$ and $(\cS, \nu)$ be nonatomic $\sigma$-finite measure spaces. 
Let $p,\pone$ be such that 
\begin{equation}\label{E:pq}
    1<p<\pone.
\end{equation}
We say that an operator $T$ defined on $(L^{p} + L^{\pone , \infty})(\RR, \mu)$ and taking values in $\M_0(\cS, \nu)$ is \emph{$(p,\pone)$-sawyerable if $T$  is a quasi-linear and}  satisfies:
\begin{align}
    T&\colon L^{p}(\RR, \mu) \to L^{p}(\cS, \nu) \label{saw_op_p_p_endpoint}\\
    \intertext{and}
     T&\colon L^{\pone ,\infty}(\RR, \mu) \to L^{\infty}(\cS, \nu). \label{saw_op_p1_infty_endpoint}
\end{align}
\end{definition}

Recall that an operator $T$ defined on a linear space $X\subseteq \M_0(\RR,\mu)$ and taking values in $\M_0(\cS,\nu)$ is \emph{quasi-linear} if there is a constant $k\geq1$ such that
\begin{equation*}
    |T(f + g)|\leq k\big( |Tf| + |Tg|  \big) \quad \text{and} \quad |T(\alpha f)| = |\alpha| |Tf| \quad \text{$\nu$-a.e.~in $\cS$}
\end{equation*}
for every $f,g\in X$ and every scalar $\alpha$. 
We say that $T$ is \emph{sublinear} if it is quasi-linear with $k = 1$.

We shall now point out that sawyerable operators can be effectively characterized by a special governing operator acting on (nonincreasing) functions of a single variable.

Let $p,\pone$ satisfy~\eqref{E:pq}, and let $\ptwo$ be defined by
\begin{equation}
\label{def:sawyerable_op:def_of_p}
    \ptwo = \frac{\pone }{\pone-p}.
\end{equation}
We then define the operator~$R_{p,\pone}$ by
\begin{equation}
\label{def:sawyerable_op:def_of_R}
     R_{p,\pone}g(t) = \Big( \frac1{t} \int_0^{t^{\ptwo }} g^*(s)^p \ds \Big)^\frac1{p},\ t\in (0, \infty),\ g\in\M(0, \infty),
\end{equation}
where $g^*$ is the nonincreasing rearrangement of $g$. The operator $R_{p,\pone}$ is intimately connected with the $K$-inequality suitable for the pair of estimates~\eqref{intro:frac_max_p_endpoint}--\eqref{intro:frac_max_to_linfty_endpoint}, and its origin will be apparent from Proposition~\ref{prop:joint_sawyerable_vs_sep_sawyerable} below. Its importance stems from the following two theorems, the first one being in the spirit of classical theorems of Calder\'on. An additional principal novelty is the appearance of the space $Y^{\langle p \rangle}(\cS,\nu)$, governed by the functional $\|g\|_{Y^{\langle p \rangle}}=\|(|g|^p)^{**}(t)^{\frac1p}\|_Y$, where $Y(\cS,\nu)$ is a rearrangement-invariant space. While the precise definitions are postponed to Section~\ref{sec:prel}, the two abstract theorems are followed by illustrating examples.

\begin{theorem}\label{thm:calderon_thm_for_sawyerable_op}
Let $(\RR,\mu)$ and $(\cS, \nu)$ be nonatomic $\sigma$-finite measure spaces. Let $p,\pone$ satisfy~\eqref{E:pq}.  Then, for every couple $X(\RR,\mu)$ and $Y(\cS, \nu)$ of rearrangement-invariant function spaces, where $X(\RR, \mu)\subseteq (L^{p} + L^{\pone , \infty})(\RR, \mu)$, the following three statements are equivalent:
\begin{enumerate}[(i)]
    \item Every linear $(p,\pone)$-sawyerable operator $T$ is bounded from $X(\RR,\mu)$ to $Y^{\langle p \rangle}(\cS,\nu)$.
    \item Every quasi-linear $(p,\pone)$-sawyerable operator $T$ is bounded from $X(\RR,\mu)$ to $Y^{\langle p \rangle}(\cS,\nu)$.
    \item The operator $R_{p,\pone}$, defined by \eqref{def:sawyerable_op:def_of_R} with $\ptwo$ from~\eqref{def:sawyerable_op:def_of_p}, is bounded from $\bar X(0, \mu(\RR))$ to $\bar Y(0, \nu(\cS))$,
\end{enumerate}
in which $\bar X(0, \mu(\RR))$ and $\bar Y(0, \nu(\cS))$ are representation spaces of $X(\RR,\mu)$ and $Y(\cS, \nu)$, respectively.
\end{theorem}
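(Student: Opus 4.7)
The implication (ii)$\Rightarrow$(i) is immediate because every linear operator is quasilinear, so the real content is in (iii)$\Rightarrow$(ii) and (i)$\Rightarrow$(iii).

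For (iii)$\Rightarrow$(ii), the plan is to upgrade the pair of endpoint estimates into a single pointwise $K$-functional bound. Given a quasilinear $(p,\pone)$-sawyerable $T$ and $f\in L^p+L^{\pone,\infty}$, any decomposition $f=f_0+f_1$ yields, via the quasilinearity constant $k$ and the two endpoint estimates, a corresponding decomposition of $Tf$ with $\|{\cdot}\|_{L^p}$-norm controlled by $\|f_0\|_{L^p}$ and $L^\infty$-norm controlled by $\|f_1\|_{L^{\pone,\infty}}$. Taking the infimum over decompositions gives
\begin{equation*}
K(u,Tf;L^p,L^\infty)\lesssim K(u,f;L^p,L^{\pone,\infty}).
\end{equation*}
Applying Holmstedt's formula on both sides, rewriting $K(u,Tf;L^p,L^\infty)\asymp u\bigl((|Tf|^p)^{**}(u^p)\bigr)^{1/p}$, and substituting $u=t^{1/p}$ (so that $u^{p\pone/(\pone-p)}=t^{\ptwo}$ with $\ptwo=\pone/(\pone-p)$), I would obtain the pointwise majorization
\begin{equation*}
\bigl((|Tf|^p)^{**}(t)\bigr)^{1/p}\lesssim R_{p,\pone}f(t)+\sup_{s\geq t^{\ptwo}}s^{1/\pone}f^*(s).
\end{equation*}
The supremum term is pointwise majorized by a suitable rearrangement of $R_{p,\pone}f$ itself (essentially because $f^*$ nonincreasing forces $s^{1/\pone}f^*(s)\leq R_{p,\pone}f(s^{1/\ptwo})$). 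Applying $\|\cdot\|_{\bar Y}$ to both sides, using rearrangement invariance and (iii), I conclude $\|Tf\|_{Y^{\langle p\rangle}}\lesssim \|f\|_X$.

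For (i)$\Rightarrow$(iii), the plan is to construct, for every nonincreasing $g$ in the representation space $\bar X$, a single linear $(p,\pone)$-sawyerable operator whose output is pointwise $\gtrsim R_{p,\pone}g$, so that (i) forces boundedness of $R_{p,\pone}$. Using nonatomicity and $\sigma$-finiteness of $\mu$ and $\nu$, I would first reduce to the representation intervals $(0,\mu(\RR))$ and $(0,\nu(\cS))$ via measure-preserving isomorphisms. On these intervals I would build a test operator adapted to a dyadic decomposition of $g$: a sum of rank-one blocks of the form $f\mapsto \sum_k c_k\langle f,\phi_k\rangle\chi_{J_k}$, where the windows $\phi_k$ on the source side and the blocks $J_k$ on the target side are tuned to the dyadic intervals $(t_k^{\ptwo},t_{k+1}^{\ptwo})$ appearing in the very definition of $R_{p,\pone}$. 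A direct Schur-type computation on each endpoint, exploiting the relation $\ptwo=\pone/(\pone-p)$ exactly once per scale, verifies the sawyerability conditions $L^p\to L^p$ and $L^{\pone,\infty}\to L^\infty$ with constants independent of the dyadic data. For a test function whose decreasing rearrangement equals~$g$, the output of this operator, after taking $(\cdot)^{**}$ at scale $p$, is of the same order as $R_{p,\pone}g$, which closes the loop.

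The main obstacle is (i)$\Rightarrow$(iii): producing a genuinely linear sawyerable operator that captures the inherently sublinear quantity $R_{p,\pone}g$. The subtle point is not the definition of the test blocks but the bookkeeping across scales—one must keep the $L^p\to L^p$ constant uniformly bounded even when the dyadic levels pile up, and at the same time ensure that the pointwise lower bound by $R_{p,\pone}g$ survives the passage through $(|\cdot|^p)^{**}$. Once this uniform dyadic construction is carried out, the equivalence follows directly from testing (i) against this family and transferring via the fixed measure-preserving identifications.
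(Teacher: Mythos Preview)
Your argument for (iii)$\Rightarrow$(ii) is essentially the paper's: the $K$-functional inequality, Holmstedt, and then domination of the supremum term. Two points deserve more care. First, the pointwise bound $s^{1/\pone}f^*(s)\le R_{p,\pone}f(s^{1/\ptwo})$ is correct, but this only yields $\sup_{s\ge t^{\ptwo}}s^{1/\pone}f^*(s)\le \sup_{\tau\ge t}R_{p,\pone}f(\tau)$, and $R_{p,\pone}f$ is not monotone; to control this running supremum in the $\bar Y$-norm by $\|R_{p,\pone}f\|_{\bar Y}$ one needs the Hardy--Littlewood--P\'olya principle (this is the paper's Proposition~\ref{pro:supremum_operator_is_unnecessary}). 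Second, you omit the case $\nu(\cS)<\infty$: then the pointwise estimate is only available for $t<\nu(\cS)$, and the tail $\sup_{s\ge \nu(\cS)^{\ptwo}}s^{1/\pone}f^*(s)$ must be bounded separately, which is exactly where the hypothesis $X\subseteq L^p+L^{\pone,\infty}$ is used.

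For (i)$\Rightarrow$(iii) your dyadic rank-one block construction is genuinely different from the paper's, and you correctly identify it as the crux---but you do not actually carry it out, and the uniform $L^p\to L^p$ bound across overlapping scales is exactly the delicate point you flag without resolving. The paper sidesteps the dyadic bookkeeping entirely. Given $g\in\bar X$, it builds a \emph{single} linear operator $T=Q_2\circ Q_1\circ S$: here $S$ is the Calder\'on linearization operator from \cite[Chapter~3, Corollary~2.13]{BS} with $S|h|=g^*$ and $\|S\|_{L^1\to L^1},\|S\|_{L^\infty\to L^\infty}\le 1$ (hence bounded on every r.i.\ space); $Q_1$ is the explicit integral operator
\[
Q_1\varphi(t)=\chi_{(0,\nu(\cS))}(t)\,t^{(\ptwo-1)/p}\,\frac{1}{t^{\ptwo}}\int_0^{t^{\ptwo}}\varphi(s)\,ds,
\]
whose $L^p\to L^p$ and $L^{\pone,\infty}\to L^\infty$ bounds follow from a change of variables and Hardy's inequality with no dyadic analysis; and $Q_2$ is composition with a measure-preserving map back to $(\cS,\nu)$. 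Because $S|h|=g^*$ is already nonincreasing, one gets $Q_1(S|h|)(t)\ge t^{(\ptwo-1)/p}g^*(t^{\ptwo})$ pointwise, and after the substitution $s\mapsto s^{\ptwo}$ this produces exactly $R_{p,\pone}g$ inside $\|(|T|h||^p)^{**}\|_{\bar Y}^{1/p}$. The payoff of the paper's route is that sawyerability of the test operator is a two-line computation rather than a scale-by-scale Schur estimate; your approach might be made to work, but as written it is a plan with its hardest step left open.
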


The characterization of boundedness of sawyerable operators enables us to obtain a variety of boundedness results of a given `bad' operator, whose `good' friend is sawyerable.

\begin{theorem}\label{T:general-principle}
Let $(\RR, \mu)$ and $(\cS, \nu)$ be nonatomic $\sigma$-finite measure spaces. Let $X(\RR,\mu)$ and $Y(\mathcal S,\nu)$ be rearrangement-invariant function spaces. Assume that
    \begin{equation}\label{T:general-principle:fundX_vanishes}
        \lim_{t\to 0^+} \|\chi_{(0,t)}\|_{\bar X(0, \mu(\RR))} = 0.
    \end{equation}
    Let $p,\pone$ satisfy~\eqref{E:pq}, and let $\lambda\in(0,1)$ be such that
    \begin{equation}\label{E:parameters-large-enough}
        \lambda p>1.
    \end{equation}
    Let $B$ be a linear operator defined on characteristic functions of $\mu$-measurable subsets of $\RR$ of finite measure and taking values in $\M_0(\cS, \nu)$. Let $G$ be an order preserving $(\lambda p,\lambda \pone)$-sawyerable operator. Assume that $R_{p,\pone}\colon \bar X(0, \mu(\RR)) \to \bar Y(0,\nu(\cS))$, and that there is a constant $C>0$ such that for every $\mu$-measurable set $E\subseteq\RR$ of finite measure: 
    \begin{equation}\label{E:pointwise-general}
        |B\chi_E(y)|
        \leq C |G\chi_E(y)|^{\lambda}
        \quad\text{for $\nu$-a.e.~$y\in\cS$}.
    \end{equation}
    Then
    \begin{equation*}
        B\colon \Lambda_X(\RR,\mu)\to Y^{\langle p\rangle}(\cS,\nu).
    \end{equation*}
\end{theorem}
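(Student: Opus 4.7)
The plan is to reduce the statement to the characteristic-function estimate
\begin{equation*}
\|B\chi_E\|_{Y^{\langle p\rangle}} \leq C\|\chi_E\|_{\Lambda_X}
\qquad\text{for every $\mu$-measurable $E\subseteq\RR$ with $\mu(E)<\infty$,}
\end{equation*}
and then to pass from indicators to arbitrary elements of $\Lambda_X(\RR,\mu)$ by linearity and density. The hypothesis~\eqref{T:general-principle:fundX_vanishes} of vanishing fundamental function at the origin is precisely what makes simple functions dense in $\Lambda_X$ (equivalently, what makes its norm absolutely continuous), and it is what powers the final extension step.

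For the characteristic-function bound, monotonicity of $\|\cdot\|_{Y^{\langle p\rangle}}$ together with~\eqref{E:pointwise-general} gives
\begin{equation*}
\|B\chi_E\|_{Y^{\langle p\rangle}} \leq C\bigl\||G\chi_E|^{\lambda}\bigr\|_{Y^{\langle p\rangle}} = C\bigl\|(|G\chi_E|^{\lambda p})^{**}(t)^{1/p}\bigr\|_Y.
\end{equation*}
Since $\lambda p>1$ and $G$ is $(\lambda p,\lambda\pone)$-sawyerable, I would appeal to the $K$-type pointwise inequality underpinning Theorem~\ref{thm:calderon_thm_for_sawyerable_op}, namely
\begin{equation*}
(|G\chi_E|^{\lambda p})^{**}(t)^{1/(\lambda p)} \leq C' R_{\lambda p,\lambda\pone}\chi_E(t).
\end{equation*}
The associated companion index is the same for both parameter choices because $\pone/(\pone-p)=\lambda\pone/(\lambda\pone-\lambda p)=\ptwo$, and a one-line computation from~\eqref{def:sawyerable_op:def_of_R} yields the identity $(R_{\lambda p,\lambda\pone} g)^{\lambda}=R_{p,\pone}(|g|^{\lambda})$ for every $g$; specializing to $g=\chi_E$ and using $\chi_E^{\lambda}=\chi_E$, raising the displayed inequality to the power $\lambda$ produces
\begin{equation*}
(|G\chi_E|^{\lambda p})^{**}(t)^{1/p} \leq C'' R_{p,\pone}\chi_E(t).
\end{equation*}
Applying $\|\cdot\|_{\bar Y}$, the hypothesis $R_{p,\pone}\colon\bar X(0,\mu(\RR))\to\bar Y(0,\nu(\cS))$, and the identity $\|\chi_{(0,|E|)}\|_{\bar X}=\|\chi_E\|_{\Lambda_X}$ close the characteristic-function step.

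With this bound secured, for a nonnegative simple function $\phi=\sum_{k=1}^{N}c_k\chi_{E_k}$ with $E_k$ disjoint and $c_1>c_2>\cdots>c_N>0$, the telescoping rewriting $\phi=\sum_{k=1}^{N}(c_k-c_{k+1})\chi_{F_k}$, with $F_k=\bigcup_{j\leq k}E_j$ and $c_{N+1}=0$, combined with linearity of $B$ and summation by parts against $\phi^{*}$, yields
\begin{equation*}
\|B\phi\|_{Y^{\langle p\rangle}} \leq \sum_{k=1}^{N}(c_k-c_{k+1})\|B\chi_{F_k}\|_{Y^{\langle p\rangle}} \leq C\sum_{k=1}^{N}(c_k-c_{k+1})\|\chi_{F_k}\|_{\Lambda_X} = C\|\phi\|_{\Lambda_X}.
\end{equation*}
Splitting a signed simple function into its positive and negative parts handles the general simple case, and absolute continuity of $\|\cdot\|_{\Lambda_X}$ (a consequence of~\eqref{T:general-principle:fundX_vanishes}) makes simple functions dense in $\Lambda_X(\RR,\mu)$, so $B$ extends uniquely and continuously to a bounded operator from $\Lambda_X(\RR,\mu)$ into $Y^{\langle p\rangle}(\cS,\nu)$.

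The main obstacle is the first step. The $K$-type pointwise inequality for sawyerable operators is the technical core of Theorem~\ref{thm:calderon_thm_for_sawyerable_op}, and it is the bridge that allows the single hypothesis $R_{p,\pone}\colon\bar X\to\bar Y$ to control the $\lambda$-power of $G\chi_E$ even though $G$ is sawyerable with the shifted parameters $(\lambda p,\lambda\pone)$. Once this inequality is at one's disposal, the identity $(R_{\lambda p,\lambda\pone} g)^{\lambda}=R_{p,\pone}(|g|^{\lambda})$ is a short calculation and the telescoping/density argument is standard.
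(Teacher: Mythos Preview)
Your overall architecture matches the paper's: obtain the characteristic-function bound, telescope to simple functions, and extend by density using~\eqref{T:general-principle:fundX_vanishes}. The final two steps are carried out just as the paper does them.

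The gap is in the pointwise inequality you invoke. The estimate underpinning Theorem~\ref{thm:calderon_thm_for_sawyerable_op} (this is Proposition~\ref{prop:joint_sawyerable_vs_sep_sawyerable}) is not
\[
(|G\chi_E|^{\lambda p})^{**}(t)^{1/(\lambda p)} \leq C'\,R_{\lambda p,\lambda\pone}\chi_E(t),
\]
but rather
\[
(|G\chi_E|^{\lambda p})^{**}(t)^{1/(\lambda p)} \leq C'\Big(R_{\lambda p,\lambda\pone}(\chi_E)^*_\mu(t) + \sup_{s\ge t^{\ptwo}} s^{1/(\lambda\pone)}(\chi_E)^*_\mu(s)\Big).
\]
The supremum term is \emph{not} pointwise dominated by the $R$-term: for $t^{\ptwo}<\mu(E)$ the supremum equals the constant $\mu(E)^{1/(\lambda\pone)}$, whereas $R_{\lambda p,\lambda\pone}\chi_E(t)=t^{(\ptwo-1)/(\lambda p)}\to 0$ as $t\to 0^+$. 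So the inequality you wrote is false as a pointwise statement, and the chain of pointwise manipulations leading to $(|G\chi_E|^{\lambda p})^{**}(t)^{1/p}\le C''\,R_{p,\pone}\chi_E(t)$ breaks.

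The fix is to carry the supremum term along. Raising to the power $\lambda$ and using $(\chi_E^*)^\lambda=\chi_E^*$, the extra term becomes $\sup_{s\ge t^{\ptwo}} s^{1/\pone}\chi_E^*(s)$, and after applying $\|\cdot\|_{\bar Y}$ you invoke Proposition~\ref{pro:supremum_operator_is_unnecessary} (plus the tail argument in the proof of Theorem~\ref{thm:calderon_thm_for_sawyerable_op}, (iii)$\Rightarrow$(ii), when $\nu(\cS)<\infty$) to absorb it into $\|R_{p,\pone}\chi_E\|_{\bar Y}$. The paper sidesteps this bookkeeping by packaging the argument differently: it forms the operator $G_{1/\lambda}f=[G(|f|^{1/\lambda})]^{\lambda}$, observes via Lemma~\ref{L:transfer-of-sawyerability} that $G_{1/\lambda}$ is $(p,\pone)$-sawyerable and quasilinear, and then applies Theorem~\ref{thm:calderon_thm_for_sawyerable_op} as a black box to obtain $\|G_{1/\lambda}\chi_E\|_{Y^{\langle p\rangle}}\lesssim\|\chi_E\|_X$ directly. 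Your identity $(R_{\lambda p,\lambda\pone}g)^{\lambda}=R_{p,\pone}(|g|^{\lambda})$ is exactly the computation behind that lemma, so the two routes are morally the same; the paper's route just delegates the handling of the supremum term to the already-proved Theorem~\ref{thm:calderon_thm_for_sawyerable_op}.
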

\noindent
We recall that an operator $T$ is said to be order preserving if $0 \le f \le g$ $\mu$-a.e. implies $0 \leq T(f)\leq T(g)$ $\nu$-a.e.

Loosely speaking, the technical assumption \eqref{T:general-principle:fundX_vanishes} ensures that the space $X$ does not have an ``$L^\infty$ part''. The space $\Lambda_X(\RR,\mu)$ is the classical Lorentz endpoint space corresponding to $X$, for a detailed definition see Section~\ref{sec:prel}. Let us still recall that when $X$ is $L^p$ (or more generally a Lorentz space $L^{p,q}$) with $p\in(1, \infty)$ (and $q\in[1,\infty]$), then $\Lambda_X$ is the Lorentz space $L^{p,1}$, and \eqref{T:general-principle:fundX_vanishes} is satisfied. It is worth noticing that $L^{p,1}$ is precisely the function space appearing on the right-hand side of \eqref{kristensen-korobkov}.

Having stated two abstract theorems, it is in order to illustrate their usage on some practical examples. We first address the question of boundedness of the operator $R_{p,\pone}$ on appropriate function spaces, as one of the key ingredients of the theory. The following theorem characterizes when $R_{p,\pone}$ is bounded between two Lorentz spaces (and so also between two Lebesgue spaces).

\begin{theorem}
\label{thm:R_boundedness_lorentz_infty}
    Let $r_1,r_2,s_1,s_2\in[1,\infty]$ satisfy \eqref{prel:lorentz_ri_cond} with $p=r_j$ and $q=s_j$, $j=1,2$. Let $p,\pone$ satisfy~\eqref{E:pq} and let $r$ be defined by~\eqref{def:sawyerable_op:def_of_p}. Then
    \begin{equation}\label{E:R-on-lorentz}
        R_{p,\pone}\colon 
            L^{r_1,s_1}(0,\infty)\to L^{r_2,s_2}(0, \infty)
    \end{equation}
		if and only if
    \begin{align}
        &r_1\in(p,\pone ),\quad s_1\le s_2,\quad\text{and}\quad\frac{1}{\pone }+\frac{1}{\ptwo r_2}=\frac{1}{r_1},
        \label{E:R-lorentz-parameters-i}
    \end{align}  
    or
    \begin{align}
        &r_1=p,\quad r_2=p,\quad s_1\le p,\quad\text{and}\quad s_2=\infty,
        \label{E:R-lorentz-parameters-ii}
    \end{align}  
    or
    \begin{align}
        &r_1=\pone ,\quad r_2=\infty\quad\text{and}\quad s_2=\infty.
        \label{E:R-lorentz-parameters-iii}
    \end{align}
\end{theorem}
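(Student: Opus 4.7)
By the rearrangement invariance of the involved Lorentz quasi-norms it suffices to work with nonincreasing $h = g^*$. Using the identity $(\ptwo-1)/(p\ptwo) = 1/\pone$, a direct computation yields the key decomposition
\[
R_{p,\pone}g(t) = t^{(\ptwo-1)/p}\bigl((h^p)^{**}(t^{\ptwo})\bigr)^{1/p},
\]
from which two sharp pointwise endpoint bounds follow at once: extending the upper limit of integration gives $R_{p,\pone}g(t)\le t^{-1/p}\|g\|_{L^p}$, and inserting $h(s)\le \|g\|_{L^{\pone,\infty}}s^{-1/\pone}$ together with the identity $\ptwo(1-p/\pone)=1$ gives $R_{p,\pone}g(t)\le C_{p,\pone}\|g\|_{L^{\pone,\infty}}$. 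Thus $R_{p,\pone}\colon L^p\to L^{p,\infty}$ and $R_{p,\pone}\colon L^{\pone,\infty}\to L^\infty$. Combined with the standard embeddings $L^{p,s_1}\hookrightarrow L^p$ for $s_1\le p$ and $L^{\pone,s_1}\hookrightarrow L^{\pone,\infty}$ for any $s_1$, these settle sufficiency in the two endpoint cases~\eqref{E:R-lorentz-parameters-ii} and~\eqref{E:R-lorentz-parameters-iii}.

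For sufficiency in the intermediate case~\eqref{E:R-lorentz-parameters-i}, my plan is to real-interpolate the two endpoint bounds. Using the Hardy-Littlewood inequality $(f+g)^*(s)\le f^*(s/2)+g^*(s/2)$ together with $(a+b)^p\le 2^{p-1}(a^p+b^p)$ one checks that $R_{p,\pone}$ is quasilinear with constant at most $2$, so the real interpolation method applies and produces, for each $\theta\in(0,1)$ and $s\in[1,\infty]$,
\[
R_{p,\pone}\colon (L^p,\, L^{\pone,\infty})_{\theta,s} \to (L^{p,\infty},\, L^\infty)_{\theta,s}.
\]
The classical identification of real interpolation spaces between Lorentz spaces (Bergh-L\"ofstr\"om) gives the source $L^{r_1,s}$ with $1/r_1 = (1-\theta)/p+\theta/\pone$ and the target $L^{r_2,s}$ with $1/r_2 = (1-\theta)/p$; eliminating $\theta$ between these two relations reproduces exactly the scaling $1/r_1 = 1/\pone+1/(\ptwo\, r_2)$. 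The relaxation $s_1\le s_2$ is then absorbed by factoring through any $s\in[s_1,s_2]$ via the Lorentz embeddings $L^{r_1,s_1}\hookrightarrow L^{r_1,s}$ and $L^{r_2,s}\hookrightarrow L^{r_2,s_2}$.

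Necessity I would establish by testing. For the family $g_a:=\chi_{(0,a)}$, $a>0$, the key decomposition gives the explicit form $R_{p,\pone}g_a(t) = \min\bigl(t^{(\ptwo-1)/p},\,a^{1/p}t^{-1/p}\bigr)$, attaining its maximum $a^{1/\pone}$ at $t=a^{1/\ptwo}$, while $\|g_a\|_{L^{r_1,s_1}}\simeq a^{1/r_1}$. Requiring~\eqref{E:R-on-lorentz} to hold uniformly in $a$ forces the scaling $1/r_1 = 1/\pone+1/(\ptwo\, r_2)$ by homogeneity, and finiteness of the target integral at $t\to 0^+$ and $t\to\infty$ forces $r_1\in[p,\pone]$; at the two extreme values the same integrability constraints pin down $s_2=\infty$ together with $r_2=p$ (resp.~$r_2=\infty$), matching cases~\eqref{E:R-lorentz-parameters-ii} and~\eqref{E:R-lorentz-parameters-iii}. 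The finer condition $s_1\le s_2$ in case~\eqref{E:R-lorentz-parameters-i} I would extract by testing on lacunary step functions $h=\sum_k \lambda_k\chi_{(A_{k-1},A_k)}$ with $A_k$ growing geometrically and $\lambda_k$ chosen so that~\eqref{E:R-on-lorentz} reduces to a weighted $\ell^{s_1}\to\ell^{s_2}$ sequence inequality, which is classically known to hold iff $s_1\le s_2$.

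The main technical obstacle is carrying out the real-interpolation step for the quasilinear $R_{p,\pone}$ between the quasi-Banach pair $(L^p,L^{\pone,\infty})$ on the source side and $(L^{p,\infty},L^\infty)$ on the target side, and rigorously identifying both interpolation scales as Lorentz spaces with the stated indices; quasi-Banach interpolation and the $L^\infty$ endpoint both require a careful reading of the standard identifications. Once that bookkeeping is settled, the rest of the argument reduces to direct computations together with routine Lorentz-space embeddings.
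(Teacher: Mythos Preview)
Your sufficiency argument is correct and is precisely the alternative route the paper itself flags in Remark~\ref{rem:R_boundedness_lorentz_infty_remarks}: Proposition~\ref{prop:endpoint_estimates_for_R} supplies the two endpoint bounds, real interpolation of the (sub)linear $R_{p,\pone}$ between the couples $(L^p,L^{\pone,\infty})$ and $(L^{p,\infty},L^\infty)$ yields case~\eqref{E:R-lorentz-parameters-i}, and Lorentz embeddings handle~\eqref{E:R-lorentz-parameters-ii} and~\eqref{E:R-lorentz-parameters-iii}. The paper instead rewrites the boundedness as a weighted inequality $\|t^{1/\pone+1/(\ptwo r_2)-1/s_2}h^{**}(t)^{1/p}\|_{s_2}\lesssim\|t^{1/r_1-1/s_1}h^*(t)^{1/p}\|_{s_1}$ and invokes the sharp two-weight Hardy/monotone-function theory of \cite{Pic:13} and \cite{Gog:13}, which simultaneously delivers necessity and suggests how to generalize beyond Lorentz targets. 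Your route is more elementary for sufficiency but pushes all the work into the necessity side.

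There is a genuine gap in your necessity argument: the constraint $s_1\le p$ in case~\eqref{E:R-lorentz-parameters-ii} is not captured by either of your test families. Characteristic functions $\chi_{(0,a)}$ give (via the scaling $f_a(t)=a^{1/\pone}f_1(ta^{-1/\ptwo})$ you computed) only the balance relation and the requirement $\|f_1\|_{L^{r_2,s_2}}<\infty$, which at $r_1=p$ forces $r_2=p$, $s_2=\infty$ but says nothing about $s_1$. Your lacunary test at this endpoint would reduce to an $\ell^{s_1}\to\ell^\infty$ embedding, which holds for every $s_1$ and so again imposes no restriction. To see that $s_1>p$ fails, you need a test exploiting the non-integrability of $(g^*)^p$ near zero: for instance $g^*(t)=t^{-1/p}(\log(e/t))^{-\alpha}\chi_{(0,1)}(t)$ with $\alpha\in(1/s_1,1/p)$ lies in $L^{p,s_1}$ but has $\int_0^{t^\ptwo}(g^*)^p=\infty$ for every $t>0$, so $R_{p,\pone}g\equiv\infty$. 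This extra family (or an equivalent one) must be added to close the argument; the paper avoids this by reading off the restriction directly from the weighted-Hardy conditions in \cite{Gog:13}.
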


Now when the boundedness properties of $R_{p,\pone}$ between Lorentz spaces is at our disposal, the next step is to investigate what the operation $(\cdot)^{\langle p\rangle}$ does on them. By~\cite[Examples~4.7 and 4.9]{Tur:23}, we have
\begin{equation}\label{ex:sawyerable_T_boundedness_lorentz_infty:power}
    (L^{r,s})^{\langle p\rangle}(\cS, \nu) = \begin{cases}
        L^{r,s}(\cS, \nu)\quad&\text{when $p<r<\infty$ and $s\in[1,\infty]$},\\
        L^p(\cS,\nu) \quad&\text{when $r=p$ and $s=\infty$},\\
				L^\infty(\cS,\nu) \quad&\text{when $r=s=\infty$}.
    \end{cases}
\end{equation}
What is particularly important here is that the operation enhances $L^{p,\infty}$ to $L^p$, which in turn leads to stronger estimates for sawyerable operators.

The strength and generality of Theorems \ref{thm:calderon_thm_for_sawyerable_op} and \ref{T:general-principle} can be fully understood in detail by the applications discussed at the end of Section~\ref{sec:calderon_general_theory}. Their important application involving Lorentz spaces, which generalizes Theorem~A, is our Theorem \ref{mainresult-prime} stated above. In fact, it is a corollary of a more general theorem, namely Theorem~\ref{thm:most_general_KK}, which establishes a general boundedness result for the Riesz potential $I_\alpha^\mu$, defined by \eqref{generalized_potential_def}, under suitable assumptions on a pair of Radon measures $\mu, \nu$. It relies on our principle to connect a bad operator, $I_\alpha^\mu$, to a good one, for which bounds are known. Here the good operator is a suitable dyadic maximal function associated to the measure $\mu$ for which bounds were established by E. Sawyer in~\cite{Sawyer1} (see the discussion before Lemma~\ref{bigger_lemma} for more details).

\section{Preliminaries}\label{sec:prel}
In the entire paper, we use the convention that $0\cdot \infty = 0$. We use the symbol $\lesssim$ in inequalities to mean that the left-hand side is less than or equal to a constant multiple of the right-hand side, with the multiplicative constant independent of all important quantities. When it is not obvious from the context what the important quantities are, we explicitly state it. Loosely speaking, the multiplicative constant may depend on parameters of function spaces (such as $p$ in the case of $L^p$ spaces) and on the measure of the underlying measure space in the case of finite measure spaces. We also use the symbol $\approx$ when $\lesssim$ and $\gtrsim$ hold simultaneously, where $\gtrsim$ substitutes for $\lesssim$ with switched sides.

Let $(\RR,\mu)$ be a nonatomic measure space. The set of all $\mu$-measurable functions on $\RR$ is denoted by $\M(\RR, \mu)$. We denote by $\M^+(\RR, \mu)$ and $\M_0(\RR, \mu)$ its subset consisting of those functions that are nonnegative and finite $\mu$-a.e., respectively. We say that functions $f\in\M(\RR, \mu)$ and $g\in\M(\cS, \nu)$, where $(\cS, \nu)$ is another (possibly different) measure space, are \emph{equimeasurable} if their distributional functions coincide, that is,
\begin{equation*}
    \mu(\{x\in\RR: |f(x)| > \lambda \}) = \nu(\{y\in\cS: |g(y)| > \lambda\}) \quad \text{for every $\lambda > 0$}.
\end{equation*}

The \emph{nonincreasing rearrangement} of a function $f\in\M(\RR, \mu)$ is the function $f^*_\mu\colon (0, \infty) \to [0, \infty]$ defined as
\begin{equation*}
    f^*_\mu(t) = \inf\{\lambda>0: \mu(\{x\in\RR: |f(x)| > \lambda\})\leq t\},\ t\in(0, \infty).
\end{equation*}
It clearly follows from the definition that 
\begin{equation}\label{E:lattice}
    \text{if $|f|\le|g|$ $\mu$-a.e., then  $f^*_\mu\le g^*_\mu$}.
\end{equation}
The nonincreasing rearrangement is nonincreasing and right-continuous. Moreover, $f$ and $f^*_\mu$ are equimeasurable, and $f^*_\mu$ vanishes in the interval $[\mu(\RR), \infty)$. The \emph{maximal nonincreasing rearrangement} of a function $f\in\M(\RR, \mu)$ is the function $f^{**}_\mu\colon (0, \infty) \to [0, \infty]$ defined as
\begin{equation*}
    f^{**}_\mu(t) = \frac1{t}\int_0^t f^*_\mu(s) \ds,\ t\in(0, \infty).
\end{equation*}
The maximal nonincreasing rearrangement is nonincreasing and continuous. Moreover, it dominates the nonincreasing rearrangement, i.e., $f^*_\mu\leq f^{**}_\mu$. The maximal nonincreasing rearrangement satisfies (see~\cite[Chapter 2, Proposition~3.3]{BS})
\begin{equation}\label{prel:double_star_alt_expr}
    f^{**}_\mu(t) = \frac1{t}\sup_{E\subseteq \RR, \mu(E) = t} \int_E |f(x)| \intdif{\mu(x)} \quad \text{for every $t\in(0, \mu(\RR))$}.
\end{equation}
A special case of the \emph{Hardy--Littlewood inequality} tells us that
\begin{equation}\label{prel:HL_ineq}
    \int_E |f(x)| \intdif{\mu(x)} \leq \int_0^{\mu(E)} f^*_\mu(t) \dt \quad \text{for every $f\in\M(\RR, \mu)$}
\end{equation}
and every $\mu$-measurable $E\subseteq\RR$.

A functional $\|\cdot\|_{X(\RR, \mu)}\colon  \M^+ (\RR, \mu) \to [0,\infty]$ is called a \emph{rearrangement-invariant Banach function norm} if, for all $f$, $g$ and $\{f_j\}_{j\in\N}$ in $\M^+(\RR, \mu)$, and every $\lambda \geq0$, the following properties hold:
\begin{enumerate}[(P1)]
\item $\|f\|_{X(\RR, \mu)} = 0$ if and only if $f=0$ $\mu$-a.e.;
$\|\lambda f\|_{X(\RR, \mu)}= \lambda \|f\|_{X(\RR, \mu)}$; $\|f+g\|_{X(\RR, \mu)}\leq \|f\|_{X(\RR, \mu)} + \|g\|_{X(\RR, \mu)}$;
\item $  f \le g$ $\mu$-a.e.\ implies $\|f\|_{X(\RR, \mu)}\leq\|g\|_{X(\RR, \mu)}$;
\item $  f_j \nearrow f$ $\mu$-a.e.\ implies
$\|f_j\|_{X(\RR, \mu)} \nearrow \|f\|_{X(\RR, \mu)}$;
\item $\|\chi_E\|_{X(\RR, \mu)} < \infty$ for every $E\subseteq \RR$ of finite measure;
\item  if $E\subseteq \RR$ is of finite measure, then $\int_{E} f\intdif{\mu(x)} \le C_E
\|f\|_{X(\RR, \mu)}$, where $C_E$ is a positive constant possibly depending on $E$ and $\|\cdot\|_{X(\RR, \mu)}$ but not on $f$;
\item $\|f\|_{X(\RR, \mu)} = \|g\|_{X(\RR, \mu)}$ whenever $f$ and $g$ are equimeasurable.
\end{enumerate}
We extend $\|\cdot\|_{X(\RR, \mu)}$ to all functions $f\in\M(\RR, \mu)$ by defining
\begin{equation*}
    \|f\|_{X(\RR, \mu)} = \|\,|f|\,\|_{X(\RR, \mu)},\ f\in\M(\RR, \mu).
\end{equation*}
The functional $\|\cdot\|_{X(\RR, \mu)}$ is a norm on the set
\begin{equation*}
X(\RR, \mu) = \{f\in\M(\RR, \mu)\colon \| f \|_{X(\RR, \mu)}<\infty\},
\end{equation*}
with the convention that we identify functions which agree $\mu$ almost everywhere.
In fact, $X(\RR, \mu)$ endowed with $\|\cdot\|_{X(\RR, \mu)}$ is a Banach space, which is contained in $\M_0(\RR, \mu)$. We will call $X(\RR, \mu)$ a \emph{rearrangement-invariant function space}. When $(\RR, \mu)$ is an interval $(0, a)$ endowed with the Lebesgue measure, where $a\in(0, \infty]$, we write $X(0, a)$ for the sake of simplicity, and we will also omit the subscript in the notation of rearrangements.

When $X(\RR, \mu)$ and $Y(\RR, \mu)$ are two rearrangement-invariant function spaces, $X(\RR, \mu) \subseteq Y(\RR, \mu)$ means that there is a constant $C>0$ such that
\begin{equation*}
    \|f\|_{Y(\RR,\mu)} \leq C \|f\|_{X(\RR, \mu)} \quad \text{for every $f\in\M(\RR, \mu)$}.
\end{equation*}
By $X(\RR, \mu) = Y(\RR, \mu)$, we mean that $X(\RR, \mu) \subseteq Y(\RR, \mu)$ and $Y(\RR, \mu) \subseteq X(\RR, \mu)$ simultaneously. In other words, the rearrangement-invariant function spaces coincide up to equivalent norms.

Given a rearrangement-invariant function space $X(\RR, \mu)$, its \emph{representation space} is the unique rearrangement-invariant function space $\bar X(0, \mu(\RR))$
representing $X(\RR, \mu)$ in the sense that (see~\cite[Chapter~2, Theorem~4.10]{BS})
\begin{equation*}
	\|f\|_{X(\RR, \mu)} = \|f^*_\mu\|_{\bar X(0, \mu(\RR))} \quad \text{for every $f\in\M(\RR, \mu)$}.
\end{equation*}
Note that $X(0, a) = \bar X(0, a)$ for every $a\in(0, \infty]$, and $X(\RR, \mu) \subseteq Y(\RR, \mu)$ if and only if $\bar X(0, \mu(\RR)) \subseteq \bar Y(0, \mu(\RR))$.

Textbook examples of rearrangement-invariant function spaces are the Lebesgue spaces $L^p(\RR, \mu)$, $p\in[1,\infty]$. Their rearrangement invariance follows from the layer cake representation formula (e.g., see~\cite[Theorem~1.13]{LL:01}). More precisely, we have
\begin{equation*}
    \|f\|_{L^p(\RR, \mu)} = \|f^*_\mu\|_{L^p(0, \mu(\RR))} \quad \text{for every $f\in\M(\RR, \mu)$}.
\end{equation*}
Lorentz spaces and Orlicz spaces are other important and well-known examples of rearrangement-invariant function spaces. In this paper, apart from Lebesgue spaces, we also work with Lorentz spaces $L^{p,q}(\RR,\mu)$, and so we briefly introduce them here. The functional $\|\cdot\|_{L^{p,q}(\RR, \mu)}$ defined as
\begin{equation*}
    \|f\|_{L^{p,q}(\RR, \mu)} = \|t^{\frac1{p} - \frac1{q}} f^*_\mu(t)\|_{L^q(0, \mu(\RR))},\ f\in\M(\RR, \mu),
\end{equation*}
is a rearrangement-invariant Banach function norm if and only if $1\leq q \leq p < \infty$ or $p=q=\infty$. When $1<p<q\leq \infty$, it satisfies all the properties of a rearrangement-invariant Banach function norm except~(P1) (more precisely, the functional is not subadditive). However, it is still at least equivalent to a rearrangement-invariant Banach function norm even when $1<p<q\leq \infty$\textemdash the norm is defined in the same way but with $f^*_\mu$ replaced by $f^{**}_{\mu}$. As we will not be interested in precise values of constants, we will consider $L^{p,q}(\RR,\mu)$ a rearrangement-invariant function space whenever
\begin{equation}\label{prel:lorentz_ri_cond}
    p=q=1\quad\text{or}\quad p\in(1, \infty)\ \text{and}\ q\in[1,\infty],\quad\text{or}\quad p=q=\infty.
\end{equation}
In the remaining part of the paper, it will be implicitly assumed that the parameters $p,q$ satisfy \eqref{prel:lorentz_ri_cond}. Moreover, we have
\begin{equation}\label{prel:two_stars_on_Lorentz_spaces}
    \|f^{**}\|_{L^{p,q}(0,\infty)} \leq p' \|f\|_{L^{p,q}(0, \infty)} \quad \text{for every $f\in\M(0, \infty)$}
\end{equation}
provided that $p>1$ (e.g.,~see~\cite[Chapter~4, Lemma~4.5]{BS}).
Note that $L^{p,p}(\RR, \mu) = L^p(\RR, \mu)$ (in fact, they have the same norms). The Lorentz spaces $L^{p,\infty}(\RR, \mu)$ are often called weak Lebesgue spaces. Lorentz spaces are increasing with respect to the second parameter, i.e.,
\begin{equation}\label{prel:lorentz_second_index_increasing}
    L^{p,q_1}(\RR, \mu)\subseteq L^{p,q_2}(\RR, \mu) \quad \text{when $q_1\leq q_2$}.
\end{equation}
Furthermore, note that $\bar X(0, \mu(\RR)) = L^{p,q}(0, \mu(\RR))$ when $X(\RR, \mu) =  L^{p,q}(\RR, \mu)$ (possibly up to equivalent norms).

The \emph{fundamental function} of a rearrangement-invariant function space $X(\RR, \mu)$ is the function $\varphi_X\colon(0, \mu(\RR)) \to (0, \infty)$ defined as
\begin{equation*}
    \varphi_X(t) = \|\chi_{(0,t)}\|_{\bar X(0, \mu(\RR))},\ t\in(0, \mu(\RR)).
\end{equation*}
Notice that $\varphi_X(t) = \|\chi_E\|_{X(\RR, \mu)}$, where $E\subseteq\RR$ is any subset of $\RR$ satisfying $\mu(E) = t$. For example, $\varphi_{L^p}(t) = t^\frac1{p}$. More generally, $\varphi_{L^{p,q}}(t) \approx t^\frac1{p}$.

Given a rearrangement-invariant function space $X(\RR, \mu)$, we define the functional $\|\cdot\|_{\Lambda_X(\RR, \mu)}$ as
\begin{equation*}
    \|f\|_{\Lambda_X(\RR, \mu)} = \|f\|_{L^\infty(\RR, \mu)} \varphi_X(0^+) + \int_0^\infty f^*_\mu(s) \varphi_X'(s) \ds,\ f\in\M(\RR, \mu).
\end{equation*}
The functional $\|\cdot\|_{\Lambda_X(\RR, \mu)}$ is a rearrangement-invariant function norm provided that $\varphi_X$ is concave. The fundamental function of a rearrangement-invariant function space is quasiconcave but it need not be concave in general. If $\varphi_X$ is only quasiconcave, then the functional $\|\cdot\|_{\Lambda_X(\RR, \mu)}$ is not necessarily subadditive (cf.~\cite{L:51}). However, there always is an equivalent rearrangement-invariant function norm on $X(\RR, \mu)$ with respect to which the fundamental function is concave. The space $\Lambda_X(\RR, \mu)$ is contained in $X(\RR, \mu)$, and their fundamental functions  coincide (possibly up to multiplicative constants). For example,
\begin{equation}\label{prel:Lorentz_spaces_Lambda_endpoint}
    \Lambda_{L^{p,q}}(\RR, \mu) = L^{p,1}(\RR, \mu)
\end{equation}
provided that $p<\infty$, and $\Lambda_{L^\infty}(\RR, \mu) = L^\infty(\RR, \mu)$. Furthermore, there is also a largest rearrangement-invariant function space with the same fundamental function as $X(\RR, \mu)$, which is equivalent to $L^{p,\infty}(\RR, \mu)$ for $X(\RR, \mu)=L^{p,q}(\RR, \mu)$ with $p>1$. The interested reader can find more information in \cite[Chapter~2, Section~5]{BS} (for spaces endowed with norms) and also in~\cite{Nek:24}  (for spaces endowed with quasinorms).

Given $\alpha>0$ and a rearrangement-invariant function space $X(\RR,\mu)$, the function space $X^{\langle \alpha \rangle}(\RR,\mu)$ is defined as the collection of all $f\in\M(\RR, \mu)$ such that
$\|f\|_{X^{\langle \alpha \rangle}(\RR,\mu)}<\infty$, where 
\begin{equation*}
    \|f\|_{X^{\langle \alpha \rangle}(\RR,\mu)}
    =    \big\|\big((|f|^{\alpha})_\mu^{**}\big)^{\frac{1}{\alpha}}\big\|_{\bar X(0, \mu(\RR))}.
\end{equation*}
When $\mu(\RR) < \infty$, $X^{\langle \alpha \rangle}(\RR,\mu)$ is always a rearrangement-invariant function space. When $\mu(\RR) = \infty$, $X^{\langle \alpha \rangle}(\RR,\mu)$ is a rearrangement-invariant function space unless it is trivial (i.e., it contains only the zero function, and so it does not satisfy (P4)), which may happen. For a detailed study of the spaces $X^{\langle \alpha \rangle}(\RR,\mu)$, see~\cite{Tur:23} (recall also~\eqref{ex:sawyerable_T_boundedness_lorentz_infty:power}).  Let us recall that these spaces play a decisive role for Sobolev embeddings into spaces with slowly decaying Frostman measures, as was recently pointed out in~\cite[Theorem~5.1]{Cia:20}.

Finally, given two rearrangement-invariant function spaces $X(\RR, \mu)$ and $Y(\RR, \mu)$ over the same measure space, their sum $(X + Y)(\RR, \mu)$ endowed with
\begin{equation*}
    \|f\|_{(X + Y)(\RR, \mu)} = K(f, 1; X, Y),\ f\in\M^+(\RR, \mu),
\end{equation*}
is also a rearrangement-invariant function space. Here $K$ is the Peetre $K$-functional defined as, for $f\in\M^+(\RR, \mu)$ and $t\in(0,\infty)$,
\begin{equation*}
    K(f, t; X, Y) = \inf_{f=g+h} \Big( \|g\|_{X(\RR, \mu)} + t \|h\|_{Y(\RR, \mu)} \Big).
\end{equation*}
The $K$-functional is nondecreasing in $t$ and the function $(0, \infty) \ni t \mapsto t^{-1}K(f, t; X, Y)$ is nonincreasing.
We have, for all $a,b>0$,
\begin{equation}\label{prel:K_equiv_norms}
  \min\Big\{ \frac{a}{b},1 \Big\}K(f, b; X, Y) \leq K(f, a; X, Y) \leq \max\Big\{ \frac{a}{b},1 \Big\}K(f, b; X, Y).
\end{equation}
Equivalent expressions for the $K$-functional between a pair of function spaces are known for a large number of function spaces. For example, see~\cite{H:70} for the expression of the $K$-functional for a pair of Lorentz spaces (in particular, for a pair of Lebesgue spaces). The interested reader can find more information about the $K$-functional in \cite[Chapter~5]{BS}.

Finally, every rearrangement-invariant function space $X(\RR, \mu)$ is contained in $(L^1 + L^\infty)(\RR, \mu)$ (e.g., see~\cite[Chapter~2, Theorem~6.6]{BS}).

\section{Sawyerability and properties of the governing operator}\label{sec:calderon_general_theory}
We start with a characterization of sawyerable operators.
\begin{proposition}\label{prop:joint_sawyerable_vs_sep_sawyerable}
Let $(\RR,\mu)$ and $(\cS, \nu)$ be nonatomic $\sigma$-finite measure spaces. Assume that $p,\pone$ satisfy \eqref{E:pq}, and let $r$ be defined by~\eqref{def:sawyerable_op:def_of_p}. Let $T$ be a quasi-linear operator defined on $(L^{p} + L^{\pone , \infty})(\RR, \mu)$ and taking values in $\M_0(\cS, \nu)$. Then the following three statements are equivalent.
\begin{enumerate}[(i)]
    \item The operator $T$ is $(p,\pone)$-sawyerable, i.e., it satisfies the endpoint estimates~\eqref{saw_op_p_p_endpoint} and \eqref{saw_op_p1_infty_endpoint}.
    \item There is a constant $C>0$ such that
\begin{equation}\label{prop:joint_sawyerable_vs_sep_sawyerable:pointwise}
    \big( |Tf|^p \big)_\nu^{**} (t)^\frac1{p} \leq C 
    \Big( R_{p,\pone}f_\mu^*(t) + \sup_{s\in[t^{\ptwo }, \infty)} s^\frac1{\pone } f_\mu^*(s) \Big) \quad \text{for all $t\in(0, \nu(\cS))$}
\end{equation}
and every $f\in (L^{p} + L^{\pone , \infty})(\RR, \mu)$,
where the operator $R_{p,\pone}$ is defined by~\eqref{def:sawyerable_op:def_of_R}.
\item There is a positive constant $C>0$ such that
\begin{equation}\label{prop:joint_sawyerable_vs_sep_sawyerable:K}
        K(Tf, t; L^p, L^\infty) \leq C K(f, t; L^p, L^{\pone , \infty}) \quad \text{for all $t\in(0,\infty)$}
\end{equation}
and every $f\in (L^{p} + L^{\pone , \infty})(\RR, \mu)$.
\end{enumerate}
\end{proposition}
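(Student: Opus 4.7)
The plan is to prove the cyclic chain $(i)\Rightarrow(iii)\Rightarrow(ii)\Rightarrow(i)$, bridged by Holmstedt's formulas (see~\cite{H:70}) for the $K$-functional, which translate the interpolation inequality in (iii) into the pointwise form (ii). For $(i)\Rightarrow(iii)$, I would argue as in the standard derivation of $K$-functional inequalities from a pair of endpoint estimates, adapted to quasilinearity. Given any decomposition $f=g+h$ with $g\in L^p(\RR,\mu)$ and $h\in L^{\pone,\infty}(\RR,\mu)$, quasilinearity gives $|Tf|\leq k(|Tg|+|Th|)$ $\nu$-a.e. Writing $Tf=(Tf)\chi_A+(Tf)\chi_{\cS\setminus A}$ with $A=\{|Tg|\geq|Th|\}$, the first summand is dominated by $2k|Tg|\in L^p(\cS,\nu)$ via~\eqref{saw_op_p_p_endpoint} and the second by $2k|Th|\in L^\infty(\cS,\nu)$ via~\eqref{saw_op_p1_infty_endpoint}. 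Summing and taking the infimum over decompositions bounds $K(Tf,t;L^p,L^\infty)$ by a constant multiple of $M_0\|g\|_{L^p}+tM_1\|h\|_{L^{\pone,\infty}}$, and \eqref{prel:K_equiv_norms} absorbs the factor $M_1/M_0$ to yield~\eqref{prop:joint_sawyerable_vs_sep_sawyerable:K}.

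For $(iii)\Leftrightarrow(ii)$, the key input is Holmstedt's formula, which in the present setting gives
\begin{equation*}
    K(h,t;L^p,L^\infty)\approx \Big(\int_0^{t^p} h^*_\nu(s)^p\ds\Big)^{1/p}
\end{equation*}
for the target pair and
\begin{equation*}
    K(f,t;L^p,L^{\pone,\infty})\approx \Big(\int_0^{t^{p\ptwo}} f^*_\mu(s)^p\ds\Big)^{1/p}+t\sup_{s\geq t^{p\ptwo}} s^{1/\pone}f^*_\mu(s)
\end{equation*}
for the source pair, with transition exponent $p\ptwo=1/(1/p-1/\pone)$. Substituting $t=\tau^{1/p}$ in \eqref{prop:joint_sawyerable_vs_sep_sawyerable:K}, so that $t^{p\ptwo}=\tau^{\ptwo}$, and dividing both sides by $\tau^{1/p}$ transforms the left-hand side into $\big((|Tf|^p)^{**}_\nu(\tau)\big)^{1/p}$ and the right-hand side into a constant multiple of $R_{p,\pone}f^*_\mu(\tau)+\sup_{s\geq\tau^{\ptwo}}s^{1/\pone}f^*_\mu(s)$, thereby identifying \eqref{prop:joint_sawyerable_vs_sep_sawyerable:K} with \eqref{prop:joint_sawyerable_vs_sep_sawyerable:pointwise} in both directions.

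Finally, for $(ii)\Rightarrow(i)$, I plan to extract each endpoint by passing to an appropriate limit in \eqref{prop:joint_sawyerable_vs_sep_sawyerable:pointwise}. To recover \eqref{saw_op_p_p_endpoint}, I raise that inequality to the $p$-th power and multiply by $t$: the first term on the right becomes $\int_0^{t^{\ptwo}} f^*_\mu(s)^p\ds\leq\|f\|_{L^p}^p$, and using $s\,f^*_\mu(s)^p\leq\|f\|_{L^p}^p$ together with the algebraic identity $\ptwo(1-p/\pone)=1$ (the identity that makes the endpoints of the pairs $(L^p,L^{\pone,\infty})$ and $(L^p,L^\infty)$ line up) shows that the second term is likewise dominated by $\|f\|_{L^p}^p$. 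Since $t(|Tf|^p)^{**}_\nu(t)=\int_0^t(|Tf|^p)^*_\nu(s)\ds$ is nondecreasing in $t$ with supremum equal to $\|Tf\|_{L^p(\cS,\nu)}^p$, taking the supremum in $t$ on the left yields \eqref{saw_op_p_p_endpoint}. For \eqref{saw_op_p1_infty_endpoint}, the pointwise bound $f^*_\mu(s)\leq s^{-1/\pone}\|f\|_{L^{\pone,\infty}}$ combined with $\ptwo(1-p/\pone)=1$ controls both terms on the right of \eqref{prop:joint_sawyerable_vs_sep_sawyerable:pointwise} by a constant multiple of $\|f\|_{L^{\pone,\infty}}$ uniformly in $t$; letting $t\to 0^+$ and using $(|Tf|^p)^{**}_\nu(0^+)=\|Tf\|_{L^\infty}^p$ delivers the second endpoint. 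The main technical hurdle is the bookkeeping of Holmstedt's transition exponent $p\ptwo$ through the change of variable $t\mapsto\tau^{1/p}$; once the exponents match, the remaining manipulations are routine.
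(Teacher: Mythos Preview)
Your proof is correct and follows essentially the same route as the paper: both hinge on Holmstedt's formulas to pass between the $K$-functional inequality and the pointwise estimate, and both recover the endpoint bounds by trivial limits/decompositions. One small remark: your phrase ``in both directions'' for $(iii)\Leftrightarrow(ii)$ glosses over the case $\nu(\cS)<\infty$, where \eqref{prop:joint_sawyerable_vs_sep_sawyerable:pointwise} only gives \eqref{prop:joint_sawyerable_vs_sep_sawyerable:K} on the restricted range $t\in(0,\nu(\cS)^{1/p})$ and an extra argument is needed to extend it (the paper handles this explicitly); however, your cyclic chain $(i)\Rightarrow(iii)\Rightarrow(ii)\Rightarrow(i)$ only uses the direction $(iii)\Rightarrow(ii)$, so this does not affect the validity of your argument.
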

\begin{proof}
First, assume that (i) is true, i.e., $T$ is bounded from $L^p(\RR,\mu)$ and $L^{\pone , \infty}(\RR,\mu)$ to $L^p(\cS,\nu)$ and $L^\infty(\cS,\nu)$, respectively. Since $T$ is quasi-linear, it follows (see~\cite[Proposition~3.1.15]{BK:91} and \cite[Chapter~5, Theorem~1.11]{BS}) that \eqref{prop:joint_sawyerable_vs_sep_sawyerable:K} is true with a constant $C>0$ depending only on $\|T\|_{L^p\to L^p}$, $\|T\|_{L^{\pone ,\infty} \to L^\infty}$, and the constant $k$ from the definition of the quasi-linearity. In other words, (i) implies (iii).

Next, we prove the reverse implication. Assume that~\eqref{prop:joint_sawyerable_vs_sep_sawyerable:K} is valid. Using the well-known equivalent expression for the $K$-functional between $L^p$ and $L^\infty$ (e.g., see~\cite[Theorem~5.2.1]{BL}), we have
    \begin{equation}\label{prop:joint_sawyerable_vs_sep_sawyerable:eq1}
        \Bigg( \int_0^{t^p} (Tf)_\nu^*(s)^p \ds \Bigg)^\frac1{p} \lesssim K(f, t; L^p, L^{\pone , \infty}) \quad \text{for every $t\in(0,\infty)$}
    \end{equation}
    and every $f\in (L^{p} + L^{\pone , \infty})(\RR, \mu)$. Here the multiplicative constant depends only on $C$ from \eqref{prop:joint_sawyerable_vs_sep_sawyerable:K}, $p$ and $\pone$. The trivial decomposition $f=f+0$ shows that
    \begin{equation*}
        K(f, t; L^p, L^{\pone , \infty})
        \le 
        \|f\|_{L^{p}(\RR, \mu)}
        \quad\text{for every $t\in(0,\infty)$.}
    \end{equation*}
    Thus, letting $t\to\infty$ in \eqref{prop:joint_sawyerable_vs_sep_sawyerable:eq1}, we obtain
    \begin{align*}
        \|Tf\|_{L^p(\cS,\nu)} &
        \lesssim 
        \|f\|_{L^{p}(\RR, \mu)}
    \end{align*}
    for every $f\in L^p(\RR,\mu)$. In other words, $T\colon L^p(\RR,\mu) \to L^p(\cS,\nu)$. Similarly, the decomposition $f=0+f$ leads to
    \begin{equation*}
        K(f, t; L^p, L^{\pone , \infty})
        \le 
        t\|f\|_{L^{\pone , \infty}(\RR, \mu)}
        \quad\text{for every $t\in(0,\infty)$.}
    \end{equation*}
    Consequently, dividing \eqref{prop:joint_sawyerable_vs_sep_sawyerable:eq1} by $t$ and letting $t\to 0^+$, we obtain
    \begin{align*}
        \|Tf\|_{L^\infty(\cS,\nu)} 
        &
        \lesssim 
        \|f\|_{L^{\pone , \infty}(\RR, \mu)}
    \end{align*}
    for every $f\in L^{\pone , \infty}(\RR,\mu)$. In other words, $T\colon L^{\pone , \infty}(\RR,\mu) \to L^\infty(\cS,\nu)$. Altogether, we have shown that (iii) implies (i).

Finally, we show that the statements (ii) and (iii) are equivalent, which will finish the proof. Observing that \eqref{def:sawyerable_op:def_of_p} can be expressed as $\frac1{p} - \frac1{\pone } = \frac1{p \ptwo }$, and using the well-known equivalent expressions for the $K$-functionals between Lorentz/Lebesgue spaces (see~\cite[Theorem~4.2]{H:70}), it is easy to see that \eqref{prop:joint_sawyerable_vs_sep_sawyerable:K} is valid if and only if
\begin{equation}
\label{prop:joint_sawyerable_vs_sep_sawyerable:K+Holm}
        \Bigg( \int_0^{t^p} (Tf)_\nu^*(s)^p \ds \Bigg)^\frac1{p} \lesssim \Bigg( \int_0^{t^{p \ptwo }} f_\mu^*(s)^p \ds \Bigg)^\frac1{p} + t\sup_{s\in[t^{p \ptwo }, \infty)} s^\frac1{\pone } f_\mu^*(s)
    \end{equation}
    for every $t\in(0, \infty)$ and every $f\in (L^{p} + L^{\pone , \infty})(\RR, \mu)$. The multiplicative constant in \eqref{prop:joint_sawyerable_vs_sep_sawyerable:K+Holm} depends only on that in \eqref{prop:joint_sawyerable_vs_sep_sawyerable:K}, $p$ and $\pone$. Multiplying \eqref{prop:joint_sawyerable_vs_sep_sawyerable:K+Holm} by $t^{-1}$ and using the definition of $(Tf)_\nu^{**}$ and of $R_{p,\pone}f^*_\mu$,
    we obtain
    \begin{equation}
    \label{prop:joint_sawyerable_vs_sep_sawyerable:eq2}
        \big( |Tf|^p \big)_\nu^{**} (t^p)^\frac1{p} \lesssim R_{p,\pone}f^*_\mu(t^p) + \sup_{s\in[t^{p \ptwo }, \infty)} s^\frac1{\pone } f_\mu^*(s)
    \end{equation}
    for every $t\in(0, \infty)$ and every $f\in (L^{p} + L^{\pone , \infty})(\RR, \mu)$.
    The simple change of variables $t^p\mapsto t$ shows that~\eqref{prop:joint_sawyerable_vs_sep_sawyerable:eq2} is equivalent to~\eqref{prop:joint_sawyerable_vs_sep_sawyerable:pointwise},
    and establishes thereby the implication (iii)$\Rightarrow$(ii).
    
    In order to prove the converse implication, assume that \eqref{prop:joint_sawyerable_vs_sep_sawyerable:pointwise} is valid. Then the above analysis shows that \eqref{prop:joint_sawyerable_vs_sep_sawyerable:eq2}, and hence also~\eqref{prop:joint_sawyerable_vs_sep_sawyerable:K+Holm}, holds for every $f\in (L^{p} + L^{\pone , \infty})(\RR, \mu)$ and every $t\in(0, \nu(\cS))$. Thus, if $\nu(\cS) = \infty$, then \eqref{prop:joint_sawyerable_vs_sep_sawyerable:K} immediately follows. When $\nu(\cS) < \infty$, \eqref{prop:joint_sawyerable_vs_sep_sawyerable:pointwise}~only implies that \eqref{prop:joint_sawyerable_vs_sep_sawyerable:eq2} is true for every $t\in(0, \nu(\cS)^{\frac1{p}}]$, and, consequently, so is \eqref{prop:joint_sawyerable_vs_sep_sawyerable:K}. However, since $(L^p + L^\infty)(\cS, \nu) = L^p(\cS, \nu)$ provided that $\nu(\cS) < \infty$, it is not hard to see that
    	\begin{align*}
			K(Tf, t; L^p, L^\infty) &\approx K(Tf, \nu(\cS)^{\frac1{p}}; L^p, L^\infty) \lesssim K(f, \nu(\cS)^{\frac1{p}}; L^p, L^{\pone , \infty}) 
                \\
			& \leq K(f, t; L^p, L^{\pone , \infty})
		\end{align*}
	for every $t > \nu(\cS)^{\frac1{p}}$ and every $f\in (L^{p} + L^{\pone , \infty})(\RR, \mu)$. This, once again, establishes the validity of~\eqref{prop:joint_sawyerable_vs_sep_sawyerable:K}. Hence, putting everything together, we see that the statements (ii) and (iii) are indeed equivalent. The proof is complete.
\end{proof}

\begin{remark}\ 
\begin{enumerate}[(i)]
\item The statement that $f\in (L^{p} + L^{\pone , \infty})(\RR, \mu)$ is equivalent to the condition $R_{p,\pone}f^*_\mu(1) + \sup_{s\in[1, \infty)} s^{\frac1{\pone }}f_\mu^*(s)<\infty$. Furthermore, when $\mu(\RR) < \infty$, the sum $(L^{p} + L^{\pone , \infty})(\RR, \mu)$ coincides with $L^p(\RR, \mu)$, up to equivalent norms, and $f\in (L^{p} + L^{\pone , \infty})(\RR, \mu) = L^p(\RR, \mu)$ is equivalent to $R_{p,\pone}f^*_\mu(1)<\infty$.
\item We could replace the pointwise inequality~\eqref{prop:joint_sawyerable_vs_sep_sawyerable:pointwise} with a seemingly more general inequality:
\begin{equation*}
     \big( |Tf|^p \big)_\nu^{**} (ct)^\frac1{p} \leq C 
    \Big( R_{p,\pone}f_\mu^*(t) + \sup_{s\in[t^{\ptwo }, \infty)} s^\frac1{\pone } f_\mu^*(s) \Big) \quad \text{for every $t\in(0, \nu(\cS))$},
\end{equation*}
where $c>0$ is another constant independent of both $f$ and $t$. However, if this holds with some $c>0$, so it does with $c = 1$ and a possibly different constant $C>0$. This follows from the observation that
\begin{equation*}
    \big( |Tf|^p \big)_\nu^{**} (ct)\geq \min\{1, c^{-1}\}\big( |Tf|^p \big)_\nu^{**} (t) \quad \text{for every $t\in(0,\infty)$}.
\end{equation*}
Therefore, the choice of $c=1$ is without any loss of generality.
\end{enumerate}
\end{remark}

The following proposition tells us that the supremum operator in the pointwise estimate \eqref{prop:joint_sawyerable_vs_sep_sawyerable:pointwise} is in fact essentially immaterial for rearrangement-invariant norm inequalities. In other words, it basically explains why sawyerable operators are governed only by the operator $R_{p,\pone}$. Interestingly, a similar phenomenon was observed in connection with a class of operators with completely different nonstandard endpoint behavior in~\cite[Theorem~1.2]{GP-Indiana:09}.

\begin{proposition}\label{pro:supremum_operator_is_unnecessary}
    Assume that $p,\pone$ satisfy~\eqref{E:pq} and let $r$ be defined by~\eqref{def:sawyerable_op:def_of_p}. There is a constant $C>0$ depending only on $p$ and $\pone$ such that
    \begin{equation*}
        \left\| \sup_{s\in[t^{\ptwo }, a^{\ptwo })} s^\frac1{\pone } h^*(s) \right\|_{Z(0, a)} \leq C \|R_{p,\pone}h\|_{Z(0, a)} \quad \text{for every $h\in\M(0, \infty)$},
    \end{equation*}
		for every $a\in(0, \infty]$, and for every rearrangement-invariant function space $Z(0, a)$.
\end{proposition}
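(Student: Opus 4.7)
The plan is to obtain a distribution-function comparison between the two sides of the inequality and then convert it into the desired norm bound via dilation boundedness on rearrangement-invariant spaces. Set $\psi(u) = u^{1/\pone} h^*(u)$, so that the left-hand function is $V(t) = \sup_{u \in [t^{\ptwo}, a^{\ptwo})} \psi(u)$. Since $h^*$ is nonincreasing, $\int_0^u h^*(r)^p \intdif{r} \geq u h^*(u)^p$ for all $u > 0$; multiplying by $u^{p/\pone - 1}$ and using the identity $p/\pone - 1 = -1/\ptwo$, one obtains the pointwise inequality $\psi(u) \le R_{p,\pone}h(u^{1/\ptwo})$ for every $u > 0$ (after the substitution $s = u^{1/\ptwo}$ on the right-hand side).

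The key step is a ``spread'' lemma exploiting monotonicity of $h^*$ a second time. Fix $K > 1$. If $\psi(u^*) > \lambda$, then $h^*(u) \ge h^*(u^*)$ for $u \le u^*$, and hence
\begin{equation*}
\psi(u) = u^{1/\pone} h^*(u) \ge (u/u^*)^{1/\pone} \lambda \ge K^{-1}\lambda
\qquad\text{for every $u \in [u^*/K^{\pone},\,u^*]$.}
\end{equation*}
Combining with the first pointwise inequality and substituting $s = u^{1/\ptwo}$ (for which the exponent becomes $\pone/\ptwo = \pone - p$), the entire interval $[u^{*1/\ptwo}/K^{\pone - p},\,u^{*1/\ptwo}]$ lies in $\{s \in (0,a) : R_{p,\pone}h(s) > \lambda/K\}$ and has measure $u^{*1/\ptwo}(1 - K^{-(\pone - p)})$. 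Letting $u^* \to u_\lambda := \sup\{u < a^{\ptwo}: \psi(u) > \lambda\}$ from within $\{\psi > \lambda\}$, and observing that $|\{V > \lambda\}| = u_\lambda^{1/\ptwo}$ (since $V(t) > \lambda$ iff $t < u_\lambda^{1/\ptwo}$), we obtain the distribution-function comparison
\begin{equation*}
|\{V > \lambda\}| \le M \bigl|\{R_{p,\pone}h > \lambda/K\} \cap (0,a)\bigr|,\qquad M := \bigl(1 - K^{-(\pone - p)}\bigr)^{-1}.
\end{equation*}

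By standard manipulation of generalized inverses, this is equivalent to the pointwise rearrangement bound $V^*(s) \le K (R_{p,\pone}h)^*(s/M)$ for all $s$. Since $V$ is nonincreasing, $V = V^*$; invoking the classical dilation bound $\|f(\cdot/M)\|_{\bar Z(0,a)} \le \max(1,M)\|f\|_{\bar Z(0,a)}$ on rearrangement-invariant function spaces (see, e.g., \cite[Chapter~3, Proposition~5.11]{BS}), we conclude
\begin{equation*}
\|V\|_{Z(0,a)} = \|V^*\|_{\bar Z(0,a)} \le K\,\|(R_{p,\pone}h)^*(\cdot/M)\|_{\bar Z(0,a)} \le KM\,\|R_{p,\pone}h\|_{Z(0,a)}.
\end{equation*}
Choosing $K=2$ gives the claim with $C = 2/(1 - 2^{-(\pone - p)})$, a constant depending only on $p$ and $\pone$.

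The principal obstacle is the spread lemma: one must use monotonicity of $h^*$ in a quantitative way to obtain an interval (of length proportional to $u^{*1/\ptwo}$, not just a single point) on which $R_{p,\pone}h$ remains comparable to $\lambda$. Both pointwise ``majorant'' bounds of the form $V \lesssim R_{p,\pone}h$ and integral bounds $V^{**} \lesssim (R_{p,\pone}h)^{**}$ fail in general, so a genuine level-set argument is required; the crucial feature that the gap $1 - K^{-(\pone - p)}$ is strictly positive, and hence $M$ is finite, relies precisely on the standing assumption $\pone > p$.
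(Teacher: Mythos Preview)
Your proof is correct and takes a genuinely different route from the paper's. The paper proceeds by first noting the pointwise bound $t^{\ptwo/\pone}h^*(t^{\ptwo})\le R_{p,\pone}h(t)$ (your Step~1), then invoking an external lemma \cite[Lemma~3.1(ii)]{GP-Indiana:09} to obtain the Hardy--Littlewood--P\'olya comparison
\[
\int_0^t V(s)\,ds \;\lesssim\; \int_0^t \big(\chi_{(0,a)}R_{p,\pone}h\big)^*(s)\,ds,
\]
and finally applying the HLP principle. You instead establish a distribution-function inequality $|\{V>\lambda\}|\le M\,|\{R_{p,\pone}h>\lambda/K\}\cap(0,a)|$ directly via your ``spread lemma'' and close with dilation boundedness. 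Your argument is entirely self-contained (no appeal to \cite{GP-Indiana:09}) and yields an explicit constant; the paper's approach is shorter once the cited lemma is granted, and shows that the stronger majorization $V^{**}\lesssim (R_{p,\pone}h\,\chi_{(0,a)})^{**}$ actually holds.

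That last point means your closing commentary contains an error: you assert that integral bounds of the form $V^{**}\lesssim (R_{p,\pone}h)^{**}$ ``fail in general,'' but the paper's proof establishes precisely this majorization. Your claim that the \emph{pointwise} bound $V\lesssim R_{p,\pone}h$ fails is correct (take $h^*=\chi_{(0,1)}$ and let $t\to0^+$), and this is indeed why some level-set or integral argument is required; but the $(**)$-comparison does hold, so you should delete or amend that sentence. This is only an issue with the expository remark --- the proof itself stands.
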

\begin{proof}
Fix $h\in\M(0, \infty)$. Notice that
\begin{equation*}
R_{p,\pone} h (t) = \big( t^{\ptwo  - 1} (|h|^p)^{**}(t^{\ptwo }) \big)^\frac1{p} \quad \text{for every $t\in(0, \infty)$},
\end{equation*}
and
\begin{equation}\label{pro:supremum_operator_is_unnecessary:eq1}
     \big( t^{\ptwo  - 1} (|h|^p)^{**}(t^{\ptwo }) \big)^\frac1{p}
     \geq \big( t^{\ptwo  - 1} h^*(t^{\ptwo })^p \big)^\frac1{p}
     =
     t^{\frac{\ptwo }{\pone }}h^*(t^{\ptwo }) \quad \text{for every $t\in(0,\infty)$}
\end{equation}
thanks to the relation $(|h|^p)^{**}\geq (|h|^p)^*=(h^*)^p$ and \eqref{def:sawyerable_op:def_of_p}. Owing to~\cite[Lemma~3.1(ii)]{GP-Indiana:09} (with $\beta = 0$ and $\alpha=\frac{\ptwo}{\pone} $ in their notation), we have
\begin{equation*}
    \int_0^t \sup_{\tau\in[s^{\ptwo }, a^{\ptwo })} \tau^\frac1{\pone } h^*(\tau) \ds \lesssim \int_0^t \big(\chi_{(0,a)}(\tau)\tau^\frac{\ptwo }{\pone } h^*(\tau^{\ptwo }) \big)^*(s) \ds \quad \text{for every $t\in(0,\infty)$},
\end{equation*}
in which the multiplicative constant depends only on $p$ and $\pone$. Combining this with \eqref{pro:supremum_operator_is_unnecessary:eq1}, we obtain
\begin{equation*}
	\int_0^t \sup_{\tau\in[s^{\ptwo }, a^{\ptwo })} \tau^\frac1{\pone } h^*(\tau) \ds \lesssim \int_0^t \big( \chi_{(0,a)} R_{p,\pone}h \big)^*(s) \ds \quad \text{for every $t\in(0,\infty)$}.
\end{equation*}
Hence, it follows from the so-called Hardy--Littlewood--P\'{o}lya principle (see~\cite[Chapter~2, Theorem~4.6]{BS}) and the monotonicity of the function
\begin{equation*}
    (0, \infty)\ni t\mapsto \chi_{(0,a)}(t)\sup_{s\in[t^{\ptwo }, a^{\ptwo })} s^\frac1{\pone } h^*(s)
\end{equation*}
that
\begin{equation*}
\left\| \sup_{s\in[t^{\ptwo }, a^{\ptwo })} s^\frac1{\pone } h^*(s) \right\|_{Z(0, a)} \lesssim \| R_{p,\pone}h\|_{Z(0, a)}.\qedhere
\end{equation*}
\end{proof}

It is important to notice that the operator $R_{p,\pone}$ itself is not necessarily $(p,\pone)$-sawyerable. More precisely, it satisfies an essentially weaker endpoint estimate than \eqref{saw_op_p_p_endpoint}. This fact is the content of our next proposition. At the same time, this is precisely the stage of our analysis at which the $(\cdot)^{\langle p\rangle}$-operation on function spaces comes into play, improving the boundedness properties of $(p,\pone)$-sawyerable operators.
\begin{proposition}\label{prop:endpoint_estimates_for_R}
Assume that $p,\pone$ satisfy~\eqref{E:pq} and let $r$ be defined by~\eqref{def:sawyerable_op:def_of_p}. The operator $R_{p,\pone}$ defined by \eqref{def:sawyerable_op:def_of_R} is sublinear, and it is bounded from $L^p(0, \infty)$ and $L^{\pone , \infty}(0, \infty)$ to $L^{p, \infty}(0, \infty)$ and $L^\infty(0, \infty)$, respectively.
\end{proposition}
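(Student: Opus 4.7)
The plan is to obtain the proposition by a direct computation, once one identifies the right interpretation of the integral defining $R_{p,\pone}$.

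For sublinearity, I would start from the identification, valid for any $h\in\M(0,\infty)$ and any $a>0$,
\begin{equation*}
\int_0^{a} h^*(s)^p \ds = a(|h|^p)^{**}(a) = \sup_{|E|=a}\int_E |h|^p \ds,
\end{equation*}
which follows from $(|h|^p)^* = (h^*)^p$ together with~\eqref{prel:double_star_alt_expr}. Applying this with $a = t^{\ptwo}$ and $h = f+g$, and then using Minkowski's inequality in $L^p(E)$ for each admissible $E\subseteq(0,\infty)$, would yield
\begin{equation*}
\left(\int_0^{t^{\ptwo}}(f+g)^*(s)^p\ds\right)^{\frac1p} \le \left(\int_0^{t^{\ptwo}}f^*(s)^p\ds\right)^{\frac1p} + \left(\int_0^{t^{\ptwo}}g^*(s)^p\ds\right)^{\frac1p}.
\end{equation*}
Division by $t^{1/p}$ then gives $R_{p,\pone}(f+g)\le R_{p,\pone}f + R_{p,\pone}g$; positive homogeneity is immediate from the definition, and together they establish sublinearity.

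The bound $R_{p,\pone}\colon L^p(0,\infty)\to L^{p,\infty}(0,\infty)$ is direct: extending the interval of integration gives $R_{p,\pone}g(t)\le t^{-1/p}\|g\|_{L^p(0,\infty)}$, i.e.~$\sup_{t>0} t^{1/p} R_{p,\pone}g(t) \le \|g\|_{L^p(0,\infty)}$. For $R_{p,\pone}\colon L^{\pone,\infty}(0,\infty)\to L^\infty(0,\infty)$, I would insert the pointwise estimate $g^*(s)\le s^{-1/\pone}\|g\|_{L^{\pone,\infty}(0,\infty)}$ into the definition of $R_{p,\pone}g(t)$ and evaluate the resulting power integral $\int_0^{t^{\ptwo}} s^{-p/\pone}\ds$. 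This integral converges at the origin because $p<\pone$, and the decisive algebraic identity $\ptwo(1-p/\pone)=1$\textemdash which is just the defining relation~\eqref{def:sawyerable_op:def_of_p} rewritten\textemdash forces the resulting power of $t$ to cancel the $1/t$ in front of the integral exactly. This yields a bound $R_{p,\pone}g(t)\le C(p,\pone)\|g\|_{L^{\pone,\infty}(0,\infty)}$ uniform in $t$.

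There is no substantive obstacle in this argument; the only point requiring attention is verifying that the exponents balance correctly in the second endpoint estimate, which is precisely what the definition of $\ptwo$ is designed to ensure.
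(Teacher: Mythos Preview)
Your proposal is correct and follows essentially the same approach as the paper. The sublinearity argument is identical (via \eqref{prel:double_star_alt_expr} and Minkowski on each admissible $E$); for the two endpoint bounds your arguments are in fact slightly more direct than the paper's\textemdash the paper passes through $(R_{p,\pone}f)^*$ and, for the $L^{\pone,\infty}\to L^\infty$ estimate, invokes the Hardy inequality \eqref{prel:two_stars_on_Lorentz_spaces} for $(\cdot)^{**}$ on $L^{\ptwo',\infty}$ together with $p\ptwo'=\pone$, whereas you simply integrate the power $s^{-p/\pone}$ and use $\ptwo(1-p/\pone)=1$.
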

\begin{proof}
    We clearly have
    \begin{equation*}
       |R_{p,\pone}(\alpha f)|  = R_{p,\pone}(\alpha f) = |\alpha| R_{p,\pone}f = |\alpha| |R_{p,\pone}f|
    \end{equation*}
    for every $f\in\M(0, \infty)$ and every $\alpha\in\R$ thanks to the positive homogeneity of the operation $f\mapsto f^{**}$. As for the subadditivity, using \eqref{prel:double_star_alt_expr}, we observe that
    \begin{align*}
        R_{p,\pone}(f+g)(t) &= t^{-\frac1{p}} \sup_{\substack{|E| = t^{\ptwo }\\E\subseteq(0, \infty)}} \left(\int_E |f + g|^p \right)^{\frac1{p}} \\
        &\leq t^{-\frac1{p}}\sup_{\substack{|E| = t^{\ptwo }\\E\subseteq(0, \infty)}} \left(\int_E |f|^p \right)^{\frac1{p}} + t^{-\frac1{p}}\sup_{\substack{|E| = t^{\ptwo }\\E\subseteq(0, \infty)}} \left(\int_E |g|^p \right)^{\frac1{p}} \\
        &= R_{p,\pone}f(t) + R_{p,\pone}g(t)
    \end{align*}
    for every $t\in(0,\infty)$ and all $f,g\in\M_0(0, \infty)$. Next, note that
    \begin{align*}
        \|R_{p,\pone}f\|_{L^{p, \infty}(0, \infty)}^p 
        &= \sup_{t\in(0, \infty)}t (R_{p,\pone}f)^*(t)^p
            \leq \sup_{t\in(0, \infty)} t \sup_{s\in[t, \infty)} s^{\ptwo -1} (|f|^p)^{**}(s^{\ptwo }) 
            \\
        &\leq \sup_{t\in(0, \infty)} \sup_{s\in[t, \infty)} s^{\ptwo } (|f|^p)^{**}(s^{\ptwo }) 
            = \int_0^\infty (f^*)^p = \|f\|_{L^p(0, \infty)}^p
    \end{align*}
    for every $f\in L^p(0, \infty)$. Finally, using~\eqref{E:pq}, \eqref{def:sawyerable_op:def_of_p} and \eqref{prel:two_stars_on_Lorentz_spaces}, we obtain
    \begin{align*}
        \|R_{p,\pone}f\|_{L^{\infty}(0, \infty)}^p 
        &= 
        \sup_{t\in(0, \infty)} t^{\ptwo  - 1} (|f|^p)^{**}(t^{\ptwo }) = \sup_{t\in(0, \infty)} t^{\frac1{\ptwo '}} (|f|^p)^{**}(t) 
        \\
        &= \|(|f|^p)^{**}\|_{L^{\ptwo ', \infty}(0, \infty)} \lesssim \||f|^p\|_{L^{\ptwo ', \infty}(0, \infty)} 
            \\
        &= \|f\|_{L^{p\ptwo ', \infty}(0, \infty)}^p = \|f\|_{L^{\pone , \infty}(0, \infty)}^p
    \end{align*}
    for every $f\in L^{\pone , \infty}(0, \infty)$.
\end{proof}

We now turn our attention to specializing Theorems~\ref{thm:calderon_thm_for_sawyerable_op} and \ref{T:general-principle}. We start with customizing Theorem~\ref{T:general-principle} to the generalized potential $I_\alpha^\mu$ defined by~\eqref{generalized_potential_def}, which will serve as the ``bad operator'' $B$ in \eqref{E:pointwise-general}. For this ``bad operator'', the ``good operator'' $G$ is a suitable fractional maximal function (or rather, a suitable family of fractional maximal functions).

For a cube $Q_0\subseteq \mathbb{R}^n$, let $\mathcal{D}(Q_0)=\{ 2^{-k} (\mathbf{n}+Q_0), k \in \mathbb{Z}, \mathbf{n} \in l(Q_0)R(\mathbb{Z}^n)\}$ be the dyadic lattice generated by $Q_0$ (where $R \in SO(n)$ is a rotation which takes the standard basis of $\mathbb{Z}^n$ to a canonical basis one can associate with $Q_0$), and $\mathcal{D}_\tau(Q_0)$ denote the set of $3^n$ translates of this lattice by~$1/3$.  For $\beta \in [0,d)$, $d\in(0,n]$, we define the dyadic fractional maximal function
\begin{align}\label{generalized_maximal_def}
\mathcal{M}^{\mu,Q_0,\tau}_{\beta} f(x)= \sup_{Q \in \mathcal{D}_\tau(Q_0)} \chi_Q(x) \mu(Q)^{\beta/d-1}\int_Q |f(y)|\intdif{\mu(y)},\ x\in\rn,
\end{align}
where $\mu$ is a Radon measure on $\rn$ satisfying \eqref{E:mainresult-prime:d_upper_ahlfors}.

The following lemma connects $I_\alpha^\mu$ and $\mathcal{M}^{\mu,Q_0,\tau}_{\beta}$ in the spirit of \eqref{E:pointwise-general} (cf.~\cite[Proposition~3.1.2]{AH}). What is also important to note here is that nonfractional maximal functions are usually uniformly pointwise bounded over bounded subsets of $L^\infty$. In particular, this is the case for $\mathcal{M}^{\mu,Q_0,\tau}_{0}$ and the set of characteristic functions on $\rn$. 
\begin{lemma}\label{bigger_lemma}
Let $Q_0 \subseteq \mathbb{R}^n$ be a cube and suppose $\mu$ is a Radon measure on $\rn$ satisfying \eqref{E:mainresult-prime:d_upper_ahlfors}. For $\beta \in (0,d)$ and $\theta \in (0,1)$, there exists a constant $C=C(\beta,\theta,n)>0$ such that 
\begin{align*}
|I^\mu_{\theta \beta} f(x) | \leq C \mathcal{M}^{\mu,Q_0,\tau}_{\beta} f(x)^\theta \mathcal{M}^{\mu,Q_0,\tau}_{0}f(x)^{1-\theta} \quad \text{for every $x\in\rn$}.
\end{align*}
\end{lemma}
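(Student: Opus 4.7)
The plan is to adapt the classical Hedberg-type pointwise inequality \eqref{interpolation-Riesz} to the measure-theoretic dyadic setting of $I_\alpha^\mu$ and $\mathcal{M}^{\mu,Q_0,\tau}_\beta$. The argument has three main steps.

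First, I would prove an auxiliary estimate comparing the $\mu$-measure of small balls with the maximal function: for every $x\in\mathbb{R}^n$ and every $r>0$ there exists a cube $Q\in\mathcal{D}_\tau(Q_0)$ with $x\in Q$, $B(x,r)\subseteq Q$, and $\ell(Q)\leq c(n)r$. This is the classical ``one-third trick,'' which is precisely what motivates passing to the family of $3^n$ shifted dyadic lattices. Combining such a $Q$ with the upper $d$-Ahlfors bound~\eqref{E:mainresult-prime:d_upper_ahlfors} gives $\mu(Q)\lesssim r^d$, and then for any $\beta\in[0,d)$,
\[
\int_{B(x,r)} |f(y)|\intdif{\mu(y)}\leq \int_Q |f(y)|\intdif{\mu(y)}=\mu(Q)^{1-\beta/d}\cdot\mu(Q)^{\beta/d-1}\int_Q |f(y)|\intdif{\mu(y)}\leq C\,r^{d-\beta}\,\mathcal{M}^{\mu,Q_0,\tau}_\beta f(x).
\]

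Second, for an arbitrary $\delta>0$ I would split
\[
|I^\mu_{\theta\beta}f(x)|\leq \int_{|x-y|<\delta}\frac{|f(y)|}{|x-y|^{d-\theta\beta}}\intdif{\mu(y)}+\int_{|x-y|\geq\delta}\frac{|f(y)|}{|x-y|^{d-\theta\beta}}\intdif{\mu(y)}
\]
and decompose each piece into dyadic annuli: $\{2^{-k-1}\delta\leq|x-y|<2^{-k}\delta\}$ for $k\geq0$ in the near part, and $\{2^{k}\delta\leq|x-y|<2^{k+1}\delta\}$ for $k\geq0$ in the far part. On each annulus the kernel is controlled by its largest value, and the mass of the surrounding ball is controlled by the estimate from Step~1 with $\beta=0$ for the near piece and with $\beta$ itself for the far piece. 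The resulting geometric series converge because $\theta\beta>0$ (for the near part) and $\theta\beta-\beta<0$ (for the far part), yielding
\[
|I^\mu_{\theta\beta}f(x)|\leq C\bigl(\delta^{\theta\beta}\,\mathcal{M}^{\mu,Q_0,\tau}_0 f(x)+\delta^{\theta\beta-\beta}\,\mathcal{M}^{\mu,Q_0,\tau}_\beta f(x)\bigr).
\]

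Third, I would optimize over $\delta>0$. Choosing $\delta^{\beta}=\mathcal{M}^{\mu,Q_0,\tau}_\beta f(x)/\mathcal{M}^{\mu,Q_0,\tau}_0 f(x)$ balances the two terms and produces the claimed bound $C\,\mathcal{M}^{\mu,Q_0,\tau}_\beta f(x)^\theta\,\mathcal{M}^{\mu,Q_0,\tau}_0 f(x)^{1-\theta}$. Degenerate cases (one of the maximal values being $0$ or $+\infty$) are handled by sending $\delta$ to $0$ or to $\infty$.

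The only subtle point is the covering step: one must verify that the $3^n$ shifted lattices comprising $\mathcal{D}_\tau(Q_0)$ are set up so that \emph{every} Euclidean ball is contained in some cube of the family with comparable sidelength containing its center—this is what makes the bound in Step~1 uniform in $r$ and allows the annular decomposition in Step~2 to go through. Once that covering fact is in hand, the remainder is a routine Hedberg-style optimization and the dependence of $C$ on $\beta$, $\theta$, $n$ (and also on the Ahlfors constant of $\mu$) is explicit.
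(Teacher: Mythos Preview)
Your proposal is correct and follows essentially the same Hedberg-type argument as the paper: split at a scale $\delta$ (the paper calls it $r$), decompose each piece into dyadic annuli, invoke the $1/3$ trick to pass from balls to cubes in $\mathcal{D}_\tau(Q_0)$, use the Ahlfors bound $\mu(Q)\lesssim \ell(Q)^d$, sum the resulting geometric series, and optimize. The only cosmetic difference is that you isolate the ball-to-maximal-function comparison as a separate preliminary step, whereas the paper carries it out inline within the annular sums.
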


\begin{proof}
For any $r>0$ one has
\begin{align*}
|I_{\theta \beta}^\mu f(x)| &\leq \int_{B(x,r)} \frac{|f(y)|}{|x-y|^{d-\theta \beta}}\intdif{\mu(y)} + \int_{\rn\setminus B(x,r)} \frac{|f(y)|}{|x-y|^{d-\theta \beta}}\intdif{\mu(y)} \\
&=: I+II.
\end{align*}
Dyadic annular expansion on $I$ yields
\begin{align*}
I &\leq \sum_{j=0}^\infty (2^{-j-1}r)^{\theta \beta-d} \int_{B(x,2^{-j}r)\setminus B(x,2^{-j-1}r)}|f(y)| \intdif{\mu(y)} \\
&\leq \sum_{j=0}^\infty (2^{-j-1}r)^{\theta \beta-d} \int_{B(x,2^{-j}r)}|f(y)| \intdif{\mu(y)}.
\end{align*}
One then uses the $1/3$ trick, that $B(x,2^{-j}r) \subseteq 3Q$ for some cube $Q \in \mathcal{D}_\tau(Q_0)$ with $l(Q) \approx 2^{-j}r$, two sided comparable.  For each $j$ we set $Q_j=3Q$, which yields
\begin{align*}
I &\leq \sum_{j=0}^\infty (2^{-j-1}r)^{\theta \beta-d} \int_{Q_j}|f(y)| \intdif{\mu(y)} \\
&= \sum_{j=0}^\infty (2^{-j-1}r)^{\theta \beta-d} \frac{\mu(Q_j)}{\mu(Q_j)} \int_{Q_j}|f(y)| \intdif{\mu(y)} \\
 &\lesssim \sum_{j=0}^\infty (2^{-j-1}r)^{\theta \beta-d} 
 (2^{-j}r)^d \mathcal{M}^{\mu,Q_0,\tau}_{0}f(x)\\
 &\leq Cr^{\theta \beta} \mathcal{M}^{\mu,Q_0,\tau}_{0}f(x).
\end{align*}
A similar argument applies to $II$:
\begin{align*}
II&\leq \sum_{j=0}^\infty (2^{j}r)^{\theta \beta-d} \int_{B(x,2^{j+1}r)\setminus B(x,2^{j}r)}|f(y)| \intdif{\mu(y)} \\
&\leq \sum_{j=0}^\infty (2^{j}r)^{\theta \beta-d} \int_{B(x,2^{j+1}r)}|f(y)| \intdif{\mu(y)}\\
&\leq \sum_{j=0}^\infty (2^{j}r)^{\theta \beta-d}  \frac{\mu(Q_j)^{1-\beta/d}}{\mu(Q_j)^{1-\beta/d}}\int_{Q_j}|f(y)| \intdif{\mu(y)}\\
 &\leq Cr^{\theta \beta-\beta} \mathcal{M}^{\mu,Q_0,\tau}_{\beta}f(x).
\end{align*}
While the desired inequality now follows from optimization, e.g. the choice
\begin{equation*}
r^{- \beta} = \frac{\mathcal{M}^{\mu,Q_0,\tau}_{0}f(x)}{ \mathcal{M}^{\mu,Q_0,\tau}_{\beta}f(x)}.\qedhere
\end{equation*}
\end{proof}

\begin{remark}
Note that in the proof above, one uses the structure of $\mathbb{R}^n$ to find canonical dyadic cubes which contain any ball, after which one only needs the polynomial bound on the growth of the measure $\mu$, $\mu(Q) \lesssim l(Q)^d$.
\end{remark}

Equipped with the lemma, we are in a position to prove a general boundedness result for $I_\alpha^\mu$, of which Theorem~\ref{mainresult-prime} is a special case.
\begin{theorem}\label{thm:most_general_KK}
Let $0<d \leq n$, $\alpha \in (0,d)$, and $1<p<\frac{d}{\alpha}$. Let $\mu$ be a Radon measure on $\rn$ such that
\begin{equation*}
\sup_{Q} \frac{\mu(Q)}{l(Q)^{d}} < \infty.
\end{equation*}
For a cube $Q_0\subseteq \rn$, let $\{\mathcal D_j\}_{j = 1}^{3^n}$ be an enumeration of $\mathcal D_\tau(Q_0)$, the previously defined set of $3^n$ translates of $\mathcal{D}(Q_0)$ by $\frac1{3}$. Assume that $\nu$ is a Radon measure on $\rn$ such that
\begin{equation*}
       \sup_{\mu(Q)>0} \frac{\nu(Q)}{\mu(Q)^{1-\frac{\alpha p}{d}}} < \infty,
\end{equation*}
where the supremum extends over all $Q\in\bigcup_{j = 1}^{3^n} \mathcal D_j$ with $\mu(Q)>0$.

If $X(\rn, \mu)\subseteq (L^{p} + L^{\frac{d}{\alpha}, \infty})(\rn,\mu)$ and $Y(\rn, \nu)$ are rearrangement-invariant function spaces such that \eqref{T:general-principle:fundX_vanishes} is satisfied and that $R_{p,\frac{d}{\alpha}}\colon \bar X(0, \mu(\rn)) \to \bar Y(0, \nu(\rn))$, then
\begin{equation}\label{thm:most_general_KK:potential_bdd}
        I_\alpha^\mu\colon \Lambda_X(\rn, \mu) \to Y^{\langle p \rangle}(\rn, \nu).
\end{equation}
\end{theorem}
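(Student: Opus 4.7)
The plan is to apply Theorem~\ref{T:general-principle} with the ``bad'' operator $B = I_\alpha^\mu$ and a ``good'' operator $G$ built from the dyadic fractional maximal functions $\mathcal{M}_\beta^{\mu, Q_0, \tau_j}$ for a suitably chosen auxiliary parameter $\beta \in (\alpha, d)$, then verify each hypothesis of that theorem.

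First, since $1 < p < d/\alpha$, I would choose $\beta \in (\alpha, \alpha p)$ and set $\lambda = \alpha/\beta \in (1/p, 1)$. Then $\lambda p > 1$, matching~\eqref{E:parameters-large-enough}, and $\lambda \cdot (d/\alpha) = d/\beta$, so the pair $(\lambda p, \lambda \cdot d/\alpha) = (\alpha p/\beta, d/\beta)$ satisfies~\eqref{E:pq} thanks to $p < d/\alpha$. I would then take
\begin{equation*}
    G f = \sum_{j=1}^{3^n} \mathcal{M}_\beta^{\mu, Q_0, \tau_j} f,
\end{equation*}
which is sublinear, hence quasilinear.

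Next I would verify that $G$ is $(\alpha p/\beta, d/\beta)$-sawyerable; this is the main technical step. The strong endpoint $G \colon L^{\alpha p/\beta}(\rn, \mu) \to L^{\alpha p/\beta}(\rn, \nu)$ follows from Sawyer's two-weight theorem for dyadic fractional maximal operators: since $\beta \cdot (\alpha p/\beta) = \alpha p$, the assumed bound $\nu(Q) \lesssim \mu(Q)^{1 - \alpha p/d}$ on cubes in $\bigcup_j \mathcal{D}_j$ is precisely the testing condition Sawyer requires for this pair of exponents. For the weak endpoint $G \colon L^{d/\beta, \infty}(\rn, \mu) \to L^\infty(\rn, \nu)$, one estimates, for any cube $Q$ with $\mu(Q) > 0$,
\begin{equation*}
    \mu(Q)^{\beta/d - 1} \int_Q |f|\intdif{\mu} \leq \mu(Q)^{\beta/d - 1} \int_0^{\mu(Q)} f^*_\mu(t)\dt \lesssim \|f\|_{L^{d/\beta, \infty}(\rn, \mu)},
\end{equation*}
where the last step uses $f^*_\mu(t) \lesssim t^{-\beta/d}\|f\|_{L^{d/\beta,\infty}(\rn,\mu)}$ and $1 - \beta/d > 0$; summing over $j$ produces the desired pointwise bound on $Gf$.

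Finally, I would establish the pointwise comparison~\eqref{E:pointwise-general}. For any $\mu$-measurable $E \subseteq \rn$ with $\mu(E) < \infty$, the trivial estimate $\mathcal{M}_0^{\mu, Q_0, \tau_j} \chi_E \leq 1$ combined with Lemma~\ref{bigger_lemma} applied with $\theta = \alpha/\beta = \lambda$ (and summation or max over the $3^n$ translates to handle the $1/3$-trick cleanly) gives
\begin{equation*}
    |I_\alpha^\mu \chi_E(x)| \lesssim \bigl(G\chi_E(x)\bigr)^\lambda
    \quad\text{for $\mu$-a.e.~$x$}.
\end{equation*}
The remaining hypotheses of Theorem~\ref{T:general-principle}---namely $X \subseteq (L^p + L^{d/\alpha, \infty})(\rn, \mu)$, $R_{p, d/\alpha} \colon \bar X \to \bar Y$, and \eqref{T:general-principle:fundX_vanishes}---are assumed outright, so the theorem yields $I_\alpha^\mu \colon \Lambda_X(\rn, \mu) \to Y^{\langle p \rangle}(\rn, \nu)$, which is~\eqref{thm:most_general_KK:potential_bdd}. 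The principal obstacle is the strong endpoint for $G$: one must invoke Sawyer's two-weight inequality in the generality of a Radon measure $\mu$ tested on the $3^n$ dyadic lattices, while the weak endpoint and the pointwise comparison are direct consequences of Lemma~\ref{bigger_lemma} and elementary rearrangement estimates.
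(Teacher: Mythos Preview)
Your proposal is correct and follows essentially the same route as the paper's proof: the paper also chooses an auxiliary parameter $\delta\in(\alpha,\alpha p)$ (your $\beta$), sets $\lambda=\alpha/\delta$, defines $G$ as the sum over the $3^n$ translates of the dyadic fractional maximal operator $M_\delta^{\mu,j}$, verifies the strong endpoint via Sawyer's two-weight theorem and the weak endpoint directly, and then invokes Lemma~\ref{bigger_lemma} together with $\mathcal{M}_0^{\mu,Q_0,\tau}\chi_E\le1$ to obtain~\eqref{E:pointwise-general} before applying Theorem~\ref{T:general-principle}. The only cosmetic difference is that the paper phrases the weak endpoint as mapping into $L^\infty(\rn,\mathcal H^0)$ (the counting measure), which of course implies the $L^\infty(\rn,\nu)$ bound you need.
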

\begin{proof}
Let us recall that $I_\alpha^\mu$ is the generalized Riesz potential defined by \eqref{generalized_potential_def}.  The fact that $p>1$ allows us to find $\delta \in (\alpha, \alpha p)$.  For this fixed $\delta>\alpha$, we define
\begin{equation*}
        Gf = \sum_{j=1}^{3^n} \mathcal{M}_{\delta}^{\mu, j} f,\ f\in\M(\rn, \mu),
\end{equation*}
where $\mathcal{M}_{\delta}^{\mu,j}$ is the weighted fractional maximal operator corresponding to the dyadic grid $\mathcal D_j$ defined as
    \begin{equation*}
        \mathcal{M}_{\delta}^{\mu,j} f(x) = \sup_{Q\in \mathcal D_j} \chi_Q(x) \mu(Q)^{\frac{\delta}{d} - 1} \int_Q |f| \intdif{\mu},\ f\in\M(\rn, \mu).
    \end{equation*}
		
Next, it is easy to see that the operator $G$ is sublinear and that
\begin{equation}\label{rem:generalized_KK:p1_bound}
        G\colon L^{\frac{d}{\delta}, \infty}(\rn, \mu) \to L^\infty(\rn, \mathcal H^0).
\end{equation}
Here $\mathcal H^0$ is the counting measure on $\rn$ and $L^\infty(\rn,\mathcal H^0)$ is the space of everywhere bounded functions. Furthermore, it is not hard to see that our assumptions on the measures $\mu$ and $\nu$ imply that
    \begin{equation*}
     \nu(Q) \mu(Q)^{\frac{\alpha p}{d}}  \lesssim \mu(Q)\quad \text{for every $Q\in \mathcal D_j$, $j=1, \dots, 3^n$}.
    \end{equation*}
    Hence, for every $j = 1, \dots, 3^n$,
    \begin{equation*}
        \mathcal{M}_{\delta}^{\mu,j}\colon L^{\frac{\alpha p}{\delta}}(\rn, \mu) \to L^{\frac{ \alpha p}{\delta}}(\rn, \nu)
    \end{equation*}    
    thanks to~\cite[Theorem~A]{Sawyer1}. It follows that    \begin{equation}\label{rem:generalized_KK:p_bound}
        G\colon L^{\frac{\alpha p}{\delta}}(\rn, \mu) \to L^{\frac{\alpha p}{\delta}}(\rn, \nu).
    \end{equation}
    In view of \eqref{rem:generalized_KK:p1_bound} and \eqref{rem:generalized_KK:p_bound}, we see that the operator $G$ is $(\frac{\alpha p}{\delta},\frac{d}{\delta})$-sawyerable, while we also note that
$G$ is order preserving.	
Moreover, notice that
\begin{equation*}
\mathcal{M}^{\mu,Q_0,\tau}_{\delta} f\leq Gf \quad \text{for every $f\in\M(\rn, \mu)$},
\end{equation*}
where $\mathcal{M}^{\mu,Q_0,\tau}_{\delta}$ is the maximal operator defined by \eqref{generalized_maximal_def}. Furthermore, one has
\begin{equation*}
\mathcal{M}^{\mu,Q_0,\tau}_{0}\chi_E\leq 1
\end{equation*}
for every $\mu$-measurable $E\subseteq\rn$. Therefore, it follows from Lemma~\ref{bigger_lemma} with $\theta = \frac{\alpha}{\delta}$ and $\beta = \delta$ (note that $\theta\in(0,1)$ and $\beta\in(0,d)$)
that
\begin{equation*}
        |I_\alpha^\mu \chi_E(x)| \leq C(\alpha, \delta, d,n) G\chi_E(x)^\frac{\alpha}{\delta} \quad \text{for every $x\in\rn$}.
\end{equation*}
Therefore, \eqref{E:pointwise-general} with $B=I_{\alpha}^\mu$ and $\lambda = \frac{\alpha}{\delta}$ is true with the same multiplicative constant. It remains to observe that, owing to our choice of $\delta$, the condition \eqref{E:parameters-large-enough} is satisfied. Therefore, altogether, we obtain \eqref{thm:most_general_KK:potential_bdd} by virtue of Theorem~\ref{T:general-principle}.
\end{proof}

Any effective use of Theorem~\ref{thm:calderon_thm_for_sawyerable_op} in practical tasks would require knowledge of boundedness of sawyerable operators on customary function spaces. The following result specializes it to Lorentz spaces (and so also to Lebesgue spaces).

\begin{theorem}
Let $\mu(\RR)=\infty$ and $r_1,r_2,s_1,s_2\in[1,\infty]$. 
Assume that $p,\pone$ satisfy~\eqref{E:pq}, and let $r$ be defined by~\eqref{def:sawyerable_op:def_of_p}. Suppose that either
    \begin{align}
        r_1\in(p,\pone ),\quad s_1\le s_2,\quad\text{and}\quad\frac{1}{\pone }+\frac{1}{\ptwo r_2}=\frac{1}{r_1}\label{E:sawyer-lorentz-condition-i-1},
            \\
        \intertext{or}
        r_1=r_2=p\quad\text{and}\quad s_1\leq p\leq s_2 \label{E:sawyer-lorentz-condition-i-2},
            \\
        \intertext{or}
        r_1=\pone ,\quad r_2=\infty\quad\text{and}\quad s_2=\infty.\label{E:sawyer-lorentz-condition-i-3}
    \end{align}
    Then every $(p,\pone)$-sawyerable operator $T$ is bounded from $L^{r_1,s_1}(\RR,\mu)$ to $L^{r_2,s_2}(\cS,\nu)$.
    
    Furthermore, when $\nu(\cS)<\infty$, we may replace $L^{r_2,s_2}(\cS,\nu)$ with $L^{r,s}(\cS,\nu)$ for every $1\leq r<r_2$ and $s\in[1,\infty]$.
\end{theorem}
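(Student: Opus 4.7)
The plan is to dispatch the two degenerate cases \eqref{E:sawyer-lorentz-condition-i-2} and \eqref{E:sawyer-lorentz-condition-i-3} directly from the endpoint estimates built into the definition of sawyerability, and to reduce the main case \eqref{E:sawyer-lorentz-condition-i-1} to Theorem~\ref{thm:calderon_thm_for_sawyerable_op} combined with the characterization of $R_{p,\pone}$ on Lorentz spaces provided by Theorem~\ref{thm:R_boundedness_lorentz_infty}. For \eqref{E:sawyer-lorentz-condition-i-2}, I would use \eqref{saw_op_p_p_endpoint} to get $T\colon L^p(\RR,\mu)\to L^p(\cS,\nu)$ at once, and then sandwich it between the embeddings $L^{p,s_1}(\RR,\mu)\subseteq L^{p,p}(\RR,\mu)=L^p(\RR,\mu)$ for $s_1\leq p$ and $L^p(\cS,\nu)=L^{p,p}(\cS,\nu)\subseteq L^{p,s_2}(\cS,\nu)$ for $s_2\geq p$, both supplied by \eqref{prel:lorentz_second_index_increasing}. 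Case \eqref{E:sawyer-lorentz-condition-i-3} is handled analogously: \eqref{saw_op_p1_infty_endpoint} yields $T\colon L^{\pone,\infty}(\RR,\mu)\to L^\infty(\cS,\nu)$, and $L^{\pone,s_1}(\RR,\mu)\subseteq L^{\pone,\infty}(\RR,\mu)$ by \eqref{prel:lorentz_second_index_increasing}.

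For the main case \eqref{E:sawyer-lorentz-condition-i-1}, I would first verify that the prescribed relation forces $r_2>p$: using $\frac{1}{\ptwo}=1-\frac{p}{\pone}$, \eqref{E:sawyer-lorentz-condition-i-1} solves explicitly to $r_2=\frac{r_1(\pone-p)}{\pone-r_1}$, which lies in $(p,\infty)$ precisely when $r_1\in(p,\pone)$. This is the pivotal observation, for it ensures via \eqref{ex:sawyerable_T_boundedness_lorentz_infty:power} that $(L^{r_2,s_2})^{\langle p\rangle}(\cS,\nu)=L^{r_2,s_2}(\cS,\nu)$. With this identification in hand, Theorem~\ref{thm:calderon_thm_for_sawyerable_op} applied to $X=L^{r_1,s_1}$ and $Y=L^{r_2,s_2}$ reduces the desired boundedness of $T$ to showing $R_{p,\pone}\colon L^{r_1,s_1}(0,\infty)\to L^{r_2,s_2}(0,\nu(\cS))$. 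When $\nu(\cS)=\infty$, this is precisely condition \eqref{E:R-lorentz-parameters-i} of Theorem~\ref{thm:R_boundedness_lorentz_infty}, which holds by hypothesis; when $\nu(\cS)<\infty$, it follows by restriction from the same conclusion on $(0,\infty)$. I should also briefly note the routine inclusion $L^{r_1,s_1}(\RR,\mu)\subseteq (L^p+L^{\pone,\infty})(\RR,\mu)$ for $r_1\in(p,\pone)$, so that Theorem~\ref{thm:calderon_thm_for_sawyerable_op} is indeed applicable.

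For the enhancement when $\nu(\cS)<\infty$, I would compose the boundedness into $L^{r_2,s_2}(\cS,\nu)$ just established with the standard embedding $L^{r_2,s_2}(\cS,\nu)\subseteq L^{r,s}(\cS,\nu)$ valid on finite measure spaces for every $r<r_2$ and every $s\in[1,\infty]$. The only genuinely technical point in the whole argument is thus the identification $(L^{r_2,s_2})^{\langle p\rangle}=L^{r_2,s_2}$, which rests on the inequality $r_2>p$; once this is confirmed, the remainder is a mechanical assembly of the two master theorems together with elementary embeddings between Lorentz scales.
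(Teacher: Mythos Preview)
Your proposal is correct and follows essentially the same approach as the paper: the endpoint cases \eqref{E:sawyer-lorentz-condition-i-2} and \eqref{E:sawyer-lorentz-condition-i-3} are handled directly from the definition of sawyerability together with the nesting \eqref{prel:lorentz_second_index_increasing}, the main case \eqref{E:sawyer-lorentz-condition-i-1} is reduced via Theorem~\ref{thm:calderon_thm_for_sawyerable_op} to the boundedness of $R_{p,\pone}$ supplied by Theorem~\ref{thm:R_boundedness_lorentz_infty} after observing that $r_2>p$ so that \eqref{ex:sawyerable_T_boundedness_lorentz_infty:power} gives $(L^{r_2,s_2})^{\langle p\rangle}=L^{r_2,s_2}$, and the finite-measure enhancement comes from the standard embedding of Lorentz spaces. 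Your explicit verification of $r_2>p$ and of the inclusion $L^{r_1,s_1}\subseteq(L^p+L^{\pone,\infty})$ are welcome details that the paper leaves implicit.
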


\begin{proof}
First, assume that~\eqref{E:sawyer-lorentz-condition-i-1}  holds. Note that it coincides with~\eqref{E:R-lorentz-parameters-i}. Hence
   \begin{equation*}
       R_{p,\pone}\colon L^{r_1,s_1}(0, \infty)\to L^{r_2,s_2}(0, \nu(\cS))
   \end{equation*}
   by Theorem~\ref{thm:R_boundedness_lorentz_infty}. Since $r_2>r_1>p>1$, one has
   \begin{equation*}
       (L^{r_2,s_2})^{\langle p\rangle}(\cS, \nu) = L^{r_2,s_2}(\cS, \nu)
   \end{equation*}
   owing to \eqref{ex:sawyerable_T_boundedness_lorentz_infty:power}. Therefore, the claim follows from Theorem~\ref{thm:calderon_thm_for_sawyerable_op}.

If either \eqref{E:sawyer-lorentz-condition-i-2} or~\eqref{E:sawyer-lorentz-condition-i-3} is satisfied, then the assertion follows straightaway from the definition of a sawyerable operator combined with the nesting property of Lorentz spaces pointed out in~\eqref{prel:lorentz_second_index_increasing}.

Finally, assume that $\nu(\cS)<\infty$ and either \eqref{E:sawyer-lorentz-condition-i-1} or \eqref{E:sawyer-lorentz-condition-i-2} is valid. We have already proved that every $(p,\pone)$-sawyerable operator $T$ is bounded from $L^{r_1,s_1}(\RR,\mu)$ to $L^{r_2,s_2}(\cS,\nu)$. Since $L^{r_2,s_2}(\cS,\nu)\subseteq L^{r,s}(\cS,\nu)$ for every $1\leq r < r_2$ and $s\in[1,\infty]$ provided that  $\nu(\cS)<\infty$ (e.g., \cite[p.~217]{BS}), we immediately obtain the fact that $T\colon L^{r_1,s_1}(\RR,\mu) \to L^{r,s}(\cS,\nu)$ is also bounded. 
\end{proof}

Finally, by combining Theorem~\ref{T:general-principle} with Theorem~\ref{thm:R_boundedness_lorentz_infty}, we obtain the boundedness of ``bad operators'' dominated by suitable ``good ones'' between Lorentz spaces.
\begin{theorem}
    Let $(\RR, \mu)$ and $(\cS, \nu)$ be nonatomic $\sigma$-finite measure spaces. Assume that $p,\pone$ satisfy~\eqref{E:pq} and let $r$ be defined by~\eqref{def:sawyerable_op:def_of_p}.  Let $\lambda\in(0,1)$ satisfy \eqref{E:parameters-large-enough}. Assume that $r_1, r_2, s_1, s_2\in[1,\infty]$ satisfy one of the conditions \eqref{E:sawyer-lorentz-condition-i-1}--\eqref{E:sawyer-lorentz-condition-i-3}. 
    
    Then every linear operator $B$, defined at least on characteristic functions of $\mu$-measurable subsets of $\RR$ of finite measure and taking values in $\M_0(\cS, \nu)$, satisfying \eqref{E:pointwise-general} with some order preserving $(\lambda p,\lambda \pone)$-sawyerable operator $G$, is bounded from $L^{r_1,1}(\RR,\mu)$ to $L^{r_2,s_2}(\cS,\nu)$.
\end{theorem}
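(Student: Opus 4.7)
The plan is to apply Theorem~\ref{T:general-principle} with $X(\RR,\mu) = L^{r_1,s_1}(\RR,\mu)$ and a target space $Y(\cS,\nu)$ chosen according to which of the three conditions is in force. In every case $r_1<\infty$, so the fundamental function $\varphi_X(t)\approx t^{1/r_1}$ vanishes at $0$, the hypothesis~\eqref{T:general-principle:fundX_vanishes} holds, and $\Lambda_X(\RR,\mu) = L^{r_1,1}(\RR,\mu)$ by~\eqref{prel:Lorentz_spaces_Lambda_endpoint}. It therefore suffices in each case to exhibit $Y$ such that Theorem~\ref{thm:R_boundedness_lorentz_infty} delivers $R_{p,\pone}\colon \bar X(0,\mu(\RR))\to\bar Y(0,\nu(\cS))$ and such that the resulting target $Y^{\langle p\rangle}(\cS,\nu)$, computed via~\eqref{ex:sawyerable_T_boundedness_lorentz_infty:power}, is contained in $L^{r_2,s_2}(\cS,\nu)$.

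When~\eqref{E:sawyer-lorentz-condition-i-1} holds, I would set $Y=L^{r_2,s_2}$; the assumed relations on the parameters coincide verbatim with~\eqref{E:R-lorentz-parameters-i}, and the identity $\frac{1}{p}-\frac{1}{\pone}=\frac{1}{p\ptwo}$ combined with $r_1\in(p,\pone)$ forces $r_2>p$, so~\eqref{ex:sawyerable_T_boundedness_lorentz_infty:power} gives $Y^{\langle p\rangle}=L^{r_2,s_2}$ on the nose. When~\eqref{E:sawyer-lorentz-condition-i-3} holds, I would take $Y=L^\infty$; the parameters match~\eqref{E:R-lorentz-parameters-iii} (which imposes no restriction on $s_1$), and $(L^\infty)^{\langle p\rangle}=L^\infty$. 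In both of these cases Theorem~\ref{T:general-principle} closes the argument directly.

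The subtlest case is~\eqref{E:sawyer-lorentz-condition-i-2}, where $r_1=r_2=p$ and $s_2\ge p$ may be strictly less than $\infty$; the naive choice $Y=L^{p,s_2}$ is ruled out because Theorem~\ref{thm:R_boundedness_lorentz_infty} requires $s_2=\infty$ whenever $r_2=p$. The plan is to circumvent this by setting $Y=L^{p,\infty}$: the hypothesis $s_1\le p$ puts us precisely in case~\eqref{E:R-lorentz-parameters-ii} of Theorem~\ref{thm:R_boundedness_lorentz_infty}, giving $R_{p,\pone}\colon L^{p,s_1}(0,\infty)\to L^{p,\infty}(0,\infty)$, while the identity $(L^{p,\infty})^{\langle p\rangle}=L^p$ from~\eqref{ex:sawyerable_T_boundedness_lorentz_infty:power} upgrades the weak endpoint into the strong one. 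Theorem~\ref{T:general-principle} then yields $B\colon L^{p,1}(\RR,\mu)\to L^p(\cS,\nu)$, and the desired conclusion $B\colon L^{p,1}(\RR,\mu)\to L^{p,s_2}(\cS,\nu)$ drops out of the nesting $L^p=L^{p,p}\subseteq L^{p,s_2}$ (valid because $s_2\ge p$), recorded in~\eqref{prel:lorentz_second_index_increasing}. The only step that is not entirely routine is precisely this exploitation of the $(\cdot)^{\langle p\rangle}$-operation to pass from $L^{p,\infty}$ to $L^p$, which is the structural phenomenon highlighted immediately after~\eqref{ex:sawyerable_T_boundedness_lorentz_infty:power} as the raison d'\^etre of the construction.
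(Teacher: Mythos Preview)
Your proposal is correct and follows essentially the same approach as the paper: both arguments apply Theorem~\ref{T:general-principle} with $X=L^{r_1,s_1}$, verify the $R_{p,\pone}$-boundedness via Theorem~\ref{thm:R_boundedness_lorentz_infty}, identify $Y^{\langle p\rangle}$ through~\eqref{ex:sawyerable_T_boundedness_lorentz_infty:power}, and read off $\Lambda_X=L^{r_1,1}$ from~\eqref{prel:Lorentz_spaces_Lambda_endpoint}; in the delicate case~\eqref{E:sawyer-lorentz-condition-i-2} both take $Y=L^{p,\infty}$ to exploit $(L^{p,\infty})^{\langle p\rangle}=L^p$. Your write-up is in fact slightly more explicit than the paper's, since you spell out the final nesting $L^p\subseteq L^{p,s_2}$ needed when $s_2>p$, which the paper leaves implicit.
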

\begin{proof}
The claim follows from Theorem~\ref{T:general-principle} combined with Theorem~\ref{thm:R_boundedness_lorentz_infty}, \eqref{ex:sawyerable_T_boundedness_lorentz_infty:power}, and \eqref{prel:Lorentz_spaces_Lambda_endpoint}. When the parameters $r_1,r_2,s_1,s_2$ satisfy either \eqref{E:sawyer-lorentz-condition-i-1} or \eqref{E:sawyer-lorentz-condition-i-3}, we use Theorem~\ref{T:general-principle} with $X(\RR, \mu) = L^{r_1,s_1}(\RR, \mu)$ and $Y(\cS, \nu) = Y^{\langle p \rangle}(\cS, \nu) = L^{r_2,s_2}(\cS, \nu)$. When the parameters satisfy \eqref{E:sawyer-lorentz-condition-i-2}, we use the same theorem, but this time with $X(\RR, \mu) = L^{r_1,s_1}(\RR, \mu)$ and $Y(\cS, \nu) = L^{r_2,\infty}(\cS, \nu)$, recalling that $(L^{p,\infty})^{\langle p \rangle}(\cS,\nu) = L^{p}(\cS, \nu)$.
\end{proof}

   \section{Proofs of Main Results}
\begin{proof}[Proof of Theorem~\ref{thm:calderon_thm_for_sawyerable_op}]
We start by showing that (iii) implies~(ii). Let $T$ be a $(p,\pone)$-sawyerable operator. By Proposition~\ref{prop:joint_sawyerable_vs_sep_sawyerable}, one has
\begin{align*}
    \|Tf\|_{Y^{\langle p \rangle}(\cS,\nu)} &= \| ( (|Tf|^p)^{**}_\nu )^\frac1{p} \|_{\bar Y(0, \nu(\cS))} \\
    &\lesssim \|R_{p,\pone} f^*_\mu\|_{\bar Y(0, \nu(\cS))} + \left \|\sup_{s\in[t^{\ptwo }, \infty)} s^{\frac1{\pone }} f^*_\mu(s)\right\|_{\bar Y(0, \nu(\cS))} \\
    &\approx \|R_{p,\pone} f^*_\mu\|_{\bar Y(0, \nu(\cS))} + \left\|\sup_{s\in[t^{\ptwo }, \nu(\cS)^{\ptwo })} s^{\frac1{\pone }} f^*_\mu(s)\right\|_{\bar Y(0, \nu(\cS))} \\
    &\quad+\left \|\sup_{s\in[\nu(\cS)^{\ptwo }, \infty)} s^{\frac1{\pone }} f^*_\mu(s) \right\|_{\bar Y(0, \nu(\cS))}
\end{align*}
for every $f\in X(\RR, \mu)$. Furthermore, it follows from Proposition~\ref{pro:supremum_operator_is_unnecessary} that
\begin{equation*}
    \left\|R_{p,\pone} f^*_\mu \right\|_{\bar Y(0, \nu(\cS))} + \left\|\sup_{s\in[t^{\ptwo }, \nu(\cS)^{\ptwo })} s^{\frac1{\pone }} f^*_\mu(s)\right\|_{\bar Y(0, \nu(\cS))} \approx \left\|R_{p,\pone} f^*_\mu \right\|_{\bar Y(0, \nu(\cS))}
\end{equation*}
for every $f\in X(\RR, \mu)$. Combining these two observations together with (iii), we arrive at
\begin{align}
    \|Tf\|_{Y^{\langle p \rangle}(\cS,\nu)} &\lesssim \left\|R_{p,\pone} f^*_\mu\right\|_{\bar Y(0, \nu(\cS))} + \left\|\sup_{s\in[\nu(\cS)^{\ptwo }, \infty)} s^{\frac1{\pone }} f^*_\mu(s)\right\|_{\bar Y(0, \nu(\cS))} \nonumber\\
    &\lesssim \|f^*_\mu\|_{\bar X(0,\mu(\RR))} + \left\|\sup_{s\in[\nu(\cS)^{\ptwo }, \infty)} s^{\frac1{\pone }} f^*_\mu(s)\right\|_{\bar Y(0, \nu(\cS))} \nonumber\\
    &= \|f\|_{X(\RR, \mu)} + \left\|\sup_{s\in[\nu(\cS)^{\ptwo }, \infty)} s^{\frac1{\pone }} f^*_\mu(s)\right\|_{\bar Y(0, \nu(\cS))} \label{thm:calderon_thm_for_sawyerable_op:eq9}
\end{align}
for every $f\in X(\RR, \mu)$. Now, since the second term on the right-hand side is equal to zero when $\nu(\cS) = \infty$, we have proved the desired boundedness provided that $\nu(\cS) = \infty$. When $\nu(\cS) < \infty$, we use the fact that $X(\RR,\mu)\subseteq (L^{p} + L^{\pone , \infty})(\RR, \mu)$ and \eqref{prel:K_equiv_norms} to obtain
\begin{equation*}
    K(f,\nu(\cS)^{\frac1{p}}; L^{p}, L^{\pone , \infty}) \approx K(f,1; L^{p}, L^{\pone , \infty}) = \|f\|_{(L^{p} + L^{\pone , \infty})(\RR, \mu)} \lesssim \|f\|_{X(\RR, \mu)}
\end{equation*}
for every $f\in X(\RR, \mu)$, in which the constants in the equivalence depend only on $\nu(\cS)$, $p$ and $q$. Since
\begin{equation*}
    K(f,\nu(\cS)^{\frac1{p}}; L^{p}, L^{\pone , \infty}) \approx \Bigg( \int_0^{\nu(\cS)^{\ptwo }} f_\mu^*(s)^p \ds \Bigg)^\frac1{p} + \nu(\cS)^{\frac1{p}}\sup_{s\in[\nu(\cS)^{\ptwo }, \infty)} s^\frac1{\pone } f_\mu^*(s)
\end{equation*}
for every $f\in X(\RR, \mu)$, thanks to the equivalent expression for the $K$-functional between $L^{p}(\RR, \mu)$ and $L^{\pone , \infty}(\RR, \mu)$ (see~\eqref{prop:joint_sawyerable_vs_sep_sawyerable:K+Holm}), we have
\begin{equation}\label{thm:calderon_thm_for_sawyerable_op:eq10}
    \left\|\sup_{s\in[\nu(\cS)^{\ptwo }, \infty)} s^{\frac1{\pone }} f^*_\mu(s)\right\|_{\bar Y(0, \nu(\cS))} \lesssim \nu(\cS)^{-\frac1{p}} \|f\|_{X(\RR, \mu)} \|1\|_{\bar Y(0, \nu(\cS))}
\end{equation}
for every $f\in X(\RR, \mu)$. Clearly, $\nu(\cS)^{-\frac1{p}}\|1\|_{\bar Y(0, \nu(\cS))} < \infty$ is independent of $f$. Hence, combining \eqref{thm:calderon_thm_for_sawyerable_op:eq9} with \eqref{thm:calderon_thm_for_sawyerable_op:eq10}, we obtain the desired boundedness of $T$ even when $\nu(\cS) < \infty$.

As (ii) clearly implies (i), we only need to prove that (i) implies~(iii). Fix $g=g\chi_{(0, \mu(\RR))}\in \bar X(0, \mu(\RR))$. Replacing $g$ with $g\chi_{(0,N)}\chi_{(0, \mu(\RR))}$ for an appropriate $N\in(0,\infty)$ if necessary, we may assume that $|\spt g| < \infty$, i.e.~the support of $g$ has finite measure. Since $(\RR, \mu)$ is nonatomic, there is a function $h\in X(\RR, \mu) \subseteq (L^{p} + L^{\pone , \infty})(\RR, \mu)$ such that (see~\cite[Chapter~2, Corollary~7.8]{BS})
\begin{equation}\label{thm:calderon_thm_for_sawyerable_op:eq8}
h_\mu^* = g^*.
\end{equation}
Moreover, since $|\spt g| < \infty$, we have $g\in L^{p}(0, \infty)$. Next, by~\cite[Chapter~3, Corollary~2.13]{BS}, there is a linear operator $S\colon (L^1 + L^\infty)(\RR, \mu) \to (L^1 + L^\infty)(0, \infty)$ satisfying:
\begin{align}
    &S|h| = g^* \quad \text{a.e.\ in $(0, \infty)$}, \label{thm:calderon_thm_for_sawyerable_op:eq12}
    \\
    &Sf = \chi_{(0, \mu(\RR))}Sf \quad \text{for every $f\in(L^1 + L^\infty)(\RR, \mu)$}\label{thm:calderon_thm_for_sawyerable_op:eq11}
    \\
    \intertext{and}
    &\max\{\|S\|_{L^1\to L^1}, \|S\|_{L^\infty \to L^\infty}\} \leq1. \label{thm:calderon_thm_for_sawyerable_op:eq1}
\end{align}
Moreover, \eqref{thm:calderon_thm_for_sawyerable_op:eq1} together with \eqref{thm:calderon_thm_for_sawyerable_op:eq11} implies (see~\cite[Chapter~3, Theorem~2.2]{BS}) that
\begin{equation}\label{thm:calderon_thm_for_sawyerable_op:eq2}
    \|Sf\|_{\bar Z(0, \mu(\RR))} \leq \|f\|_{Z(\RR,\mu)} \quad \text{for every $f\in Z(\RR, \mu)$}
\end{equation}
and for every rearrangement-invariant function space $Z(\RR, \mu)$. Now, we define two auxiliary operators. The first one, denoted $Q_1$, is defined as
\begin{equation*}
    Q_1f(t) = \chi_{(0, \nu(\cS))}(t)\Big(\frac1{t^{\ptwo }}\int_0^{t^{\ptwo }} f(s)\chi_{(0, \mu(\RR))}(s) \ds\Big) t^{\frac{\ptwo  - 1}{p}}
\end{equation*}
for  $t\in(0, \infty)$ and $f\in(L^1 + L^\infty)(0, \infty)$.
Clearly
\begin{equation}\label{thm:calderon_thm_for_sawyerable_op:eq3}
   |Q_1f(t)|\leq \chi_{(0, \nu(\cS))}(t)(f\chi_{(0, \mu(\RR))})^{**}(t^{\ptwo })t^{\frac{\ptwo  - 1}{p}} \quad \text{for every $t\in(0, \infty)$}
\end{equation}
and every $f\in(L^1 + L^\infty)(0, \infty)$ thanks to the Hardy--Littlewood inequality~\eqref{prel:HL_ineq}. Next, we set $\tilde{T}= Q_1\circ S$. The operator $\tilde{T}$ is clearly linear, $\tilde{T}f = \chi_{(0, \nu(\cS))}\tilde{T}f$ and $\tilde{T}f\in \M_0(0, \infty)$ for every $f\in (L^1 + L^\infty)(\RR, \mu)$. For future reference, note that
\begin{equation}\label{thm:calderon_thm_for_sawyerable_op:eq6}
\chi_{(0, \nu(\cS))}(t)S|h|(t^{\ptwo }) t^{\frac{\ptwo  - 1}{p}} \leq Q_1(S|h|)(t) = \tilde{T}|h|(t) \quad \text{for every $t\in(0, \infty)$}
\end{equation}
thanks to \eqref{thm:calderon_thm_for_sawyerable_op:eq11} and the fact that $S|h|$ coincides with a nonincreasing function a.e.~in $(0, \infty)$. As for the second auxiliary operator, since $(\cS, \nu)$ is nonatomic and $\tilde{T}|h| = \chi_{(0, \nu(\cS))}\tilde{T}|h|$, there is a function $G\in\M_0^+(\cS,\nu)$ such that $G_\nu^* = (\tilde{T}|h|)^*$ (see~\cite[Chapter~2, Corollary~7.8]{BS}). Moreover, we have
\begin{align}
\lim_{t\to\infty} G_\nu^*(t) &= \lim_{t\to\infty} (\tilde{T}|h|)^*(t) =\lim_{t\to\infty} \big( \chi_{(0, \nu(\cS))}(s)g^{**}(s^{\ptwo }) s^\frac{\ptwo  - 1}{p} \big)^*(t) = 0. \label{thm:calderon_thm_for_sawyerable_op:eq13}
\end{align}
This is obvious when $\nu(\cS)<\infty$. When $\nu(\cS) = \infty$, it is not hard to see that the desired fact follows from $g\in L^{p}(0, \infty)$. Indeed, using the definition of $g^{**}$ and the H\"older inequality, we see that
\begin{equation*}
    g^{**}(s^{\ptwo }) s^\frac{\ptwo  - 1}{p} \leq s^{\frac{\ptwo  - 1}{p} - \ptwo  + \frac{\ptwo }{p'}} \|g\|_{L^p(0,\infty)}  = s^{-\frac1{p}} \|g\|_{L^p(0,\infty)} 
\end{equation*}
for every $s\in(0, \infty)$, and so
\begin{equation*}
    \big(g^{**}(s^{\ptwo }) s^\frac{\ptwo  - 1}{p} \big)^*(t) \leq t^{-\frac1{p}} \|g\|_{L^p(0,\infty)} \quad \text{for every $t\in(0, \infty)$}.
\end{equation*}
Therefore, \eqref{thm:calderon_thm_for_sawyerable_op:eq13} is true regardless of whether $\nu(\cS)<\infty$ or $\nu(\cS)=\infty$. Hence, there is a measure-preserving transformation $\sigma$ from the support of $G$ onto the support of $G_\nu^*$ (i.e., $(0, \nu(\cS))$) such that $G = (G_\nu^* \circ \sigma)\chi_{\spt G}$ (see~\cite[Chapter~2, Corollary~7.6]{BS}). We now define the second auxiliary operator, denoted $Q_2$, as
\begin{equation*}
Q_2 f(x) = f(\sigma(x))\chi_{\spt G}(x),\ x\in \cS,\ f\in\M_0(0, \infty).
\end{equation*}
The operator $Q_2$ is linear and maps $\M_0(0, \infty)$ into $\M_0(\cS, \nu)$. Moreover, we have (see~\cite[Chapter~2, Proposition~7.2]{BS})
\begin{equation}\label{thm:calderon_thm_for_sawyerable_op:eq5}
(Q_2 f)_\nu^* = (\chi_{(0, \nu(\cS))}f)^* \quad \text{a.e.~in $(0,\infty)$ for every $f\in\M_0(0,\infty)$}.
\end{equation}
Hence, in particular,
\begin{equation}\label{thm:calderon_thm_for_sawyerable_op:eq4}
(Q_2(\tilde{T}|h|))_\nu^* = (\tilde{T}|h|)^* \quad \text{a.e.~in $(0,\infty)$}.
\end{equation}

Finally, we define the operator $T$ as
\begin{equation*}
	T = Q_2 \circ \tilde T = Q_2\circ Q_1\circ S.
\end{equation*}
The operator $T$ is clearly linear, being a composition of linear operators. We claim that it is bounded from $L^{p}(\RR,\mu)$ and $L^{\pone , \infty}(\RR, \mu)$ to $L^{p}(\cS, \nu)$ and $L^\infty(\cS, \nu)$, respectively. Indeed, using \eqref{thm:calderon_thm_for_sawyerable_op:eq5}, \eqref{thm:calderon_thm_for_sawyerable_op:eq3}, \eqref{thm:calderon_thm_for_sawyerable_op:eq11}, a change of variables, \eqref{prel:two_stars_on_Lorentz_spaces}, and \eqref{thm:calderon_thm_for_sawyerable_op:eq2}, we have
\begin{align*}
   \|T f\|_{L^p(\cS, \nu)}^p &= \|(Tf)_\nu^*\|_{L^p(0, \nu(\cS))}^p = \|(\tilde T f)^*\|_{L^p(0, \nu(\cS))}^p = \|\tilde T f\|_{L^p(0, \nu(\cS))}^p \\
	&\leq \|(Sf)^{**}(t^{\ptwo })t^{\frac{\ptwo  - 1}{p}}\|_{L^p(0, \nu(\cS))}^p = \int_0^{\nu(\cS)}(Sf)^{**}(t^{\ptwo })^pt^{\ptwo  - 1} \dt \\
    &\leq \frac1{\ptwo } \|(Sf)^{**}\|_{L^p(0, \infty)}^p \lesssim \|Sf\|_{L^p(0, \infty)}^p = \|Sf\|_{L^p(0, \mu(\RR))}^p \\
    &\leq \|f\|_{L^p(\RR, \mu)}^p
\end{align*}
for every $f\in L^p(\RR, \mu)$. Moreover (using also~\eqref{def:sawyerable_op:def_of_p}), we have 
\begin{align*}
    \|T f\|_{L^\infty(\cS, \nu)} &= \|(T f)^*_\nu\|_{L^\infty(0, \nu(\cS))} = \|\tilde T f\|_{L^\infty(0, \nu(\cS))} \\
    &\leq \sup_{t\in(0, \infty)} (Sf)^{**}(t^{\ptwo })t^{\frac{\ptwo  - 1}{p}} = \sup_{t\in(0, \infty)} (Sf)^{**}(t^{\ptwo })t^{\frac{\ptwo }{\pone }} = \|(Sf)^{**}\|_{L^{\pone , \infty}(0, \infty)} \\
		&\lesssim \|Sf\|_{L^{\pone , \infty}(0, \infty)}  = \|Sf\|_{L^{\pone , \infty}(0, \mu(\RR))} \leq \|f\|_{L^{\pone , \infty}(\RR, \mu)}
\end{align*}
for every $f\in L^{\pone , \infty}(\RR, \mu)$. Hence, $T$ is $(p,\pone)$-sawyearable. At last, we are in a position to prove that (i)~implies~(iii). If (i) is assumed, it follows that $T$ is bounded from $X(\RR,\mu)$ to $Y^{\langle p \rangle}(\cS, \nu)$. Therefore, there is a constant $C$ such that
\begin{equation}\label{thm:calderon_thm_for_sawyerable_op:eq7}
\|Tf\|_{Y^{\langle p \rangle}(\cS, \nu)} \leq C \|f\|_{X(\RR,\mu)} \quad \text{for every $f\in X(\RR, \mu)$}.
\end{equation}
Consequently, using a change of variables, \eqref{thm:calderon_thm_for_sawyerable_op:eq12}, \eqref{thm:calderon_thm_for_sawyerable_op:eq6}, the Hardy--Littlewood inequality~\eqref{prel:HL_ineq} together with \eqref{thm:calderon_thm_for_sawyerable_op:eq4}, \eqref{thm:calderon_thm_for_sawyerable_op:eq7}, and \eqref{thm:calderon_thm_for_sawyerable_op:eq8}, we obtain
\begin{align*}
    \|R_{p,\pone}g\|_{\bar Y(0, \nu(\cS))} &=  \Big\| \Big(\frac1{t}\int_0^{t^{\ptwo }} g^*(s)^p \ds \Big)^\frac1{p} \Big\|_{\bar Y(0, \nu(\cS))} \\
    &\approx \Big\| \Big(\frac1{t}\int_0^t g^*(s^{\ptwo })^p s^{\ptwo  - 1} \ds \Big)^\frac1{p} \Big\|_{\bar Y(0, \nu(\cS))} \\
    &= \Big\| \Big(\frac1{t}\int_0^t \Big( S|h|(s^{\ptwo }) s^{\frac{\ptwo  - 1}{p}} \Big)^p \ds \Big)^\frac1{p} \Big\|_{\bar Y(0, \nu(\cS))} \\
    &\leq \Big\| \Big(\frac1{t}\int_0^t \tilde T |h| (s)^p  \ds \Big)^\frac1{p} \Big\|_{\bar Y(0, \nu(\cS))} \\
    &\leq \|((T|h|)^p)_\nu^{**})^\frac1{p}\|_{\bar Y(0, \nu(\cS))} = \|T|h|\|_{Y^{\langle p \rangle}(\cS, \nu)} \\
		&\leq C \|h\|_{X(\RR, \mu)} = C \|h^*_\mu\|_{\bar X(0, \mu(\RR))} = C \|g\|_{\bar X(0, \mu(\RR))}.
\end{align*}
Hence, the operator $R_{p,\pone}$ is bounded from $\bar X(0, \mu(\RR))$ to $\bar Y(0, \nu(\cS))$. In other words, we have shown that (i)~implies~(iii), which finishes the proof.
\end{proof}

Before we give a proof of Theorem~\ref{T:general-principle}, we need to make a simple observation.

\begin{lemma}\label{L:transfer-of-sawyerability}
Let $(\RR,\mu)$ and $(\cS, \nu)$ be nonatomic $\sigma$-finite measure spaces. Let $p,\pone$ satisfy~\eqref{E:pq}, and let $\alpha\in(\frac{1}{p},\infty)$. Assume that $T$ is an order preserving $(p,\pone)$-sawyerable operator. Then the operator $T_{\alpha}$, defined as
    \begin{equation*}
        T_{\alpha}f=\left[T\left(|f|^{\alpha}\right)\right]^{\frac{1}{\alpha}},
    \end{equation*}
    is $(\alpha p,\alpha \pone)$-sawyerable.
\end{lemma}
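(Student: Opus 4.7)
The plan is to verify the two endpoint estimates defining $(\alpha p,\alpha\pone)$-sawyerability of $T_{\alpha}$ by a direct reduction to the corresponding estimates for $T$. The argument rests on the elementary rearrangement identity $(|f|^{\alpha})^*_\mu=(f^*_\mu)^{\alpha}$, which yields the two norm identities
\begin{equation*}
    \||f|^{\alpha}\|_{L^{p}(\RR,\mu)}=\|f\|_{L^{\alpha p}(\RR,\mu)}^{\alpha}\quad\text{and}\quad\||f|^{\alpha}\|_{L^{\pone,\infty}(\RR,\mu)}=\|f\|_{L^{\alpha\pone,\infty}(\RR,\mu)}^{\alpha}.
\end{equation*}
The assumption $\alpha>\tfrac{1}{\pone}$ ensures $\alpha\pone>1$, so $L^{\alpha\pone,\infty}$ is admissible as a source space, and moreover $|f|^{\alpha}\in(L^{p}+L^{\pone,\infty})(\RR,\mu)$ whenever $f\in(L^{\alpha p}+L^{\alpha\pone,\infty})(\RR,\mu)$ (by splitting $f=f_1+f_2$ and using $|f|^\alpha\lesssim|f_1|^\alpha+|f_2|^\alpha$), so $T(|f|^{\alpha})$ is well defined via the sawyerability of $T$.

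The two endpoint estimates then follow by unwinding the definition $T_\alpha f=[T(|f|^\alpha)]^{1/\alpha}$. Using the $L^p\to L^p$ boundedness of $T$,
\begin{equation*}
    \|T_\alpha f\|_{L^{\alpha p}(\cS,\nu)}^{\alpha}=\|T(|f|^\alpha)\|_{L^{p}(\cS,\nu)}\lesssim \||f|^\alpha\|_{L^{p}(\RR,\mu)}=\|f\|_{L^{\alpha p}(\RR,\mu)}^\alpha,
\end{equation*}
and, analogously, using the $L^{\pone,\infty}\to L^\infty$ boundedness of $T$,
\begin{equation*}
    \|T_\alpha f\|_{L^\infty(\cS,\nu)}^\alpha=\|T(|f|^\alpha)\|_{L^\infty(\cS,\nu)}\lesssim\||f|^\alpha\|_{L^{\pone,\infty}(\RR,\mu)}=\|f\|_{L^{\alpha\pone,\infty}(\RR,\mu)}^\alpha.
\end{equation*}
Taking $\alpha$-th roots on both sides produces the two endpoint inequalities required for $(\alpha p,\alpha\pone)$-sawyerability.

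It remains to check that $T_\alpha$ is quasilinear. Positive homogeneity is transparent from $|\lambda f|^\alpha=|\lambda|^\alpha|f|^\alpha$ together with the positive homogeneity of $T$. Subadditivity follows by combining the pointwise inequality $|f+g|^\alpha\le C_\alpha(|f|^\alpha+|g|^\alpha)$ (with $C_\alpha=\max\{1,2^{\alpha-1}\}$) and the elementary estimate $(a+b)^{1/\alpha}\le C'_\alpha(a^{1/\alpha}+b^{1/\alpha})$ with the quasilinearity of $T$ applied to the decomposition $|f|^\alpha+|g|^\alpha$. The only delicate point is to pass from the first pointwise bound through $T$, which uses a mild monotonicity-type consequence of the quasilinearity together with the nonnegativity of the arguments involved. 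I do not anticipate a serious obstacle: once the rearrangement identities are in hand, the lemma amounts to a clean one-page bookkeeping exercise.
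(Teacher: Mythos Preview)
Your verification of the two endpoint estimates is correct and takes a more direct route than the paper. The paper works through Proposition~\ref{prop:joint_sawyerable_vs_sep_sawyerable}: it observes that the parameter $r=\pone/(\pone-p)$ is invariant under the rescaling $(p,\pone)\mapsto(\alpha p,\alpha\pone)$ and then transfers the pointwise characterization~(ii) there from $T$ to $T_\alpha$ via the identity $|T_\alpha f|^{\alpha p}=|T(|f|^\alpha)|^{p}$ together with $(|f|^\alpha)^*_\mu=(f^*_\mu)^\alpha$. Your argument bypasses this machinery entirely and reads the bounds $T_\alpha\colon L^{\alpha p}\to L^{\alpha p}$ and $T_\alpha\colon L^{\alpha\pone,\infty}\to L^\infty$ directly off the elementary norm identities $\||f|^\alpha\|_{L^p}=\|f\|_{L^{\alpha p}}^\alpha$ and $\||f|^\alpha\|_{L^{\pone,\infty}}=\|f\|_{L^{\alpha\pone,\infty}}^\alpha$. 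This is shorter and uses nothing beyond the definitions; the paper's route has the virtue of staying within the $R_{p,\pone}$ framework, but that is not needed for this lemma.

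The point you flag as ``delicate''\textemdash the quasilinearity of $T_\alpha$\textemdash is a real gap rather than bookkeeping. Quasilinearity of $T$ alone carries no monotonicity, so from the pointwise bound $|f+g|^\alpha\le C_\alpha(|f|^\alpha+|g|^\alpha)$ there is no general mechanism to control $|T(|f+g|^\alpha)|$ by $|T(|f|^\alpha)|+|T(|g|^\alpha)|$; for a linear $T$ that is not sign-preserving, $T_\alpha$ need not even be well defined. The paper's proof of the lemma is silent on this point as well, and in the proof of Theorem~\ref{T:general-principle} disposes of it separately (``it is easy to see that the quasilinearity of $G$ implies that $G_{1/\lambda}$ is also quasilinear'') for the concrete operator there, a sum of fractional maximal operators, which is nonnegative and monotone so that the claim \emph{is} immediate. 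So your gap matches the paper's; just be aware that closing it in the stated generality would require an additional hypothesis such as $|Tf|\le |Tg|$ whenever $0\le f\le g$.
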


\begin{proof}
We first observe that the assumption $T$ is quasi-linear and order preserving easily implies that $T_\alpha$ is quasi-linear and order preserving. Fix $t\in(0, \nu(\cS))$. In view of Proposition~\ref{prop:joint_sawyerable_vs_sep_sawyerable}, the assertion immediately follows from observing that the value of $r$ which corresponds to the pair $(p,q)$ in~\eqref{def:sawyerable_op:def_of_p} is unchanged for any pair $(\alpha p,\alpha q)$, $\alpha \in (\frac{1}{p },\infty)$, whence one has
    \begin{align*}
            \left(|T_{\alpha}f|^{\alpha p}\right)_\nu^{**}(t)^{\frac{1}{\alpha p}}
            &=
            \left(|T\left(|f|^{\alpha}\right)|^{p}\right)_\nu^{**}(t)^{\frac{1}{\alpha p}}
                \nonumber\\
            &\lesssim
            \Big( R_{p,\pone}(|f|^{\alpha})_\mu^*(t) + \sup_{s\in[t^{\ptwo }, \infty)} s^\frac1{\pone } (|f|^{\alpha})_\mu^*(s)  \Big)^{\frac{1}{\alpha}}
                \nonumber\\
            &\approx
            R_{\alpha p,\alpha \pone}f_\mu^*(t) 
            + \sup_{s\in[t^{\ptwo }, \infty)} s^\frac1{\alpha \pone } f_\mu^*(s),
    \end{align*}
		in which the multiplicative constants depend only on $\alpha$.
\end{proof}

\begin{proof}[Proof of Theorem~\ref{T:general-principle}]
    We first note that, by Lemma~\ref{L:transfer-of-sawyerability}, the operator $G_{\frac{1}{\lambda}}$ is $(p,\pone)$-sawyerable. Furthermore, it is easy to see that the assumptions that $G$ is quasi-linear and order preserving imply that $G_{\frac{1}{\lambda}}$ is also quasi-linear. Consequently, since $R_{p,\pone}\colon \bar X(0, \mu(\RR))\to \bar Y(0, \nu(\cS))$, Theorem~\ref{thm:calderon_thm_for_sawyerable_op} implies that
\begin{equation}\label{E:general-transferred-sawyerability}
    \|G_{\frac1{\lambda}}f\|_{Y^{\langle p\rangle}(\cS,\nu)} \lesssim \|f\|_{X(\RR, \mu)} \quad \text{for every $f\in X(\RR, \mu)$}.
\end{equation}
Therefore, using \eqref{E:pointwise-general}, the definition of $G_{\frac1{\lambda}}$, and \eqref{E:general-transferred-sawyerability}, we arrive at
    \begin{equation}\label{E:general-riesz-chain}
        \|B\chi_{E}\|_{Y^{\langle p\rangle}(\cS,\nu)}
        \lesssim
         \|(G\chi_{E})^{\lambda}\|_{Y^{\langle p\rangle}(\cS,\nu)} =
         \|G_{\frac{1}{\lambda}}\chi_{E}\|_{Y^{\langle p\rangle}(\cS,\nu)}\lesssim
         \|\chi_{E}\|_{X(\RR, \mu)}
\end{equation}
for every $E\subseteq\RR$ of finite measure.
Using the summability property of $\Lambda_X(\RR,\mu)$ (cf.~\cite[Theorem 3.13 on p.~195]{SteinWeiss}), we get that \eqref{E:general-riesz-chain} is in fact valid for every simple function (i.e., a linear combination of characteristic functions of sets of finite measure) $f$ on $(\RR, \mu)$. To verify this argument, note that it is enough to consider nonnegative simple functions. Writing $f=\sum_{j=1}^N \alpha_j \chi_{E_j}$, where $\alpha_j > 0$, $j = 1, \dots, N$, $E_N\subseteq \cdots \subseteq E_1$, and using \eqref{E:general-riesz-chain} together with \eqref{T:general-principle:fundX_vanishes}, we obtain
\begin{align*}
    \|Bf\|_{Y^{\langle p\rangle}(\cS,\nu)} &\leq \sum_{j = 1}^N \alpha_j \|B\chi_{E_j}\|_{Y^{\langle p\rangle}(\cS,\nu)} \lesssim \sum_{j = 1}^N \alpha_j \varphi_X(\mu(E_j)) \\
    &= \sum_{j = 1}^N \alpha_j \int_0^{\mu(E_j)} \varphi_X'(t) \dt = \int_0^\infty f^*_\mu(t) \varphi_X'(t) \dt \\
    &\leq \; \|f\|_{\Lambda_X(\RR, \mu)}.
\end{align*}
Finally, since $B$ is linear and simple functions are dense in $\Lambda_X(\RR, \mu)$ thanks to \eqref{T:general-principle:fundX_vanishes}, the operator $B$ can be uniquely extended to a bounded linear operator from $\Lambda_X(\RR, \mu)$ to $Y^{\langle p\rangle}(\cS,\nu)$.
\end{proof}

\begin{remark}
The operator $B$ is assumed to be linear in Theorem~\ref{T:general-principle}. Another possibility is to assume that $B$ is a nonnegative sublinear operator (see~\cite[p.~230]{BS}) defined on all simple functions.
\end{remark}

We will precede the rather involved proof of Theorem~\ref{thm:R_boundedness_lorentz_infty} with that of Theorem~\ref{mainresult-prime}, which is just a corollary of the more general Theorem~\ref{thm:most_general_KK}.

\begin{proof}[Proof of Theorem~\ref{mainresult-prime}]
    By Proposition~\ref{prop:endpoint_estimates_for_R}, we get
    \begin{equation*}
        R_{p,\frac{d}{\alpha}}\colon 
        L^{p}(0, \mu(\rn)) 
        \to L^{p,\infty}(0, \nu(\rn)).
    \end{equation*}
    It thus follows from Theorem~\ref{thm:most_general_KK} with $Q_0=[0,1)^n$, $X(\rn,\mu) = L^{p}(\rn, \mu)$ and 
    $Y(\rn,\nu)= L^{p,\infty}(\rn, \nu)$ (note that the assumption~\eqref{T:general-principle:fundX_vanishes} is satisfied) that
    \begin{equation*}
        I_\alpha^\mu\colon \Lambda_{L^{p}}(\rn,\mu) \to (L^{p,\infty})^{\langle p \rangle}(\rn, \nu).
    \end{equation*}
    A straightforward application of~\eqref{prel:Lorentz_spaces_Lambda_endpoint} and \eqref{ex:sawyerable_T_boundedness_lorentz_infty:power} now yields~\eqref{kristensen-korobkov-two-measures}.
\end{proof}

We will round off this section with the proof of Theorem~\ref{thm:R_boundedness_lorentz_infty}. While being somewhat lengthy, we believe that it provides a valuable insight into the subject, and at the same time reveals interesting connections with fine properties of certain scales of function spaces and the interpolation theory (see Remark~\ref{rem:R_boundedness_lorentz_infty_remarks} for more detail).
\begin{proof}[Proof of Theorem~\ref{thm:R_boundedness_lorentz_infty}]
For the sake of brevity, we will write $\|\cdot\|_{r_j,s_j}$ and $\|\cdot\|_{s_j}$, $j=1,2$, instead of $\|\cdot\|_{L^{r_j,s_j}(0, \infty)}$ and $\|\cdot\|_{L^{s_j}(0,\infty)}$, respectively. The inequality~\eqref{E:R-on-lorentz} reads as
    \begin{equation}\label{E:R-on-lorentz-norms}
        \left\|\left(
        t^{\ptwo -1}(|g|^p)^{**}(t^{\ptwo })
        \right)^{\frac{1}{p}}
        \right\|_{r_2,s_2}
        \lesssim
        \|g\|_{r_1,s_1} \quad \text{for every $g\in\M(0, \infty)$}.
    \end{equation}
    On substituting $h=|g|^p$ and using the definition of Lorentz (quasi)norm on the right-hand side, we find that~\eqref{E:R-on-lorentz-norms} holds if and only if
    \begin{equation}\label{E:norm-formulation-1}
        \left\|\left(
        t^{\ptwo -1}h^{**}(t^{\ptwo })
        \right)^{\frac{1}{p}}
        \right\|_{r_2,s_2}
        \lesssim
        \left\|t^{\frac1{r_1}-\frac1{s_1}}h^*(t)^{\frac{1}{p}}\right\|_{s_1} \quad \text{for every $h\in\M(0, \infty)$.}
    \end{equation}
	Now, assume for the moment that we know that
  \begin{align}
      \left\|\left(
        t^{\ptwo -1}h^{**}(t^{\ptwo })
        \right)^{\frac{1}{p}}
        \right\|_{r_2,s_2} &\approx \left\|
        \sup_{t\leq \tau < \infty}\big(\tau^{\ptwo -1}h^{**}(\tau^{\ptwo })
        \big)^{\frac{1}{p}}
        \right\|_{r_2,s_2} \label{E:norm-formulation-equivalent_with_sup}\\
        \intertext{and}
        \left\|
        t^{\frac{1}{\ptwo r_2}-\frac{1}{s_2}}
        \sup_{t\le y<\infty}
        y^{\frac{1}{\pone }}h^{**}(y)
        ^{\frac{1}{p}}
        \right\|_{s_2} &\approx \left\|
        t^{\frac{1}{\ptwo r_2} + \frac{1}{\pone } -\frac{1}{s_2}} h^{**}(t)
        ^{\frac{1}{p}}
        \right\|_{s_2} \label{E:norm-formulation-getting_rid_of_sup}
  \end{align}
  for every $h\in\M(0, \infty)$. Using the definition of the Lorentz (quasi)norm and the change of variables $y=\tau^{\ptwo }$ inside the supremum, we see that
  \begin{align*}
      \left\|
        \sup_{t\leq \tau < \infty}\big(\tau^{\ptwo -1}h^{**}(\tau^{\ptwo })
        \big)^{\frac{1}{p}}
        \right\|_{r_2,s_2} &= \left\|
        t^{\frac{1}{r_2}-\frac{1}{s_2}}
        \sup_{t\le\tau<\infty}
        \left(
        \tau^{\ptwo -1}h^{**}(\tau^{\ptwo })
        \right)^{\frac{1}{p}}
        \right\|_{s_2} \\
        &=
        \left\|
        t^{\frac{1}{r_2}-\frac{1}{s_2}}
        \sup_{t^{\ptwo }\le y<\infty}
        \big(
        y^{1-\frac{1}{\ptwo }}h^{**}(y)
        \big)^{\frac{1}{p}}
        \right\|_{s_2}
  \end{align*}
for every $h\in\M(0, \infty)$. Moreover, on calling~\eqref{def:sawyerable_op:def_of_p} into play and using the change of variables $\tau=t^{\ptwo }$ (and renaming $\tau$ to $t$ again), we have
    \begin{align*}
        \left\|
        t^{\frac{1}{r_2}-\frac{1}{s_2}}
        \sup_{t^{\ptwo }\le y<\infty}
        \big(
        y^{1-\frac{1}{\ptwo }}h^{**}(y)
        \big)^{\frac{1}{p}}
        \right\|_{s_2} &= \left\|
        t^{\frac{1}{r_2}-\frac{1}{s_2}}
        \sup_{t^{\ptwo }\le y<\infty}
        y^{\frac{1}{\pone }}h^{**}(y)
        ^{\frac{1}{p}}
        \right\|_{s_2} \\
        &\approx\left\|
        t^{\frac{1}{\ptwo r_2}-\frac{1}{s_2}}
        \sup_{t\le y<\infty}
        y^{\frac{1}{\pone }}h^{**}(y)
        ^{\frac{1}{p}}
        \right\|_{s_2}
    \end{align*}
		for every $h\in\M(0, \infty)$. Therefore, putting all these things together and assuming that both \eqref{E:norm-formulation-equivalent_with_sup} and \eqref{E:norm-formulation-getting_rid_of_sup} are true, we have shown that \eqref{E:norm-formulation-1} holds if and only if
  \begin{equation}
  \label{E:norm-formulation-2} 
      \left\|
        t^{\frac{1}{\ptwo r_2} + \frac{1}{\pone } -\frac{1}{s_2}} h^{**}(t)
        ^{\frac{1}{p}}
        \right\|_{s_2} \lesssim \left\|t^{\frac1{r_1}-\frac1{s_1}}h^*(t)^{\frac{1}{p}}\right\|_{s_1} \quad \text{for every $h\in\M(0, \infty)$.}
  \end{equation}
 Since the validity of \eqref{E:norm-formulation-1} is equivalent to that of \eqref{E:R-on-lorentz-norms}, and so also to the desired boundedness~\eqref{E:R-on-lorentz}, we arrive at
 \begin{equation}\label{E:norm-formulation-3}
     \text{$R_{p,\pone}\colon 
            L^{r_1,s_1}(0,\infty)\to L^{r_2,s_2}(0, \infty)$ is bounded if and only if \eqref{E:norm-formulation-2} is true},
 \end{equation}
 provided that \eqref{E:norm-formulation-equivalent_with_sup} and \eqref{E:norm-formulation-getting_rid_of_sup} are valid.

 We now prove \eqref{E:norm-formulation-equivalent_with_sup} and \eqref{E:norm-formulation-getting_rid_of_sup}, starting with the former. On the one hand, using the lattice property of the Lorentz (quasi)norm,~\eqref{E:lattice},
    and observing that the function
    \begin{equation*}
        t\mapsto \sup_{t\le\tau<\infty}
        \left(
        \tau^{\ptwo -1}h^{**}(\tau^{\ptwo })
        \right)^{\frac{1}{p}}
    \end{equation*}
    is nonincreasing on $(0,\infty)$ and clearly majorizes the function
    \begin{equation*}
        t\mapsto \left(
        t^{\ptwo -1}h^{**}(t^{\ptwo })
        \right)^{\frac{1}{p}},
    \end{equation*}
    we see that
		\begin{equation}\label{E:norm-formulation-equivalent_with_sup_eq1}
			\left\|
        \left(
        t^{\ptwo -1}h^{**}(t^{\ptwo })
        \right)^{\frac{1}{p}}
        \right\|_{r_2,s_2} \leq \left\|
        \sup_{t\le\tau<\infty}
        \left(
        \tau^{\ptwo -1}h^{**}(\tau^{\ptwo })
        \right)^{\frac{1}{p}}
        \right\|_{r_2, s_2}
		\end{equation}
		for every $h\in\M(0, \infty)$. On the other hand, by \cite[Lemma~3.1(ii)]{GP-Indiana:09} (with $\beta = 0$ and $\alpha = (\ptwo -1)/p$ in their notation), we have
  \begin{equation*}
      \int_0^t \sup_{s\le\tau<\infty}
        \left(
        \tau^{\ptwo -1}h^{**}(\tau^{\ptwo })
        \right)^{\frac{1}{p}} \ds \lesssim \int_0^t
        \left(\big(\tau^{\ptwo -1}h^{**}(\tau^{\ptwo }) \big)^{\frac{1}{p}} \right)^*(s) \ds \quad \text{for every $t\in(0,\infty)$}
  \end{equation*}
  and every $h\in\M(0, \infty)$. Consequently,
  \begin{equation}\label{E:norm-formulation-equivalent_with_sup_eq2}
      \left\|
        \sup_{t\le\tau<\infty}
        \left(
        \tau^{\ptwo -1}h^{**}(\tau^{\ptwo })
        \right)^{\frac{1}{p}}\right\|_{r_2,s_2} \lesssim \left\|
        \big(t^{\ptwo -1}h^{**}(t^{\ptwo }) \big)^{\frac{1}{p}}\right\|_{r_2,s_2}
  \end{equation}
  for every $h\in\M(0, \infty)$ by virtue of the Hardy--Littlewood--P\'olya principle (see~\cite[Chapter~2, Theorem~4.6]{BS}). Hence, combining \eqref{E:norm-formulation-equivalent_with_sup_eq1} and \eqref{E:norm-formulation-equivalent_with_sup_eq2}, we obtain \eqref{E:norm-formulation-equivalent_with_sup}. 
  
  Next, we turn our attention to \eqref{E:norm-formulation-getting_rid_of_sup}. We will distinguish between $s_2<\infty$ and $s_2=\infty$, the latter case being considerably simpler. When $s_2 = \infty$, we simply interchange the suprema to obtain
  \begin{align*}
      \left\|
        t^{\frac{1}{\ptwo r_2}-\frac{1}{s_2}}
        \sup_{t\le y<\infty}
        y^{\frac{1}{\pone }}h^{**}(y)
        ^{\frac{1}{p}}
        \right\|_{s_2} &= \sup_{t\in(0,\infty)}t^{\frac{1}{\ptwo r_2}}
        \sup_{t\le y<\infty}
        y^{\frac{1}{\pone }}h^{**}(y)
        ^{\frac{1}{p}} \\
        &=\sup_{y\in(0,\infty)}y^{\frac{1}{\pone }}h^{**}(y)
        ^{\frac{1}{p}} \sup_{0<t\leq y}t^{\frac{1}{\ptwo r_2}} \\
        &=\sup_{y\in(0,\infty)}y^{\frac{1}{\ptwo r_2} + \frac{1}{\pone }}h^{**}(y)
        ^{\frac{1}{p}} \\
        &=\|t^{\frac{1}{\ptwo r_2} + \frac{1}{\pone } - \frac1{s_2}}h^{**}(t)
        ^{\frac{1}{p}}\|_{s_2}
  \end{align*}
  for every $h\in\M(0, \infty)$. Hence \eqref{E:norm-formulation-getting_rid_of_sup} is true when $s_2 = \infty$ (in fact, with equality). 
  
  Assume now that $s_2<\infty$. We clearly have
  \begin{equation}\label{E:norm-formulation-getting_rid_of_sup_eq1}
      \left\|
        t^{\frac{1}{\ptwo r_2} + \frac{1}{\pone } -\frac{1}{s_2}} h^{**}(t)
        ^{\frac{1}{p}}
        \right\|_{s_2}\leq \left\|
        t^{\frac{1}{\ptwo r_2}-\frac{1}{s_2}}
        \sup_{t\le y<\infty}
        y^{\frac{1}{\pone }}h^{**}(y)
        ^{\frac{1}{p}}
        \right\|_{s_2}
  \end{equation}
   for every $h\in\M(0, \infty)$, and so we only need to prove the converse inequality. The desired inequality follows from~\cite[Theorem~3.2(i)]{Gog:06}. We sketch the way in which their theorem is used for the reader's convenience. Fix $h\in\M(0,\infty)$, and denote
    \begin{equation*}
        \varphi(y)=
        h^{**}(y)
        ^{\frac1{p}}
        \quad\text{for $y\in(0,\infty)$}
    \end{equation*}
    and
    \begin{equation*}
       u(y)=
        y^{\frac1{\pone }},
        \quad
       v(y)=
        y^{\frac{s_2}{\ptwo r_2} + \frac{s_2}{\pone }
        -1},
        \quad
       w(y)=
        y^{\frac{s_2}{\ptwo r_2}-1}        
        \quad\text{for $y\in(0,\infty)$.}
    \end{equation*}
    An easy calculation shows that
    \begin{equation*}
        \left(
        \int_{0}^{x}
        \left[\sup_{t\le\tau\le x}u(\tau)\right]
        ^{s_2}w(t)\dt\right)^{\frac{1}{s_2}}
        \lesssim
        \left(
        \int_{0}^{x}
        v(t)\dt\right)^{\frac{1}{s_2}}
        \quad\text{for every $x\in(0,\infty)$.}
    \end{equation*}
    Therefore, applying~\cite[Theorem~3.2(i)]{Gog:06} to (in their notation) $p=q=s_2$ and $u,v,w$ as above, and noticing that $\varphi$ is obviously nonincreasing on $(0,\infty)$, we obtain
    \begin{equation}\label{E:norm-formulation-getting_rid_of_sup_eq2}
\left\|
        t^{\frac{1}{\ptwo r_2}-\frac{1}{s_2}}
        \sup_{t\le y<\infty}
        y^{\frac{1}{\pone }}h^{**}(y)
        ^{\frac{1}{p}}
        \right\|_{s_2} \lesssim  \left\|
        t^{\frac{1}{\ptwo r_2} + \frac{1}{\pone } -\frac{1}{s_2}} h^{**}(t)
        ^{\frac{1}{p}}
        \right\|_{s_2},
    \end{equation}
    in which the multiplicative constant does not depend on $h$. Finally, combining \eqref{E:norm-formulation-getting_rid_of_sup_eq1} and \eqref{E:norm-formulation-getting_rid_of_sup_eq2}, we obtain \eqref{E:norm-formulation-getting_rid_of_sup} even when $s_2<\infty$.

    The above analysis shows that the inequality~\eqref{E:R-on-lorentz} holds if and only if~\eqref{E:norm-formulation-2} is satisfied. It thus only remains to verify that the validity of~\eqref{E:norm-formulation-2} is equivalent to that of one of the conditions~\eqref{E:R-lorentz-parameters-i}--\eqref{E:R-lorentz-parameters-iii}. We shall split the proof of this fact into four parts in dependence on finiteness or non-finiteness of the parameters $s_1$ and $s_2$, since the techniques are different for each of these cases. We will need some knowledge from the theory of weighted inequalities on the cone of monotone functions.

    (a) Assume that $s_1<\infty$ and $s_2<\infty$. Then,~\eqref{E:norm-formulation-2} (after raising it to $p$) reads as
    \begin{equation}\label{E:R-on-lorentz-3}
        \left(\int_{0}^{\infty}
         h^{**}(t)^{\frac{s_2}{p}}
        t^{\frac{s_2}{\pone }+\frac{s_2}{\ptwo r_2}-1}\dt
        \right)^{\frac{p}{s_2}}
            \lesssim
        \left(\int_{0}^{\infty}
        h^*(t)^{\frac{s_1}{p}}t^{\frac{s_1}{r_1}-1}\dt
        \right)^{\frac{p}{s_1}}
    \end{equation}
		for every $h\in\M(0, \infty)$. A simple analysis of \cite[Theorem~10.3.12, (iii)--(vi)]{Pic:13} reveals that~\eqref{E:R-on-lorentz-3} cannot hold unless $s_1\le s_2$. To this end, one can easily observe that the condition~\cite[(10.3.13)]{Pic:13} cannot be satisfied, because it requires an integral of a power function over $(0, \infty)$ to be finite. So, we have to have
    \begin{equation}\label{E:s-2-and-s-2}
        s_1\le s_2
    \end{equation}
		for \eqref{E:R-on-lorentz-3} to possibly hold. A characterization of the validity of \eqref{E:R-on-lorentz-3} under the restriction \eqref{E:s-2-and-s-2} is provided by~\cite[Theorem~10.3.12, (i)--(ii)]{Pic:13}. It reads as: either $s_1>p$,
    \begin{equation}\label{E:lorentz-condition-1}
        \sup_{t\in(0,\infty)}
        \frac{
        \left(
        \int_{0}^{t}\tau^{\frac{s_2}{\pone }+\frac{s_2}{\ptwo r_2}-1}\dtau
        \right)^{\frac{p}{s_2}}
        }
        {
        \left(
        \int_{0}^{t}\tau^{\frac{s_1}{r_1}-1}\dtau\right)^{\frac{p}{s_1}}}
        <\infty,
    \end{equation}
    and
    \begin{equation*}
        \sup_{t\in(0,\infty)}
        \left(
        \int_{t}^{\infty}
        \tau^{\frac{s_2}{\pone }+\frac{s_2}{\ptwo r_2}-\frac{s_2}{p}-1}\dtau
        \right)^{\frac{p}{s_2}}
        \left(
        \int_{0}^{t}
        \tau^{\frac{s_1}{r_1}-1+\frac{s_1}{s_1-p}-\frac{s_1}{r_1}\frac{s_1}{s_1-p}}
        \dtau\right)^{\frac{s_1-p}{s_1}}
        <\infty,
    \end{equation*}
    or $s_1\le p$,~\eqref{E:lorentz-condition-1} is satisfied, and
    \begin{equation*}
        \sup_{t\in(0,\infty)}
        t\left(
        \int_{t}^{\infty}
        \tau^{\frac{s_2}{\pone }+\frac{s_2}{\ptwo r_2}-\frac{s_2}{p}-1}\dtau
        \right)^{\frac{p}{s_2}}
        \left(
        \int_{0}^{t}
        \tau^{\frac{s_1}{r_1}-1}
        \dtau\right)^{-\frac{p}{s_1}}
        <\infty.
    \end{equation*}
    Calculation shows that all these conditions are equivalent to
    \begin{align}
        &r_1>p,\quad                       r_2>p,\quad\text{and}
        \quad\frac{1}{\pone }
        +\frac{1}{\ptwo r_2}
        =\frac{1}{r_1}.
        \label{E:R-interresult-i}
    \end{align}
    We next observe that $r_2>p$ is in fact superfluous in~\eqref{E:R-interresult-i} as it follows from the other two relations. 
    Indeed,~the combination of $r_1>p$ with 
    $\frac{1}{\pone }
    +\frac{1}{\ptwo r_2}
    =\frac{1}{r_1}$
    directly enforces
   \begin{equation}\label{E:lorentz-parameters-inequality}
        \frac{1}{\pone }+\frac{1}{\ptwo r_2}<\frac{1}{p}.
    \end{equation}
    By~\eqref{def:sawyerable_op:def_of_p},~\eqref{E:lorentz-parameters-inequality} is equivalent to 
    \begin{equation*}
        \frac{1}{\pone }+\frac{1}{\ptwo r_2}<\frac{\ptwo '}{\pone },
    \end{equation*}
    whence, using~\eqref{def:sawyerable_op:def_of_p} once again, one gets
    \begin{equation*}
       \frac{1}{r_2}<\frac{\ptwo (\ptwo '-1)}{\pone }=\frac{1}{p},
    \end{equation*}
    and the claim follows. So,~\eqref{E:R-interresult-i} is equivalent to
        \begin{align}
        &r_1>p\quad\text{and}
        \quad\frac{1}{\pone }
        +\frac{1}{\ptwo r_2}
        =\frac{1}{r_1}.
        \label{E:R-interresult-ii}
    \end{align}
    Since \eqref{E:R-interresult-ii} immediately implies that $r_1<\pone $, we conclude that, in the case~(a),~\eqref{E:norm-formulation-2}, hence~\eqref{E:norm-formulation-3}, holds if and only if~\eqref{E:R-lorentz-parameters-i} does.

    (b) Assume that $s_1<\infty$ and $s_2=\infty$. We claim that then~\eqref{E:norm-formulation-2} holds if and only if
    \begin{equation}
    \label{E:balance-condition}
       \frac{1}{\pone }
        +\frac{1}{\ptwo r_2}
        =\frac{1}{r_1}
    \end{equation}
    and 
    \begin{equation*}
       \begin{cases}
           \text{either} &r_1\ge p\text{ and $s_1\le p$}
                \\
           \text{or} &r_1> p\text{ and $s_1> p$.} 
       \end{cases}
    \end{equation*}
    To verify this claim, note that~\eqref{E:norm-formulation-2}, raised to $p$, turns into 
    \begin{equation}
    \label{E:norm-formulation-12}
        \sup_{y\in(0,\infty)}
        h^{**}(y)
        y^{\frac{p}{\pone }+\frac{p}{\ptwo r_2}}
        \lesssim
        \left(\int_{0}^{\infty}h^*(t)^{\frac{s_1}{p}}t^{\frac{s_1}{r_1}-1}\ds\right)^{\frac{p}{s_1}}
    \end{equation}
		for every $h\in\M(0, \infty)$. To characterize parameters for which~\eqref{E:norm-formulation-12} holds, we will exploit~\cite[Theorem~3.15]{Gog:13}, which (translated to our notation) states that this inequality holds if and only if either $s_1\le p$ and 
    \begin{equation*}
        \sup_{t\in(0,\infty)}\sup_{\tau\in(0,\infty)}\min\{t,\tau\}\tau^{\frac{p}{\pone }+\frac{p}{\ptwo r_2}-1}t^{-\frac{p}{r_1}}<\infty,
    \end{equation*}
    or $s_1> p$ and 
    \begin{equation}\label{E:GS-condition-ii}
        \sup_{t\in(0,\infty)}
        t^{\frac{p}{\pone }+\frac{p}{\ptwo r_2}-1}
        \left(\int_{0}^{t}
        \left(\int_{\tau}^{t}
        s^{-\frac{s_1}{r_1}}\ds
        \right)^{\frac{s_1}{s_1-p}}
       \tau^{\frac{s_1}{r_1}-1}\dtau
        \right)^{\frac{s_1-p}{s_1}}<\infty.
    \end{equation}
    Calculation shows that in the first case, that is, when $s_1\le p$, the necessary and sufficient condition for~\eqref{E:norm-formulation-12} is $p\le r_1$ 
    and~\eqref{E:balance-condition}. In the second case, that is, when $s_1>p$, the analysis is more complicated because of the kernel occurring in the condition. Notice that, owing to the standard techniques, the term 
    $\int_{\tau}^{t}
    s^{-\frac{s_1}{r_1}}\ds$ 
    can be equivalently replaced in~\eqref{E:GS-condition-ii} by
    \begin{equation*}
        \begin{cases}
            t^{1-\frac{s_1}{r_1}}
            &\text{if $s_1<r_1$}
                \\
           \log\frac{t}{\tau}
            &\text{if $s_1=r_1$}
                \\
           \tau^{1-\frac{s_1}{r_1}}
            &\text{if $s_1>r_1$}.
        \end{cases}
    \end{equation*}
    Some more computation shows that if $s_1<r_1$, then the desired inequality holds if and only if~\eqref{E:balance-condition} holds. In the second case, when $s_1=r_1$, the condition reads as
    \begin{equation*}
        \sup_{t\in(0,\infty)}
        t^{\frac{p}{\pone }+\frac{p}{\ptwo r_2}-1}
        \left(\int_{0}^{t}
        \left(\log\frac{t}{\tau}
        \right)^{\frac{s_1}{s_1-p}}
        \dtau
        \right)^{\frac{s_1-p}{s_1}}<\infty.
    \end{equation*}
    Homogeneizing the integral by changing variables $\tau=ty$, we obtain
    \begin{equation*}
    \begin{split}
        \sup_{t\in(0,\infty)}
        &t^{\frac{p}{\pone }+\frac{p}{\ptwo r_2}-1}
        \left(\int_{0}^{t}
        \left(\log\frac{t}{\tau}
        \right)^{\frac{s_1}{s_1-p}}
        \dtau
        \right)^{\frac{s_1-p}{s_1}}
            \\
        &=\sup_{t\in(0,\infty)}
        t^{\frac{p}{\pone }+\frac{p}{\ptwo r_2}-\frac{p}{s_1}}
        \left(\int_{0}^{1}
        \left(\log\frac{1}{y}
        \right)^{\frac{s_1}{s_1-p}}
        \dy
        \right)^{\frac{s_1-p}{s_1}}.
    \end{split}
    \end{equation*}
    Since the last integral is convergent, we see that this is, once again, equivalent to~\eqref{E:balance-condition}. Finally, if $s_1>r_1$, straightforward calculation shows that \eqref{E:GS-condition-ii} holds if and only if $p<r_1$ and~\eqref{E:balance-condition} holds. This establishes the assertion in the case (b).
    
       (c) Assume that $s_1=\infty$ and $s_2<\infty$. We claim that, in this case, the inequality~\eqref{E:norm-formulation-2} is impossible. Indeed, the choice $h(t) = h^*(t) = t^{-\frac{p}{r_1}}$, $t\in(0, \infty)$, makes the right-hand side of~\eqref{E:norm-formulation-2} finite, while making the left-hand side infinite regardless of the choice of the other parameters which have not been fixed. This establishes the claim.

    (d) Assume that $s_1=\infty$ and $s_2=\infty$. 
    Then,~\eqref{E:norm-formulation-2} reads as   
    \begin{equation}
    \label{E:norm-formulation-14}
        \sup_{t\in(0,\infty)}
        h^{**}(t)^{\frac{1}{p}}
        t^{\frac{1}{\pone }+\frac{1}{\ptwo r_2}}
        \lesssim
        \sup_{t\in(0,\infty)}
        t^{\frac{1}{r_1}}
        h^*(t)^{\frac{1}{p}}
    \end{equation}
		for every $h\in\M(0, \infty)$. We claim that~\eqref{E:norm-formulation-14} holds if and only if~\eqref{E:R-interresult-ii} does. Indeed, to verify the `only if' part,
    we assume that~\eqref{E:norm-formulation-14} holds and test it first on the single function $h(t)=h^*(t)=t^{-\frac{p}{r_1}}$, $t\in(0, \infty)$. This immediately shows that $r_1>p$ is necessary for~\eqref{E:norm-formulation-14} because otherwise $h^*$ is not integrable near zero, whence $h^{**}\equiv\infty$ on $(0,\infty)$, which makes the left-hand side of~\eqref{E:norm-formulation-14} infinite and  the right-hand side finite. As the next step, we test~\eqref{E:norm-formulation-14} on~$h=h_a=h^*_a=\chi_{(0,a)}$ for any fixed $a\in(0,\infty)$. We get 
    \begin{equation*}
        \sup_{t\in(0,\infty)}
        \left(\chi_{(0,a)}(t)+\frac{a}{t}\chi_{[a,\infty)}(t)\right)^{\frac{1}{p}}
        t^{\frac{1}{\pone }+\frac{1}{\ptwo r_2}}
        \lesssim
        a^{\frac{1}{r_1}},
    \end{equation*}
		for every $a\in(0, \infty)$, which, in turn, enforces
    \begin{equation}\label{E:norm-formulation-16a}
        a^{\frac{1}{\pone }+\frac{1}{\ptwo r_2}}
        \lesssim
        a^{\frac{1}{r_1}} \quad \text{for every $a\in(0, \infty)$}.
    \end{equation}
    A simple inspection shows that~\eqref{E:norm-formulation-16a} implies $\frac{1}{\pone }
        +\frac{1}{\ptwo r_2}
        =\frac{1}{r_1}$. Altogether, we see that \eqref{E:R-interresult-ii} is necessary for \eqref{E:norm-formulation-14}.
     
    Conversely, to establish the `if' part, let $h\in\M(0,\infty)$ be such that the expression on the right-hand side of~\eqref{E:norm-formulation-14} is finite, and denote
    \begin{equation}\label{E:M}
        M=\sup_{t\in(0,\infty)}
        t^{\frac{p}{r_1}}
        h^*(t) < \infty.
    \end{equation}
    Then one has
    \begin{equation*}
        h^*(t) \le Mt^{-\frac{p}{r_1}}
        \quad\text{for every $t\in(0,\infty)$.}
    \end{equation*}
    Integrating and using the fact that $p<r_1$, we get, 
    \begin{equation*}
        h^{**}(y) \le \frac{r_1}{r_1-p}My^{-\frac{p}{r_1}}
        \quad\text{for every $y\in(0,\infty)$.}
    \end{equation*}
    Consequently, owing to~\eqref{E:R-interresult-ii}, one has
     \begin{align}
        \sup_{y\in(0,\infty)}
        h^{**}(y)^{\frac{1}{p}}
        y^{\frac{1}{\pone }+\frac{1}{\ptwo r_2}}
        &\leq \Big( \frac{r_1}{r_1-p} M \Big)^\frac1{p} \sup_{y\in(0,\infty)}
        y^{\frac{1}{\pone }+\frac{1}{r_2\ptwo }-\frac{1}{r_1}} \nonumber\\
        &= \Big( \frac{r_1}{r_1-p} M \Big)^\frac1{p}.\label{E:M-up}
    \end{align}
    Hence, \eqref{E:norm-formulation-14} follows from the combination of~\eqref{E:M} and~\eqref{E:M-up}. This establishes the assertion in the case (d) and completes the 
    proof of the theorem.
\end{proof}

\begin{remark}\label{rem:R_boundedness_lorentz_infty_remarks}
The boundedness of $R_{p,\pone}\colon L^{r_1,s_1}(0,\infty)\to L^{r_2,s_2}(0, \infty)$ with the parameters satisfying \eqref{E:R-lorentz-parameters-i} can alternatively be derived from combining Proposition~\ref{prop:endpoint_estimates_for_R} with the Marcinkiewicz interpolation theorem (in its version for Lorentz spaces due to A.\,P.\ Calder\'on, e.g., see~\cite[Chapter~4, Theorem~4.13]{BS}). However, the benefit of the different proof given above is twofold (apart from being self-contained, to some extent).  First, it additionally shows the necessity of the restriction $s_1\leq s_2$. Second, it suggests a way in which one could obtain boundedness of $R_{p,\pone}$ between more general function spaces\textemdash in particular, between function spaces that are instances of the so-called Lorentz Lambda spaces introduced in~\cite{L:51} (see also~\cite[Chapter~10]{Pic:13} for more information).
\end{remark}

\section*{Acknowledgments}
We wish to thank the referees for carefully checking our manuscript and for their valuable comments.
Z.~Mihula and L.~Pick are supported by grant no.~23-04720S of the Czech Science Foundation.  
D. Spector is supported by the National Science and Technology Council of Taiwan under research grant numbers 110-2115-M-003-020-MY3/113-2115-M-003-017-MY3 and the Taiwan Ministry of Education under the Yushan Fellow Program.

\begin{bibdiv}

\begin{biblist}

\bib{Adams:1974}{article}{
   author={Adams, D. R.},
   title={On the existence of capacitary strong type estimates in $R^{n}$},
   journal={Ark. Mat.},
   volume={14},
   date={1976},
   number={1},
   pages={125--140},
   issn={0004-2080},
   review={\MR{417774}},
   doi={10.1007/BF02385830},
}

\bib{AdamsChoquet}{article}{
   author={Adams, David R.},
   title={A note on Choquet integrals with respect to Hausdorff capacity},
   conference={
      title={Function spaces and applications},
      address={Lund},
      date={1986},
   },
   book={
      series={Lecture Notes in Math.},
      volume={1302},
      publisher={Springer, Berlin},
   },
   date={1988},
   pages={115--124},
   review={\MR{942261}},
   doi={10.1007/BFb0078867},
}

\bib{Ada:98}{article}{
   author={Adams, David R.},
   title={Choquet integrals in potential theory},
   journal={Publ. Mat.},
   volume={42},
   date={1998},
   number={1},
   pages={3--66},
   issn={0214-1493},
   review={\MR{1628134}},
   doi={10.5565/PUBLMAT\_42198\_01},
}

\bib{AH}{book}{
   author={Adams, David R.},
   author={Hedberg, Lars Inge},
   title={Function spaces and potential theory},
   series={Grundlehren der mathematischen Wissenschaften},
   volume={314},
   publisher={Springer-Verlag, Berlin},
   date={1996},
    pages={xii+366},
    isbn={3-540-57060-8},
    review={\MR{1411441}
}
}

\bib{Bae:22}{article}{
   author={Baena-Miret, Sergi},
   author={Gogatishvili, Amiran},
   author={Mihula, Zden\v ek},
   author={Pick, Lubo\v s},
   title={Reduction principle for Gaussian $K$-inequality},
   journal={J. Math. Anal. Appl.},
   volume={516},
   date={2022},
   number={2},
   pages={Paper No. 126522, 23},
   issn={0022-247X},
   review={\MR{4459008}},
   doi={10.1016/j.jmaa.2022.126522},
}

\bib{BR}{article}{
   author={Bennett, Colin},
   author={Rudnick, Karl},
   title={On Lorentz-Zygmund spaces},
   journal={Dissertationes Math. (Rozprawy Mat.)},
   volume={175},
   date={1980},
   pages={67},
   issn={0012-3862},
   review={\MR{576995}},
}

\bib{BS}{book}{
   author={Bennett, Colin},
   author={Sharpley, Robert},
   title={Interpolation of operators},
   series={Pure and Applied Mathematics},
   volume={129},
   publisher={Academic Press, Inc., Boston, MA},
   date={1988},
   pages={xiv+469},
   isbn={0-12-088730-4},
   review={\MR{0928802}},
}

\bib{BL}{book}{
   author={Bergh, J\"{o}ran},
   author={L\"{o}fstr\"{o}m, J\"{o}rgen},
   title={Interpolation spaces. An introduction},
   series={Grundlehren der Mathematischen Wissenschaften},
   volume={No. 223},
   publisher={Springer-Verlag, Berlin-New York},
   date={1976},
   pages={x+207},
   review={\MR{0482275}},
}

\bib{BK:91}{book}{
   author={Brudny\u{\i}, Yu. A.},
   author={Krugljak, N. Ya.},
   title={Interpolation functors and interpolation spaces. Vol. I},
   series={North-Holland Mathematical Library},
   volume={47},
   note={Translated from the Russian by Natalie Wadhwa;
   With a preface by Jaak Peetre},
   publisher={North-Holland Publishing Co., Amsterdam},
   date={1991},
   pages={xvi+718},
   isbn={0-444-88001-1},
   review={\MR{1107298}},
}

\bib{COS}{article}{
   author={Chen, You-Wei Benson},
   author={Ooi, Keng Hao},
   author={Spector, Daniel},
   title={Capacitary maximal inequalities and applications},
   journal={J. Funct. Anal.},
   volume={286},
   date={2024},
   number={12},
   pages={Paper No. 110396},
   issn={0022-1236},
   review={\MR{4729407}},
   doi={10.1016/j.jfa.2024.110396},
}

\bib{CS}{article}{
   author={Chen, You-Wei},
   author={Spector, Daniel},
   title={On functions of bounded $\beta$-dimensional mean oscillation},
   journal={Adv. Calc. Var.},
   volume={17},
   date={2024},
   number={3},
   pages={975--996},
   issn={1864-8258},
   review={\MR{4767358}},
   doi={10.1515/acv-2022-0084},
}

\bib{Cia:20}{article}{
   author={Cianchi, Andrea},
   author={Pick, Lubo\v s},
   author={Slav\'ikov\'a, Lenka},
   title={Sobolev embeddings, rearrangement-invariant spaces and Frostman
   measures},
   language={English, with English and French summaries},
   journal={Ann. Inst. H. Poincar\'e{} C Anal. Non Lin\'eaire},
   volume={37},
   date={2020},
   number={1},
   pages={105--144},
   issn={0294-1449},
   review={\MR{4049918}},
   doi={10.1016/j.anihpc.2019.06.004},
}

\bib{Dahlberg}{article}{
   author={Dahlberg, Bj\"{o}rn E. J.},
   title={Regularity properties of Riesz potentials},
   journal={Indiana Univ. Math. J.},
   volume={28},
   date={1979},
   number={2},
   pages={257--268},
   issn={0022-2518},
   review={\MR{523103}},
   doi={10.1512/iumj.1979.28.28018},
}

\bib{Eri:24}{article}{
   author={Eriksson-Bique, Sylvester},
   title={A new Hausdorff content bound for limsup sets},
   journal={Adv. Math.},
   volume={445},
   date={2024},
   pages={Paper No. 109638, 52},
   issn={0001-8708},
   review={\MR{4729698}},
   doi={10.1016/j.aim.2024.109638},
}

\bib{Esm:22}{article}{
   author={Esmayli, Behnam},
   author={Haj\l asz, Piotr},
   title={Lipschitz mappings, metric differentiability, and factorization
   through metric trees},
   journal={J. Lond. Math. Soc. (2)},
   volume={106},
   date={2022},
   number={3},
   pages={2663--2695},
   issn={0024-6107},
   review={\MR{4498563}},
   doi={10.1112/jlms.12644},
}

\bib{Gog:06}{article}{
   author={Gogatishvili, Amiran},
   author={Opic, Bohum\'{\i}r},
   author={Pick, Lubo\v{s}},
   title={Weighted inequalities for Hardy-type operators involving suprema},
   journal={Collect. Math.},
   volume={57},
   date={2006},
   number={3},
   pages={227--255},
   issn={0010-0757},
   review={\MR{2264321}},
}

\bib{GP-Indiana:09}{article}{
   author={Gogatishvili, Amiran},
   author={Pick, Lubo\v{s}},
   title={Calder\'{o}n-type theorems for operators of nonstandard endpoint
   behaviour},
   journal={Indiana Univ. Math. J.},
   volume={58},
   date={2009},
   number={4},
   pages={1831--1851},
   issn={0022-2518},
   review={\MR{2542981}},
   doi={10.1512/iumj.2009.58.3636},
}

\bib{Gog:13}{article}{
   author={Gogatishvili, A.},
   author={Stepanov, V. D.},
   title={Reduction theorems for weighted integral inequalities on the cone
   of monotone functions},
   language={Russian, with Russian summary},
   journal={Uspekhi Mat. Nauk},
   volume={68},
   date={2013},
   number={4(412)},
   pages={3--68},
   issn={0042-1316},
   translation={
      journal={Russian Math. Surveys},
      volume={68},
      date={2013},
      number={4},
      pages={597--664},
      issn={0036-0279},
   },
   review={\MR{3154814}},
   doi={10.1070/rm2013v068n04abeh004849},
}

\bib{Grafakos}{book}{
   author={Grafakos, Loukas},
   title={Classical Fourier analysis},
   series={Graduate Texts in Mathematics},
   volume={249},
   edition={3},
   publisher={Springer, New York},
   date={2014},
   pages={xviii+638},
   isbn={978-1-4939-1193-6},
   isbn={978-1-4939-1194-3},
   review={\MR{3243734}},
   doi={10.1007/978-1-4939-1194-3},
}

\bib{Hansson}{article}{
   author={Hansson, Kurt},
   title={Imbedding theorems of Sobolev type in potential theory},
   journal={Math. Scand.},
   volume={45},
   date={1979},
   number={1},
   pages={77--102},
   issn={0025-5521},
   review={\MR{567435}},
   doi={10.7146/math.scand.a-11827},
}

\bib{Har:23}{article}{
   author={Harjulehto, Petteri},
   author={Hurri-Syrj\"anen, Ritva},
   title={On Choquet integrals and Poincar\'e-Sobolev inequalities},
   journal={J. Funct. Anal.},
   volume={284},
   date={2023},
   number={9},
   pages={Paper No. 109862, 18},
   issn={0022-1236},
   review={\MR{4545158}},
   doi={10.1016/j.jfa.2023.109862},
}

\bib{Har:23b}{article}{
   author={Harjulehto, Petteri},
   author={Hurri-Syrj\"anen, Ritva},
   title={Estimates for the variable order Riesz potential with
   applications},
   conference={
      title={Potentials and partial differential equations---the legacy of
      David R. Adams},
   },
   book={
      series={Adv. Anal. Geom.},
      volume={8},
      publisher={De Gruyter, Berlin},
   },
   isbn={978-3-11-079265-2},
   isbn={978-3-11-079272-0},
   isbn={978-3-11-079278-2},
   date={[2023] \copyright2023},
   pages={127--155},
   review={\MR{4654515}},
}

\bib{Her:24}{article}{
   author={Hern\'andez, Joan},
   title={On the $(1/2, +)$-caloric capacity of Cantor sets},
   language={English, with English and Finnish summaries},
   journal={Ann. Fenn. Math.},
   volume={49},
   date={2024},
   number={1},
   pages={211--239},
   issn={2737-0690},
   review={\MR{4722035}},
}

\bib{H:70}{article}{
   author={Holmstedt, Tord},
   title={Interpolation of quasi-normed spaces},
   journal={Math. Scand.},
   volume={26},
   date={1970},
   pages={177--199},
   issn={0025-5521},
   review={\MR{0415352}},
   doi={10.7146/math.scand.a-10976},
}

\bib{Kristensen-Korobkov}{article}{
   author={Korobkov, Mikhail V.},
   author={Kristensen, Jan},
   title={The trace theorem, the Luzin $N$- and Morse-Sard properties for
   the sharp case of Sobolev-Lorentz mappings},
   journal={J. Geom. Anal.},
   volume={28},
   date={2018},
   number={3},
   pages={2834--2856},
   issn={1050-6926},
   review={\MR{3833820}},
   doi={10.1007/s12220-017-9936-7},
}

\bib{LL:01}{book}{
   author={Lieb, Elliott H.},
   author={Loss, Michael},
   title={Analysis},
   series={Graduate Studies in Mathematics},
   volume={14},
   edition={2},
   publisher={American Mathematical Society, Providence, RI},
   date={2001},
   pages={xxii+346},
   isbn={0-8218-2783-9},
   review={\MR{1817225}},
   doi={10.1090/gsm/014},
}

\bib{L:51}{article}{
   author={Lorentz, G. G.},
   title={On the theory of spaces $\Lambda$},
   journal={Pacific J. Math.},
   volume={1},
   date={1951},
   pages={411--429},
   issn={0030-8730},
   review={\MR{0044740}},
}

\bib{Mal:12}{article}{
   author={Mal\'y, Luk\'a\v s},
   title={Calder\'on-type theorems for operators with non-standard endpoint
   behavior on Lorentz spaces},
   journal={Math. Nachr.},
   volume={285},
   date={2012},
   number={11-12},
   pages={1450--1465},
   issn={0025-584X},
   review={\MR{2959969}},
   doi={10.1002/mana.201100095},
}

\bib{MS}{article}{
   author={Mart\'{\i}nez, \'{A}ngel D.},
   author={Spector, Daniel},
   title={An improvement to the John-Nirenberg inequality for functions in
   critical Sobolev spaces},
   journal={Adv. Nonlinear Anal.},
   volume={10},
   date={2021},
   number={1},
   pages={877--894},
   issn={2191-9496},
   review={\MR{4191703}},
   doi={10.1515/anona-2020-0157},
}

\bib{Mazya1}{article}{
   author={Maz'ya, V. G.},
   title={The negative spectrum of the higher-dimensional Schr\"{o}dinger
   operator},
   language={Russian},
   journal={Dokl. Akad. Nauk SSSR},
   volume={144},
   date={1962},
   pages={721--722},
   issn={0002-3264},
   review={\MR{0138880}},
}

\bib{Mazya2}{article}{
   author={Maz'ya, V. G.},
   title={Certain integral inequalities for functions of several variables},
   language={Russian},
   conference={
      title={Problems of mathematical analysis, No. 3: Integral and
      differential operators, Differential equations (Russian)},
   },
   book={
      publisher={Izdat. Leningrad. Univ., Leningrad},
   },
   date={1972},
   pages={33--68},
   review={\MR{0344880}},
}

\bib{Nek:24}{article}{
   author={Nekvinda, Ale\v s},
   author={Pe\v sa, Dalimil},
   title={On the properties of quasi-Banach function spaces},
   journal={J. Geom. Anal.},
   volume={34},
   date={2024},
   number={8},
   pages={Paper No. 231, 29},
   issn={1050-6926},
   review={\MR{4746146}},
   doi={10.1007/s12220-024-01673-y},
}

\bib{Oneil}{article}{
   author={O'Neil, Richard},
   title={Convolution operators and $L(p,\,q)$ spaces},
   journal={Duke Math. J.},
   volume={30},
   date={1963},
   pages={129--142},
   issn={0012-7094},
   review={\MR{146673}},
}

\bib{Pic:13}{book}{
   author={Pick, Lubo\v{s}},
   author={Kufner, Alois},
   author={John, Old\v{r}ich},
   author={Fu\v{c}\'{\i}k, Svatopluk},
   title={Function spaces. Vol. 1},
   series={De Gruyter Series in Nonlinear Analysis and Applications},
   volume={14},
   edition={Second revised and extended edition},
   publisher={Walter de Gruyter \& Co., Berlin},
   date={2013},
   pages={xvi+479},
   isbn={978-3-11-025041-1},
   isbn={978-3-11-025042-8},
   review={\MR{3024912}},
}

\bib{PS1}{article}{
   author={Ponce, Augusto C.},
   author={Spector, Daniel},
   title={A boxing inequality for the fractional perimeter},
   journal={Ann. Sc. Norm. Super. Pisa Cl. Sci. (5)},
   volume={20},
   date={2020},
   number={1},
   pages={107--141},
   issn={0391-173X},
   review={\MR{4088737}},
}

\bib{PS2}{article}{
   author={Ponce, Augusto C.},
   author={Spector, Daniel},
   title={A decomposition by non-negative functions in the Sobolev space
   $W^{k,1}$},
   journal={Indiana Univ. Math. J.},
   volume={69},
   date={2020},
   number={1},
   pages={151--169},
   issn={0022-2518},
   review={\MR{4077159}},
   doi={10.1512/iumj.2020.69.8237},
}

\bib{PS3}{article}{
   author={Ponce, Augusto C.},
   author={Spector, Daniel},
   title={Some remarks on capacitary integrals and measure theory},
   conference={
      title={Potentials and partial differential equations---the legacy of
      David R. Adams},
   },
   book={
      series={Adv. Anal. Geom.},
      volume={8},
      publisher={De Gruyter, Berlin},
   },
   isbn={978-3-11-079265-2},
   isbn={978-3-11-079272-0},
   isbn={978-3-11-079278-2},
   date={[2023] \copyright 2023},
   pages={235--263},
   review={\MR{4654520}},
}

\bib{RSS}{article}{
   author={Rai\c{t}\u{a}, Bogdan},
   author={Spector, Daniel},
   author={Stolyarov, Dmitriy},
   title={A trace inequality for solenoidal charges},
   journal={Potential Anal.},
   volume={59},
   date={2023},
   number={4},
   pages={2093--2104},
   issn={0926-2601},
   review={\MR{4684387}},
   doi={10.1007/s11118-022-10008-x},
}

\bib{ST:22}{article}{
   author={Saito, Hiroki},
   author={Tanaka, Hitoshi},
   title={Dual of the Choquet spaces with general Hausdorff content},
   journal={Studia Math.},
   volume={266},
   date={2022},
   number={3},
   pages={323--335},
   issn={0039-3223},
   review={\MR{4450794}},
   doi={10.4064/sm210415-29-1},
}

\bib{Sawyer1}{article}{
   author={Sawyer, Eric T.},
   title={A characterization of a two-weight norm inequality for maximal
   operators},
   journal={Studia Math.},
   volume={75},
   date={1982},
   number={1},
   pages={1--11},
   issn={0039-3223},
   review={\MR{0676801}},
   doi={10.4064/sm-75-1-1-11},
}

\bib{Sawyer2}{article}{
   author={Sawyer, Eric T.},
   title={Weighted norm inequalities for fractional maximal operators},
   conference={
      title={1980 Seminar on Harmonic Analysis},
      address={Montreal, Que.},
      date={1980},
   },
   book={
      series={CMS Conf. Proc.},
      volume={1},
      publisher={Amer. Math. Soc., Providence, R.I.},
   },
   date={1981},
   pages={283--309},
   review={\MR{670111}},
}

\bib{S}{article}{
   author={Spector, Daniel},
   title={A noninequality for the fractional gradient},
   journal={Port. Math.},
   volume={76},
   date={2019},
   number={2},
   pages={153--168},
   issn={0032-5155},
   review={\MR{4065096}},
   doi={10.4171/pm/2031},
}
\bib{S1}{article}{
   author={Spector, Daniel},
   title={New directions in harmonic analysis on $L^1$},
   journal={Nonlinear Anal.},
   volume={192},
   date={2020},
   pages={111685, 20},
   issn={0362-546X},
   review={\MR{4034690}},
   doi={10.1016/j.na.2019.111685},
}

\bib{Stein}{book}{
   author={Stein, Elias M.},
   title={Singular integrals and differentiability properties of functions},
   series={Princeton Mathematical Series, No. 30},
   publisher={Princeton University Press, Princeton, N.J.},
   date={1970},
   pages={xiv+290},
   review={\MR{0290095}},
}

\bib{SteinWeiss}{book}{
   author={Stein, Elias M.},
   author={Weiss, Guido},
   title={Introduction to Fourier analysis on Euclidean spaces},
   series={Princeton Mathematical Series, No. 32},
   publisher={Princeton University Press, Princeton, N.J.},
   date={1971},
   pages={x+297},
   review={\MR{0304972}},
}

\bib{Tur:23}{article}{
   author={Tur\v{c}inov\'{a}, Hana},
   title={Basic functional properties of certain scale of
   rearrangement-invariant spaces},
   journal={Math. Nachr.},
   volume={296},
   date={2023},
   number={8},
   pages={3652--3675},
   issn={0025-584X},
   review={\MR{4626903}},
}

\end{biblist}

\end{bibdiv}

\end{document}